\newtheorem{theorem}{Theorem}[section]
\newtheorem{lemma}[theorem]{Lemma}
\newtheorem{corollary}[theorem]{Corollary}
\newtheorem{proposition}[theorem]{Proposition}
 \theoremstyle{definition}
 \newtheorem{definition}[theorem]{Definition}
 \newtheorem{remark}[theorem]{Remark}
 \newtheorem{example}[theorem]{Example}
\newtheorem{examples}[theorem]{Examples}
\numberwithin{equation}{section}
\newcommand {\N}{\mathbb{N}} 
\newcommand {\Z}{\mathbb{Z}} 
\newcommand {\R}{\mathbb{R}} 
\newcommand{\BB}{\mathcal{B}}
\newcommand{\CC}{\mathcal{C}}
\newcommand{\OO}{\mathcal{O}}
\newcommand{\PP}{\mathcal{P}}
\newcommand{\UU}{\mathcal{U}}
\DeclareMathOperator{\Per}{Per}
\DeclareMathOperator{\Inv}{Inv}
\DeclareMathOperator{\CA}{CA}
\DeclareMathOperator{\End}{End}
\DeclareMathOperator{\Id}{Id}
\DeclareMathOperator{\Map}{Map}
\begin{document}
\title{On surjunctive monoids}
\author{Tullio Ceccherini-Silberstein}
\address{Dipartimento di Ingegneria, Universit\`a del Sannio, C.so
Garibaldi 107, 82100 Benevento, Italy}
\email{tceccher@mat.uniroma3.it}
\author{Michel Coornaert}
\address{Institut de Recherche Math\'ematique Avanc\'ee,
UMR 7501,                                             Universit\'e  de Strasbourg et CNRS,
                                                 7 rue Ren\'e-Descartes,
                                               67000 Strasbourg, France}
\email{coornaert@math.unistra.fr}
\subjclass[2010]{20M30, 37B10, 37B15, 54E15, 68Q80}
\keywords{monoid, uniform space, cellular automaton, marked monoid, surjunctive monoid}
\begin{abstract}
A monoid $M$ is called surjunctive if every injective cellular automata with finite alphabet over $M$ is surjective.
We show that all finite monoids, all finitely generated commutative monoids, all cancellative commutative monoids, all residually finite monoids, all finitely generated linear monoids, 
and all cancellative one-sided amenable monoids are surjunctive.
We also prove that every limit of marked surjunctive   monoids is itself surjunctive.
On the other hand, we show that the bicyclic monoid and, more generally, all monoids containing a submonoid isomorphic to the bicyclic monoid are non-surjunctive.  
\end{abstract}
\date{\today}
\maketitle

\section{Introduction}

The mathematical theory of cellular automata emerged from the pioneering work of John von Neumann in the 1940s. 
Von Neumann considered cellular automata over the additive groups $\Z^d$ and was interested in finding models for self-reproducing machines.
Subsequently, the problem of characterizing surjective and reversible cellular automata attracted considerable attention in particular because of its importance  
  when cellular automata are used  for  modeling time-evolving systems  in thermodynamics and many other branches of natural sciences.
A major breakthrough in that direction was the proof in 1963 by Moore~\cite{moore} and Myhill~\cite{myhill} of the so-called Garden of Eden theorem. 
The Garden of Eden theorem states that a cellular automaton with finite alphabet over $\Z^d$ is surjective if and only if it admits no mutually erasable patterns.
The absence of mutually erasable patterns is a weak form of injectivity.
Therefore, the Garden of Eden theorem implies in particular that every injective  cellular automaton with finite alphabet over $\Z^d$ is surjective. 
In other words, using a terminology coined by Gottschalk in~\cite{gottschalk}, 
the groups $\Z^d$ are \emph{surjunctive}. 
  From the 1960s,
it was progressively realized that it could be interesting to consider cellular automata
over groups other than the finitely generated free-abelian groups $\Z^d$
and also that the theory of cellular automata has strong connections with symbolic dynamics.  
 The question whether every group is surjunctive was raised by Gottschalk~\cite{gottschalk}.
Although this question, which is now known as the \emph{Gottschalk conjecture},
  remains open  in its full generality, the answer has been shown to be affirmative for a large class of groups.
Firstly, Lawson (see~\cite{gottschalk}) proved that every residually finite group is surjunctive.
Then,   it was shown in~\cite{ceccherini} that the Garden of Eden theorem remains true for all amenable groups, so that every amenable group is surjunctive.
Finally, Gromov~\cite{gromov-esav} and Weiss~\cite{weiss-sgds} introduced the class of sofic groups, a very large class of groups containing in particular all residually finite groups and all amenable groups, and proved that every sofic group is surjunctive.
It turns out that all groups that are known to be surjunctive are in fact sofic.
Actually, the question of the existence of a non-sofic group remains also  open up to now.
  \par
Our goal in this paper is to investigate the notion of surjunctivity for monoids, i.e., sets with an associative binary operation and an identity element.
In contrast with the group case, it is not difficult  to give examples of non-surjunctive monoids:
every monoid containing an element that is invertible from one side but not from the other
  is non-surjunctive.    
This may be rephrased by saying that  every monoid containing a submonoid isomorphic to the bicyclic monoid is non-surjunctive
(cf.~Proposition~\ref{p:charact-bicyclic-supmonoid} and Theorem~\ref{t:contains-bicyclic-is-non-surj}).
On the other hand, we shall see that plenty of monoids are surjunctive.
Indeed, the class of surjunctive monoids contains
all finite monoids, all finitely generated commutative monoids, all cancellative commutative monoids, all residually finite monoids, all finitely generated linear monoids, 
and all cancellative one-sided amenable monoids.
The surjunctivity of cancellative one-sided amenable monoids and the non-surjunctivity of the bicyclic monoid had been already observed in~\cite{semieden}. 
 \par
      The paper is organized as follows.
The first two sections introduce  notation and collect basic facts about monoids and uniform spaces.
In Section~\ref{s:cellular}, we  present a detailed exposition of the general theory of cellular automata over monoids
 (with finite or infinite alphabet)
analogous to the one for groups contained in~\cite{livre}.
 Although the extension from groups to monoids of most of the results of the theory  is straightforward,
the absence of inverses in monoids is responsible for the appearance of new phenomena.
For example, when considering cellular automata over monoids, it may happen that the restriction of an injective  cellular automaton is non-injective and that the cellular automaton induced by a surjective cellular automaton is non-surjective (see Example~\ref{ex:contre-ex-ind-res}).
The results about surjunctive  monoids mentioned above are established in Section~\ref{sec:surj-monoids}.
We also prove   that the class of surjunctive monoids is closed under taking submonoids (Theorem~\ref{t:sub-surjunctive}).
The proof of the surjunctivity of residually finite monoids presented in this section (Theorem~\ref{t:res-finite-surj}) is based on the density of periodic configurations in shifts over residually finite monoids.
In Section~\ref{sec:marked-monoids},
we introduce the space of marked monoids, obtained as quotients of a fixed base monoid,
and show that a limit of surjunctive marked monoids is itself surjunctive (Theorem~\ref{t:surjunct-closed}).
This yields another proof of the surjunctivity of residually finite monoids.
A list of open problems is given in the final section.

\section{Background material on monoids}

We have collected in this section   some basic facts about monoids that will be needed in the sequel.
 For a detailed exposition of the general theory of monoids,
the reader is invited to consult for example \cite{clifford-preston}.

\subsection{Monoids}
A \emph{monoid} is a set  equipped with an associative binary operation and admitting an identity element. 
\par
Let $M$ be a monoid.
 We use a multiplicative notation for the binary operation on $M$ and denote by $1_M$ its identity element.
 \par
 The \emph{opposite monoid} of   $M$ is the monoid with underlying set $M$ and monoid operation $*$ given by
$m * m':= m' m$ for all $m,m' \in M$.
\par
Given $m \in M$, we denote by $L_m$ and $R_m$ the left and right multiplication by $m$, that is,
the maps $L_m \colon M \to M$ and $R_m \colon M \to M$ defined by $L_m(m') = mm'$ and $R_m(m') = m'm$ for all $m' \in M$.
An element $m \in M$  is called \emph{left-cancellable} (resp. \emph{right-cancellable}) 
if the map $L_m$ (resp. $R_m$) is injective.
One says that an element $m \in M$ is cancellable if it is both left-cancellable and right-cancellable.
The monoid $M$ is called \emph{left-cancellative} (resp. \emph{right-cancellative}, resp. \emph{cancellative}) if every element in $M$ is left-cancellable  (resp. right-cancellable, 
resp. cancellable).
An element $m \in M$ is called
\emph{right-invertible} (resp.~\emph{left-invertible}) if  the map  $L_m$ (resp.~$R_m$)
is surjective.
This amounts to saying that there exists an element $m' \in M$ such that $m m' = 1_M$
(resp.~$m' m = 1_M$).
Such an element $m'$ is then called a \emph{right-inverse}
(resp.~\emph{left-inverse}) of $m$.
An element  $m \in M$ is called \emph{invertible} if it is both right-invertible  and left-invertible.
This amounts to saying that there exists an element $m' \in M$ such that $m m' = m' m = 1_M$.
Such an element $m'$ is then unique and is called the \emph{inverse} of $m$, because
it is both the unique right-inverse of $m$ and its unique left-inverse. 
    A \emph{group} is a monoid in which every element is invertible.
\par
A \emph{submonoid} of $M$ is a subset $N \subset M$ such that $1_M \in N$ and $m m' \in N$ for all $m, m' \in N$.
If $N \subset M$ is a submonoid, then $N$ inherits from $M$ a monoid structure obtained by restricting to $N$ the monoid operation on $M$.
\par
Given a subset $S \subset M$,  the submonoid \emph{generated} by $S$, denoted $\langle S \rangle$, is the smallest submonoid of $M$ containing $S$.
 It is the intersection of all the  submonoids of $M$ containing $S$ and consists of all elements   of the form $s_1s_2 \cdots s_n$ where $n \geq 0$ and $s_i \in S$ for all $1 \leq i \leq n$.
One says that the monoid $M$ is \emph{finitely generated} if there exists a finite subset $S \subset M$ such that $M = \langle S \rangle$.
\par
Given   monoids $M$ and $N$,
a map   $\varphi \colon M \to N$ 
is called a \emph{monoid morphism}   
if it satisfies $\varphi(1_{M}) = 1_{N}$ and $\varphi(m m') = \varphi(m) \varphi(m')$ for all $m,m' \in M$.  
A bijective monoid morphism is called a \emph{monoid isomorphism}.
The monoids $M$ and $N$ are said to be \emph{isomorphic} if there exists a monoid isomorphism
$\varphi \colon M \to N$.
One says that a monoid $M$ \emph{embeds} in a monoid $N$ if $M$ is isomorphic to a submonoid of $N$.

\begin{example}
The \emph{symmetric monoid} of a set $X$
is the monoid $\Map(X)$ consisting of all maps
$f \colon X \to X$ with the composition of maps as the monoid operation.
The identity element of   $\Map(X)$ is the identity map $\Id_X \colon X \to X$.
\par
Note that every monoid $M$ embeds in the symmetric monoid $\Map(M)$.
Indeed, the \emph{Cayley map}, that is, the map sending each element $m \in M$ to the map $L_m \in \Map(M)$, yields an injective monoid morphism
from $M$ into $\Map(M)$.
\end{example}

\begin{example}
The \emph{free monoid} based on a set $X$ is the monoid
$X^\star$ consisting of all words with finite length over the alphabet $X$ with the concatenation of words as the monoid operation.
The identity element of   $X^\star$ is the empty word $\varepsilon \in X^\star$, i.e., the only word with length $0$.
Note that if $X$ is reduced to a single element
then $X^\star$ is isomorphic to the additive monoid $\N$ of non-negative integers.
\end{example}

\subsection{Congruence relations} 
Let $X$ be a set. We denote by $\Delta_X:= \{(x,x): x \in X\} \subset X \times X$ the
\emph{diagonal} in $X \times X$.
Given an equivalence relation $\rho \subset X \times X$ on $X$ and an element $x \in X$, we denote by 
$[x] := \{y \in X: (x,y) \in \rho\} \subset X$ the $\rho$-\emph{class} of $x$.
The quotient of $X$ by $\rho$ is the set    $X/\rho := \{[x] : x \in X\}$ 
consisting  of all $\rho$-classes. We say that $\rho$ is of \emph{finite index}
if the set $X/\rho$ is finite. 
 Note that if $\rho, \rho' \subset X \times X$ are two
equivalence relations on $X$ such that $\rho \subset \rho'$ and $\rho$ is of finite index, then 
$\rho'$ is also of finite index and one has $|X/\rho'| \leq |X/\rho|$.
\par
Let $M$ be a monoid.
A \emph{congruence relation} on $M$ is an equivalence relation  $\gamma \subset M \times M$ on $M$ 
 satisfying  
\begin{equation}
\label{e:congruence}
(m_1,m'_1), (m_2,m'_2) \in \gamma \Rightarrow (m_1m_2, m'_1m'_2) \in \gamma
\end{equation}
for all $m_1,m_2,m'_1, m'_2 \in M$.
\par
If $\varphi \colon M \to N$ is a monoid morphism,   then the set
\begin{equation}
\label{e:cong-rel}
\gamma_\varphi := \{(m,m'): \varphi(m) = \varphi(m')\} \subset M \times M
\end{equation}
is a congruence relation on $M$.
It is  called the \emph{kernel congruence relation} of $\varphi$.
Note that $\varphi$ is injective if and only if $\gamma_\varphi=\Delta_M$.
\par
Given a congruence relation $\gamma$ on $M$, the quotient set $M/\gamma$   inherits a natural monoid structure  with the multiplication given by
\begin{equation}
\label{e:congruence-multiplication}
[m][m']:=[mm']
\end{equation}
for all $m,m' \in M$.   
This monoid structure on $M/\gamma$ is the only one for which the canonical surjective map $\pi_\gamma \colon M \to M/\gamma$, defined by $\pi_\gamma(m) := [m]$ for all $m \in M$, 
 is a monoid morphism.
 Moreover, $\gamma$ is the kernel congruence relation of $\pi_\gamma$.
  \par 
 Conversely, suppose that $\varphi \colon M \to N$ is a surjective monoid morphism
 and let $\gamma$ denote the kernel congruence relation of $\varphi$. 
 Then the monoid $M/\gamma$ is canonically isomorphic to the monoid $N$ via the map $[m] \mapsto \varphi(m)$.
Moreover,  after identifying $M/\gamma$ and $N$ by means of  this isomorphism, 
one has  $\pi_\gamma = \varphi$. 
\par
Given a subset $R \subset M \times M$, one defines the congruence relation \emph{generated} by $R$,
denoted $\gamma(R)$, as the smallest  congruence relation containing    $R$. 
It is the intersection of all congruence relations on $M$ that contain  $R$.
\par
Consider the free monoid  $X^\star$  based on a set $X$.
Given a subset $R \subset X^* \times X^*$, let $\gamma(R) \subset X^* \times X^*$ denote the congruence relation on    $X^*$ generated by $R$ and let $M:=X^*/\gamma(R)$ be the corresponding quotient monoid. Then the expression $\langle X; \{r = r': (r,r') \in R\}\rangle$ is called the \emph{presentation} of $M$ with \emph{generators} $X$ 
and \emph{relators} $R$.

\subsection{The bicyclic monoid}

The \emph{bicyclic monoid} is the monoid $B$ given by the presentation 
$B = \langle x,y : x y = 1 \rangle$.
The canonical images of $x$ and $y$ in $B$ are denoted by $p$ and $q$ respectively. 
Every element $m \in B$ may be uniquely written in the form
$m = q^a p^b$, where $a = a(m)$ and $b = b(m)$ are non-negative integers.
\par
The bicyclic monoid may also be viewed as a submonoid of the symmetric monoid $\Map(\N)$
 by regarding $p$ and $q$ as the maps from $\N$ into itself respectively defined by
\[
p(n) =
\begin{cases}
n - 1 &\text{ if } n \geq 1 \\
0 & \text{ if } n = 0
\end{cases}
\quad
\text{and}
\quad
q(n) = n + 1
\quad
\text{for all }  n \in \N.
\] 

 We have the following very useful characterization of monoids containing a submonoid isomorphic to the bicyclic monoid:

\begin{proposition}
\label{p:charact-bicyclic-supmonoid}
Let $M$ be a monoid.
Then the following conditions are equivalent:
\begin{enumerate}[\rm (a)]
\item
$M$ contains a submonoid isomorphic to the bicyclic monoid;
\item
$M$ contains an element that is left-invertible but not right-invertible;
\item
$M$ contains an element that is right-invertible but not left-invertible. 
\end{enumerate}
\end{proposition}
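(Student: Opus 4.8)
The plan is to deduce the whole proposition from a single lemma, which I will call the \emph{bicyclic embedding criterion}: \emph{if $a,b\in M$ satisfy $ab=1_M$ but $ba\neq 1_M$, then the submonoid $\langle a,b\rangle$ is isomorphic to the bicyclic monoid $B$, an isomorphism $B\to\langle a,b\rangle$ being induced by $p\mapsto a$ and $q\mapsto b$.} I will also use repeatedly the elementary remark, already recorded in the text, that an element which is both left- and right-invertible is invertible, with all of its one-sided inverses equal to its two-sided inverse; indeed, if $mm'=1_M$ and $m''m=1_M$ then $m''=m''(mm')=(m''m)m'=m'$.

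Granting the criterion, the two implications from (a) are immediate. If $N\subset M$ is a submonoid and $\theta\colon B\to N$ is an isomorphism, put $\bar p=\theta(p)$ and $\bar q=\theta(q)$, so that $\bar p\bar q=1_M$ while $\bar q\bar p\neq 1_M$. Then $\bar q$ is left-invertible in $M$ (with left-inverse $\bar p$). If $\bar q$ were also right-invertible in $M$, the remark above would force its two-sided inverse to coincide with $\bar p$, giving $\bar q\bar p=1_M$, a contradiction. Hence $\bar q$ witnesses (b). Symmetrically, $\bar p$ is right-invertible but not left-invertible, witnessing (c). This proves (a)$\Rightarrow$(b) and (a)$\Rightarrow$(c).

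For the converse implications I would produce, in each case, a pair to feed into the criterion. If (b) holds, let $m$ be left-invertible but not right-invertible and let $n$ be a left-inverse, so $nm=1_M$; were $mn=1_M$, then $m$ would be right-invertible, so in fact $mn\neq 1_M$. Applying the criterion to $(a,b)=(n,m)$ gives $\langle n,m\rangle\cong B$, so (a) holds. If instead (c) holds, let $m$ be right-invertible but not left-invertible with right-inverse $m'$ ($mm'=1_M$); as above $m'm\neq 1_M$, and the criterion applied to $(a,b)=(m,m')$ yields (a). One may also avoid treating (b) and (c) separately by passing to the opposite monoid $M^{\mathrm{op}}$, which interchanges left- and right-invertibility and satisfies $B^{\mathrm{op}}\cong B$, so that condition (b) for $M$ is condition (c) for $M^{\mathrm{op}}$.

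It remains to prove the criterion, and this is the only real work. Since $ab=1_M$, the assignment $p\mapsto a,\ q\mapsto b$ respects the defining relation of the presentation of $B$ and therefore extends to a monoid morphism $\varphi\colon B\to M$; as every element of $B$ has the form $q^ip^j$, the image of $\varphi$ is exactly $\langle a,b\rangle$ with $\varphi(q^ip^j)=b^ia^j$, so $\varphi$ is surjective onto $\langle a,b\rangle$. The heart of the matter is injectivity, i.e.\ that the elements $b^ia^j$ ($i,j\geq 0$) are pairwise distinct; this is the step I expect to be the main obstacle, since the absence of inverses forbids naive cancellation. I would first record the identities $a^ib^i=1_M$ (obtained by cancelling $ab$ from the inside) together with the fact that no positive power of $a$ or of $b$ equals $1_M$: a relation $a^k=1_M$ with $k\geq 1$ would make $a$ left-invertible, and $a$ is not (else its inverse would be $b$, forcing $ba=1_M$), while $b^k=1_M$ would make $b$ right-invertible, which it is not. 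These at once yield that the powers $b^0,b^1,b^2,\dots$ are pairwise distinct. Then, given $b^ia^j=b^{i'}a^{j'}$ with, say, $j\le j'$, right-multiplying by $b^{j'}$ and using $a^{j}b^{j}=1_M$ collapses the equation to $b^{i+j'-j}=b^{i'}$, whence $i'-i=j'-j=:\delta\geq 0$; left-multiplying the original equality by $a^i$ then gives $a^j=b^{\delta}a^{j+\delta}$, and right-multiplying by $b^j$ gives $1_M=b^{\delta}a^{\delta}$. If $\delta\geq 1$ this exhibits $a^\delta$ as a right-inverse of $b^\delta$ and hence makes $b$ right-invertible, a contradiction; therefore $\delta=0$ and $(i,j)=(i',j')$. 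Thus $\varphi$ is injective, the criterion holds, and the proposition follows.
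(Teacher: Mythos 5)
Your proof is correct and follows essentially the same route as the paper's: both construct the morphism $\varphi \colon B \to M$ from the relation $ab = 1_M$, prove that the powers of one of the two elements are pairwise distinct, and use the normal form $q^i p^j$ to reduce injectivity to showing that $1_M = b^\delta a^\delta$ forces $\delta = 0$. The only differences are cosmetic: you work with powers of $b$ and right-multiplication where the paper works with powers of $s$ (the image of $p$) and left-multiplication, and you obtain the final contradiction by noting $b$ would be right-invertible rather than by computing $ts = 1_M$ explicitly.
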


\begin{proof}
Suppose that $M$ contains a submonoid isomorphic to the bicyclic monoid.
Then there exist elements $p,q \in M$ such that
$pq = 1_M \not= qp$.
The element $p$ is right-invertible. However, $p$  is not left-invertible since otherwise it would be invertible with inverse $q$.
Similarly, $q$ is left-invertible but not right-invertible.
This shows that (a) implies (b) and (c).
\par
Suppose now that (b) or (c) is satisfied.
 This means  that there exist elements $s, t \in M$ such that $s t = 1_M \not= t s$. 
We claim that the submonoid of $M$ generated by $s$ is infinite.
Indeed, suppose that
\begin{equation}
\label{e:if-s-generates-finite}
s^u = s^v
\end{equation}
for some integers $v > u \geq 0$.
After multiplying both sides of~\eqref{e:if-s-generates-finite} by $t^u$ on the right and using the relation $s t = 1_M$, we would get
$$
s^{v - u} = 1_M
$$
which is impossible, since
it would imply that $s$ is invertible (with inverse $s^{v - u - 1}$). 
 This proves our claim. 
\par
Now, since $s t = 1_M$, there is a monoid morphism 
$\varphi \colon B \to M$ such that $\varphi(p) = s$ and $\varphi(q) = t$.
Let us show that $\varphi$ is injective.
Suppose that  $m, m' \in B$ satisfy $\varphi(m) = \varphi(m')$.
Write $m = q^a p^b$ and $m' = q^{a'} p^{b'}$, with $a,b,a',b' \in \N$.
We can assume $a \leq a'$.
As $\varphi(m) = \varphi(m')$ and $\varphi$ is a monoid morphism, we have that
\begin{equation}
\label{e:proof-bicyc-emb}
t^a s^b = t^{a'} s^{b'}.
\end{equation}
After multiplying both sides of this equality by $s^{a'}$ on the left and using $s t = 1_M$, this gives us
$$
 s^{a'-a+b} = s^{b'}.
$$
Since, as observed above, the submonoid generated by $s$ is infinite, 
this implies $a' - a + b = b'$.
Putting $d := a - a' = b - b' \geq 0$, equality \eqref{e:proof-bicyc-emb} becomes
$$
t^a s^b = t^{a + d} s^{b + d}
$$
which yields, after multiplying both sides by $s^a$ on the left and by $t^b$ on the right
\begin{equation}
\label{e:proof-bicyc-emb-4}
1_M = t^d s^d.
\end{equation}
It follows that $d = 0$ since, otherwise, we would get $1_M = t s$ by multiplying both sides of \eqref{e:proof-bicyc-emb-4} by $s^{d - 1}$ on the left and $t^{d - 1}$ on the right.
We conclude  that $m = m'$.
This shows that $\varphi$ is injective.
Consequently, the monoid generated by $s$ and $t$ is isomorphic to the bicyclic monoid $B$.
It follows that $M$ satisfies (a).   
\end{proof}

\subsection{Actions of monoids}
Let $M$ be a monoid.
\par
 An \emph{action} of $M$ on a set $X$ is a map from $ M \times X$ to $X$,
 denoted by $(m,x) \mapsto m x$,
satisfying $ 1_M x=x$ and $m_1(m_2 x) = (m_1 m_2) x$   for all $x \in X$ and $m_1,m_2 \in M$.
 These two conditions  amount to saying that the map
 from $M$ into $\Map(X)$,
obtained by  sending each $m \in M$ to the map $x \mapsto mx$,
 is a monoid morphism.
 A set equipped with an action of  $M$ is also called an $M$-\emph{set}.

\begin{example}
Let $A$ be a set, called the \emph{alphabet} or the \emph{set of symbols}. Consider the set $X := A^M$ consisting of all maps $x \colon M \to A$.
The elements of $X$ are called the \emph{configurations} over the monoid $M$ and the alphabet $A$. 
There is a natural  action of $M$  on $X$ defined by
$(m,x) \mapsto mx$, where
 $$
m x := x \circ R_m
$$
for all $m \in M$ and $x \in X$. 
This formula means that the configuration $m x$ is given by 
\begin{equation*}
mx(m') = x(m' m)
\end{equation*}
for all   $m' \in M$.
This action is called the $M$-\emph{shift}, or simply the \emph{shift},  on $X$.
 \end{example}  

 Let $X$ be an $M$-set.
\par
One says that a subset $Y \subset X$ is   $M$-\emph{invariant},
or simply \emph{invariant},  
if  $m y \in Y$ for all $m \in M$ and $y \in Y$. 
If $Y \subset X$ is invariant, then the action of $M$ on $X$ induces by restriction an action of $M$ on $Y$. 
 \par
 The \emph{orbit} of a point $x \in X$ is the 
invariant  subset $\OO_x \subset X$ defined by
 $$
 \OO_x= \OO_x^X :=\{m x: m \in M\}.
 $$
 The \emph{stabilizer relation} of the point $x$ is the equivalence relation $\rho_x$ on $M$ defined by 
\begin{equation}
\label{e:def-rho-x}
  \rho_x := \{(m,m') \in M \times M: m x= m' x\} \subset M \times M.
\end{equation}
There is a natural bijection from $M/\rho_x$ onto $ {\mathcal O}_x$
  given by   $[m] \mapsto m x$.
   \par
The set 
$\gamma := \bigcap_{x \in X}\rho_x \subset M \times M$ is a congruence relation on $M$.
It is the kernel congruence relation  of the monoid morphism from $M$ into  $\Map(X)$ associated with the action.  
One says that the action of $M$ on $X$  is \emph{faithful} if $\gamma = \Delta_M$, i.e., if the monoid morphism $M \to \Map(X)$ associated with the action  is injective.

\begin{example}
\label{ex:shift-is-faithful}
Suppose that   $A$ is a set with more than one element.
Then the shift action of $M$ on $A^M$ is faithful.
Indeed, let $m_1$ and $m_2$ be two distinct elements  in $M$.
Choose   $a,b \in A$ with $a \not= b$ and
consider the configuration $x \in A^M$ defined by $x(m_1) = a$ and $x(m) = b$ for all $m \in M \setminus \{m_1\}$. 
We then have   $(m_1x)(1_M) =
x(m_1) = a$ and
$   (m_2x)(1_M) = x(m_2) = b$,
 so that $(m_1x)(1_M) \neq (m_2x)(1_M)$ and hence $m_1 x \neq m_2 x$. 
 \end{example}

Let   $X$ a $M$-set.
  We say that a point $x \in X$ is \emph{periodic} if its orbit
${\mathcal O}_x$ is finite.
This amounts to saying that the stabilizer  relation $\rho_x$ defined by \eqref{e:def-rho-x} is of finite index.
We  denote by $\Per(X)$ the set consisting of all periodic points in $X$.
\par 
 Given a congruence relation $\gamma \subset M \times M$ on $M$, we set
\[
\Inv(\gamma) = \Inv_X(\gamma):= \{x \in X:  m x =  m' x \mbox{ for all } (m,m') \in \gamma\} =
\{x \in X: \gamma \subset \rho_x\}.
\]

\begin{example}
\label{x:inv-gamma-shift}
Let $A$ be a set and let $\gamma$ be a congruence relation on $M$.
Consider the set $X := A^M$ equipped with the $M$-shift.
 Then $\Inv(\gamma)$ consists of the configurations $x \colon M \to A$
that are constant on each equivalence class of $\gamma$.
Indeed, if $x \in \Inv(\gamma)$ and $(m,m') \in \gamma$, then
$mx = m'x$ and hence
$x(m) = mx(1_M) = m'x(1_M) = x(m')$.
Conversely, if a configuration $x \in X$  is constant on each equivalence class of $\gamma$,
then, for all    $(m,m') \in \gamma$ and $u \in M$,
   we have that 
$(u m ,u m' ) \in \gamma$   and hence
$mx(u) = x(u m) = x(u m') = m'x(u)$ so that  $m x = m' x$,
which shows that  $x \in \Inv(\gamma)$.  
 \end{example}

\begin{proposition}
\label{p:E-periodic-0}
Let $X$ be an $M$-set and suppose that $\gamma \subset M \times M$ is a congruence relation
on $M$.  
Then $\Inv(\gamma)$ is an $M$-invariant subset of $X$.
\end{proposition}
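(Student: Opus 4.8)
The plan is to verify the invariance condition directly from the definitions. I would fix an arbitrary point $x \in \Inv(\gamma)$ and an arbitrary element $u \in M$, and aim to show that the translate $ux$ again belongs to $\Inv(\gamma)$. Using the characterization $\Inv(\gamma) = \{x \in X : \gamma \subset \rho_x\}$, this amounts to checking that $m(ux) = m'(ux)$ for every pair $(m,m') \in \gamma$.

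First I would rewrite the two sides using the action axiom $m_1(m_2 x) = (m_1 m_2)x$, which converts $m(ux)$ into $(mu)x$ and $m'(ux)$ into $(m'u)x$. The equality to be proved thus becomes $(mu)x = (m'u)x$, that is, $(mu, m'u) \in \rho_x$. Since $x \in \Inv(\gamma)$ precisely means $\gamma \subset \rho_x$, it suffices to establish the stronger inclusion $(mu, m'u) \in \gamma$.

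The key step is then to invoke the congruence property~\eqref{e:congruence} of $\gamma$. Starting from the given pair $(m,m') \in \gamma$ together with the diagonal pair $(u,u) \in \gamma$, which holds by reflexivity of the equivalence relation $\gamma$, the implication in~\eqref{e:congruence} immediately yields $(mu, m'u) \in \gamma$. Combining this with $\gamma \subset \rho_x$ gives $(mu)x = (m'u)x$, hence $m(ux) = m'(ux)$, and therefore $ux \in \Inv(\gamma)$, as desired.

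I do not expect any genuine obstacle here, since the statement is a direct consequence of the compatibility of $\gamma$ with right multiplication. The only points requiring a little care are to apply the action axiom in the correct direction, so that left-translating by $u$ in $X$ corresponds to right-multiplying the test indices $m$ and $m'$ by $u$ in $M$, and to observe that this right-compatibility is obtained from the full two-sided congruence condition by pairing $(m,m')$ with the trivial pair $(u,u)$.
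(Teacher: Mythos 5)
Your proof is correct and follows essentially the same route as the paper: fix $x \in \Inv(\gamma)$ and $u \in M$, use right-compatibility of the congruence to get $(mu,m'u) \in \gamma$ from $(m,m') \in \gamma$, and then apply the action axiom to conclude $m(ux) = (mu)x = (m'u)x = m'(ux)$. Your explicit derivation of right-compatibility by pairing $(m,m')$ with the diagonal pair $(u,u)$ in~\eqref{e:congruence} is precisely what the paper leaves implicit when it writes ``as $\gamma$ is a congruence relation.''
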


\begin{proof}
Let $x \in \Inv(\gamma)$ and  $m \in M$.
Suppose that $(m',m'') \in \gamma$.
As $\gamma$ is a congruence relation,
  we have that $(m'm,m''m) \in \gamma$ and hence
\[
 m' ( mx) = ( m' m)x =  (m'' m)x =  m'' (  m x).
\]
We deduce that $ mx \in \Inv(\gamma)$. 
This shows that $\Inv(\gamma)$ is $M$-invariant.
\end{proof}

\begin{proposition}
\label{p:per-union-e-gamma}
Let $X$ be an $M$-set.
 Then one has
\begin{equation}
\label{e:per-union-e-gamma}
\Per(X) = \bigcup_{\gamma \in \CC_f(M)} \Inv(\gamma),
\end{equation} 
where $\CC_f(M)$ denotes the set of  finite index congruence relations   on $M$.
\end{proposition}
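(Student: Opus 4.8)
The plan is to establish~\eqref{e:per-union-e-gamma} by proving the two inclusions separately. The inclusion $\bigcup_{\gamma \in \CC_f(M)} \Inv(\gamma) \subset \Per(X)$ is the easy one. Indeed, if $x \in \Inv(\gamma)$ for some $\gamma \in \CC_f(M)$, then $\gamma \subset \rho_x$ by the very definition of $\Inv(\gamma)$. Since $\gamma$ is of finite index, the monotonicity remark recorded in the subsection on congruence relations (if $\rho \subset \rho'$ are equivalence relations on a set and $\rho$ is of finite index, then so is $\rho'$) applied with the set $M$ shows that $\rho_x$ is of finite index as well. Hence $x$ is periodic, i.e.\ $x \in \Per(X)$.

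The reverse inclusion $\Per(X) \subset \bigcup_{\gamma \in \CC_f(M)} \Inv(\gamma)$ is the substantial part of the argument. The naive attempt of taking $\gamma = \rho_x$ fails because $\rho_x$ is in general only an equivalence relation on $M$ and need not satisfy the congruence condition~\eqref{e:congruence}. To circumvent this, I would produce a finite-index congruence relation sitting below $\rho_x$ by exploiting the finiteness of the orbit. So let $x \in \Per(X)$, so that $\OO_x$ is finite. Since $\OO_x$ is an $M$-invariant subset of $X$, the action of $M$ restricts to $\OO_x$ and thereby yields a monoid morphism $\psi \colon M \to \Map(\OO_x)$ sending each $m \in M$ to the map $z \mapsto m z$ on $\OO_x$.

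The key point is that $\Map(\OO_x)$ is a finite monoid, because $\OO_x$ is finite. Consequently the image $\psi(M)$ is finite, and since $M/\gamma_\psi$ is isomorphic to $\psi(M)$, the kernel congruence relation $\gamma := \gamma_\psi$ of $\psi$ is of finite index; that is, $\gamma \in \CC_f(M)$. It then remains only to check that $x \in \Inv(\gamma)$, equivalently that $\gamma \subset \rho_x$. This is where the choice of orbit pays off: if $(m,m') \in \gamma$, then $\psi(m) = \psi(m')$, so $m z = m' z$ for every $z \in \OO_x$; applying this with $z = 1_M x = x \in \OO_x$ gives $m x = m' x$, i.e.\ $(m,m') \in \rho_x$. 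Hence $x \in \Inv(\gamma)$, completing the inclusion and the proof.

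I would expect the only genuine subtlety to be the observation that $\rho_x$ itself is not available as the required congruence, so that one must descend to a finite-index congruence manufactured from the finite orbit; once the morphism $\psi$ into $\Map(\OO_x)$ is in hand, the finiteness of $\Map(\OO_x)$ and the inclusion $\gamma \subset \rho_x$ are both direct verifications.
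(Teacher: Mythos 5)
Your proof is correct and follows essentially the same route as the paper's: the easy inclusion via the monotonicity of finite index under $\gamma \subset \rho_x$, and the reverse inclusion by taking the kernel congruence of the morphism $M \to \Map(\OO_x)$ induced by restricting the action to the finite orbit, which has finite index and is contained in $\rho_x$. The paper's $\gamma_x$ is exactly your $\gamma_\psi$, so there is nothing to add.
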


\begin{proof}
Denote by $E$ the right-hand side of~\eqref{e:per-union-e-gamma}.
Let $\gamma$ be a finite index congruence relation on $M$  and suppose that $x \in \Inv(\gamma)$. 
Then $\gamma \subset \rho_x$ and therefore $\rho_x$ is also of finite index. 
Therefore $x$ is periodic.
This shows that $E \subset \Per(X)$.
\par
Conversely, suppose $x \in \Per(X)$.
Denote by  
$\gamma_x$ the kernel congruence relation of the morphism from $M$ into $\Map(\OO_x)$
associated with the restriction of the action of $M$ on $X$ to the orbit of $x$, i.e., 
 $$
\gamma_x := \{(m,m') \in M \times M : m y = m' y \text{  for all  } y \in \OO_x \}.
 $$
 We then have $x \in \Inv(\gamma_x)$ since $x \in \OO_x$.
On the other hand, the monoid $M/\gamma_x$ is finite, since it is isomorphic to a submonoid of $\Map(\OO_x)$
and  $\OO_x$ is finite.
 We deduce  that $\gamma_x$ is of finite index.
This shows that $\Per(X) \subset E$.
\end{proof}

 Given two $M$-sets $X$ and $Y$, one says that a map $f \colon X \to Y$ is $M$-\emph{equivariant},
 or simply \emph{equivariant},  
if $f(m x) = m f(x)$ for all $m \in M$ and $x \in X$.

\begin{proposition}
\label{p:E-periodic}
Let $X$ and $Y$ be two $M$-sets. 
Suppose that   $f \colon X \to Y$ is an $M$-equivariant map. 
Then the following hold:
\begin{enumerate}[\rm (i)]
\item
$f(\OO_x^X) = \OO_{f(x)}^Y$ for all $x \in X$;
\item
$f(\Per(X)) \subset \Per(Y)$;
\item
if $\gamma$ is a congruence relation on $M$ then $f(\Inv_X(\gamma)) \subset \Inv_Y(\gamma)$.
\end{enumerate}
\end{proposition}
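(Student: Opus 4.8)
The plan is to reduce all three items to the single equivariance identity $f(mx) = mf(x)$, treating them in the given order since (ii) will be built directly on (i).

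For (i), I would verify the two inclusions by direct computation. Any element of $f(\OO_x^X)$ has the form $f(mx)$ with $m \in M$, and equivariance rewrites it as $mf(x) \in \OO_{f(x)}^Y$; conversely, every $mf(x) \in \OO_{f(x)}^Y$ equals $f(mx)$ by equivariance and hence lies in $f(\OO_x^X)$. This settles the claimed set equality $f(\OO_x^X) = \OO_{f(x)}^Y$.

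For (ii), I would simply feed (i) into the definition of periodicity: if $x \in \Per(X)$ then $\OO_x^X$ is finite, so by (i) its image $\OO_{f(x)}^Y = f(\OO_x^X)$ is finite, being the image of a finite set, whence $f(x) \in \Per(Y)$. As $x$ ranges over $\Per(X)$, this gives the inclusion $f(\Per(X)) \subset \Per(Y)$. For (iii), given $x \in \Inv_X(\gamma)$ and a pair $(m,m') \in \gamma$, the defining condition of $\Inv_X(\gamma)$ gives $mx = m'x$; applying $f$ and using equivariance on both sides yields $mf(x) = m'f(x)$, and since $(m,m')$ was an arbitrary element of $\gamma$ this means $f(x) \in \Inv_Y(\gamma)$.

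I do not anticipate any genuine obstacle here: all three assertions are formal consequences of equivariance, and the congruence hypothesis in (iii) is not even used in the argument (only the definition of $\Inv$ as a condition ranging over pairs in $\gamma$ matters). The single point requiring care is to apply equivariance on \emph{both} sides of each relevant equality; once (i) is established, the remaining two parts cascade immediately.
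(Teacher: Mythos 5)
Your proof is correct and follows essentially the same route as the paper: equivariance gives (i) by the two-inclusion computation, (ii) follows since the image of a finite orbit is finite, and (iii) is obtained by applying $f$ to $mx = m'x$ and using equivariance on both sides. Your side remark is also accurate — the paper's argument for (iii) likewise uses only that $\gamma$ is a set of pairs, not that it is a congruence relation.
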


\begin{proof}
Assertion (i) follows from the fact that $f(m x) = mf(x)$ for all $m \in M$.
Assertion (ii) is an immediate consequence of (i) since, by definition, a point is periodic if and only if its orbit is finite.
\par
Let $\gamma$ be a congruence relation on $M$. 
Let $x \in \Inv_X(\gamma)$ and suppose that $(m,m') \in \gamma$.
Then we have that $ mx = m' x$ and hence $f(mx) = f(m' x)$. 
Since $f$ is $M$-equivariant, this gives us $ m f(x) = m'f(x)$. 
We deduce that $f(x) \in \Inv_Y(\gamma)$.
This shows (iii).
\end{proof}

 When $X$ is a topological space, one says that an action   of $M$ on $X$ is \emph{continuous}  if 
the map from $X$ into itself given by $x \mapsto mx$ is continuous for each $m \in M$.

\begin{example}
\label{ex:prodiscrete-top}
Given two sets $A$ and $E$, the \emph{prodiscrete topology} on $A^E = \prod_{e \in E} A$ is the product topology obtained by taking the discrete topology on each factor $A$ 
of $A^E$. If $x \colon E \to A$ is an element of $A^E$, then a base of neighborhoods of $x$ for the prodiscrete topology is provided by the sets
$$
 V(x,F) := \{y \in A^E : x\vert_F = y\vert_F\},
$$
where $F$ runs over all finite subsets of $E$ (we use the notation  $x\vert_F$ to denote the restriction of $x$ to $F$).
The prodiscrete topology  is Hausdorff and totally disconnected.
In the particular case when $A$ is finite,  $A^E$ is compact for the prodiscrete topology  by the Tychonoff product theorem. 
\par
If $M$ is a monoid and $A$ is a set,
then the shift action of $M$ on $A^M$ is  continuous for the prodiscrete topology.
Indeed, suppose that  $x \in A^M$ and $m \in M$.
Then, for  any  finite subset $F$ of $M$, 
   the set $F m$ is also finite. 
Moreover, if $y \in A^M$ coincides with $x$ on $F m$ then $m y$ coincides with $m x$ on $F$.   
\end{example}

\begin{proposition}
\label{p:inv-gamma-closed}
Suppose that  $X$ is a Hausdorff topological space equipped with a continuous action of the monoid $M$
and let $\gamma$ be a congruence relation on $M$.
Then $\Inv(\gamma)$ is a closed subset of $X$.
\end{proposition}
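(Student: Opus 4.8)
The plan is to realize $\Inv(\gamma)$ as an intersection of closed subsets of $X$, indexed by the pairs belonging to $\gamma$. For each pair $(m,m') \in M \times M$, consider the set
\[
E_{m,m'} := \{x \in X : m x = m' x\},
\]
which is precisely the locus where the two self-maps $\varphi_m \colon x \mapsto mx$ and $\varphi_{m'} \colon x \mapsto m'x$ of $X$ coincide. Unwinding the definition of $\Inv(\gamma)$ immediately gives
\[
\Inv(\gamma) = \bigcap_{(m,m') \in \gamma} E_{m,m'},
\]
so the whole statement reduces to showing that each individual set $E_{m,m'}$ is closed in $X$.

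To establish this, I would invoke the standard fact that the equalizer of two continuous maps into a Hausdorff space is closed, and supply its short proof in the present setting. By hypothesis the action is continuous, which means exactly that each of $\varphi_m$ and $\varphi_{m'}$ is a continuous map from $X$ to $X$. Hence the map $\Phi \colon X \to X \times X$ defined by $\Phi(x) := (\varphi_m(x), \varphi_{m'}(x)) = (mx, m'x)$ is continuous for the product topology on $X \times X$. Since $X$ is Hausdorff, its diagonal $\Delta_X \subset X \times X$ is closed, and therefore
\[
E_{m,m'} = \Phi^{-1}(\Delta_X)
\]
is closed in $X$ as the preimage of a closed set under a continuous map.

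Finally I would conclude: being an arbitrary intersection of closed sets, $\Inv(\gamma)$ is itself closed in $X$, as required. I do not expect a genuine obstacle here, since the argument is a routine combination of the two hypotheses; the only point deserving care is to verify that both hypotheses are actually used and in the right way, namely continuity of the action to guarantee that each $\varphi_m$ is continuous, and the Hausdorff property of $X$ to guarantee that $\Delta_X$ is closed. (Neither conclusion survives if one of these assumptions is dropped, which is what makes the precise formulation of the hypotheses essential.)
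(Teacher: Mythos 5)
Your proof is correct and follows exactly the paper's argument: the paper also writes $\Inv(\gamma)$ as the intersection of the sets $\{x \in X : mx = m'x\}$ over pairs in $\gamma$ and notes each is closed by continuity of the action and the Hausdorff property. The only difference is that you spell out the standard equalizer-via-diagonal argument, which the paper simply asserts.
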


\begin{proof}
Since $X$ is Hausdorff, the continuity of the action implies that
if we fix $m,m' \in M$, then the set of $x \in X$ such that $m x = m' x$ is closed in $X$.
This shows that  the set $\Inv(\gamma)$ is the intersection of a family of closed subsets of $X$ and hence closed in $X$.
\end{proof}

  \subsection{Local and residual properties of monoids}
Let $\PP$ be a property of monoids, that is,
a property that is either satisfied or not for each monoid $M$ and depending only on the isomorphism type of $M$.
 \par
One says that a monoid $M$ is \emph{locally} $\PP$ if every finitely generated submonoid of $M$ 
satisfies  $\PP$. 
 \par
One says that a monoid $M$ is \emph{residually} $\PP$ if, given any pair of distinct elements $m,m' \in M$, there exists a monoid $N$ satisfying $\PP$ and a monoid morphism
$\varphi \colon M \to N$ such that $\varphi(m) \not= \varphi(m')$.
One says that a monoid $M$
is \emph{fully residually} $\PP$ if
for every finite subset $K \subset M$
there exists a monoid $N$ satisfying $\PP$ and a monoid morphism
$\varphi \colon M \to N$ such that the restriction $\varphi\vert_K$ of $\varphi$ to $K$
is injective. 
Clearly every fully residually $\PP$ monoid is   residually $\PP$.
We have the following partial converse.
 \begin{proposition} 
\label{p:res-p}
Let   $\PP$ be a property of monoids. 
Suppose that $\PP$ is closed under finite direct products
(i.e., if $M_1$ and $M_2$ are monoids satisfying $\PP$ then the monoid $M_1 \times M_2$ also satisfies $\PP$).  
 Then every residually $\PP$ monoid is fully residually $\PP$.
\end{proposition}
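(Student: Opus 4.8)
The plan is to convert injectivity of $\varphi|_K$ into the finitely many pairwise separation conditions that residual $\PP$ directly provides, and then to bundle the separating morphisms into a single morphism by forming a finite direct product.

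First I would fix a residually $\PP$ monoid $M$ and a finite subset $K \subset M$, and reduce to the case $|K| \geq 2$: if $|K| \leq 1$ then $\varphi|_K$ is injective for \emph{every} morphism $\varphi$, so (when $M$ has at least two elements) one may enlarge $K$ to a subset with at least two elements, a morphism injective on the larger set being a fortiori injective on $K$. Assuming now $|K| \geq 2$, I would introduce the finite set of ordered pairs of distinct elements $D := \{(k,k') \in K \times K : k \neq k'\}$, which is nonempty. For each $(k,k') \in D$, the hypothesis that $M$ is residually $\PP$ yields a monoid $N_{(k,k')}$ satisfying $\PP$ together with a monoid morphism $\varphi_{(k,k')} \colon M \to N_{(k,k')}$ such that $\varphi_{(k,k')}(k) \neq \varphi_{(k,k')}(k')$.

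Next I would set $N := \prod_{(k,k') \in D} N_{(k,k')}$ and define $\varphi \colon M \to N$ componentwise by $\varphi(m) := (\varphi_{(k,k')}(m))_{(k,k') \in D}$. Since the monoid operation on a direct product is componentwise and each $\varphi_{(k,k')}$ is a monoid morphism, $\varphi$ sends $1_M$ to the identity of $N$ and respects products, hence is a monoid morphism. The crucial point is that $N$ satisfies $\PP$: the assumed closure of $\PP$ under binary direct products extends, by a straightforward induction on the number of factors, to closure under any \emph{finite} direct product, and $D$ is finite and nonempty. Finally, to check that $\varphi|_K$ is injective, I would take distinct $k,k' \in K$, note that $(k,k') \in D$, and observe that the $(k,k')$-component of $\varphi(k)$ is $\varphi_{(k,k')}(k)$, which differs from the $(k,k')$-component $\varphi_{(k,k')}(k')$ of $\varphi(k')$; hence $\varphi(k) \neq \varphi(k')$. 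This exhibits the required morphism and shows that $M$ is fully residually $\PP$.

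I do not anticipate a genuine obstacle: the entire content is the product construction, and the only steps needing a word of justification are the upgrade of binary to finite product closure (a routine induction) and the bookkeeping of the reduction to $|K| \geq 2$, which disposes of the degenerate empty-product case where $D = \emptyset$.
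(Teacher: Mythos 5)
Your proof is correct and takes essentially the same route as the paper's: for each pair of distinct elements of $K$ you pick a separating morphism supplied by residual $\PP$, bundle the targets into a finite direct product (which satisfies $\PP$ by the closure hypothesis), and check that the product morphism is injective on $K$. The only differences are cosmetic: the paper indexes over unordered pairs rather than ordered ones and silently passes over the degenerate case $|K|\leq 1$ (where the indexing set is empty), which you handle explicitly.
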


\begin{proof}
Let $M$ be a residually $\PP$ monoid and let $K$ be a finite subset of $M$.
Consider the set $D$ defined by 
$$
D := \{\{m,m'\} : m,m' \in K \text{ and } m \not= m'\}.
$$
As $M$ is residually $\PP$, for each $d= \{m,m'\} \in D$, there exist a monoid $N_d$ satisfying $\PP$ with a monoid morphism $\psi_d \colon M \to N_d$ such that $\psi_d(m) \not= \psi_d(m')$.
By our hypothesis on $\PP$, 
the product monoid $P := \prod_{d \in D} N_d$ satisfies $\PP$.
Since the product monoid morphism $\varphi := \Pi_{d \in D} \psi_d \colon M \to P$ is injective on $K$,
this shows that $M$ is fully residually $\PP$.
\end{proof}

\begin{corollary}
\label{c:res-finite-fully}
Every residually finite monoid is fully residually finite.
\qed
\end{corollary}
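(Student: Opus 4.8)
The plan is to derive this corollary as an immediate instance of Proposition~\ref{p:res-p}. That proposition asserts that any property $\PP$ of monoids which is closed under finite direct products propagates from the residual to the fully residual setting: every residually $\PP$ monoid is then fully residually $\PP$. So the entire task reduces to selecting the appropriate $\PP$ and checking the single hypothesis of the proposition.

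First I would take $\PP$ to be the property of being a finite monoid, so that ``residually $\PP$'' means precisely ``residually finite'' and ``fully residually $\PP$'' means ``fully residually finite'', matching the statement to be proved. The only thing left to verify is that finiteness is closed under finite direct products. This is immediate: if $M_1$ and $M_2$ are finite monoids, then the underlying set of $M_1 \times M_2$ has cardinality $|M_1| \cdot |M_2| < \infty$, so $M_1 \times M_2$ is again a finite monoid.

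With this verification in hand, Proposition~\ref{p:res-p} applies verbatim and yields that every residually finite monoid is fully residually finite, which is exactly the assertion of the corollary. There is essentially no obstacle here: the content lives entirely in the proposition, whose proof constructs, for a finite subset $K$ of a residually finite monoid $M$, a finite product $P = \prod_{d \in D} N_d$ of finite quotients separating the pairs of distinct points of $K$, and the only fact about the class of finite monoids needed beyond residual finiteness is the stability under finite products just checked.
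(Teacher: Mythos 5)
Your proof is correct and is exactly the paper's intended argument: the corollary is stated with an immediate \qed because it follows from Proposition~\ref{p:res-p} applied to the property $\PP$ of being finite, whose closure under finite direct products is the trivial cardinality observation you make. Nothing further is needed.
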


\subsection{Symbolic  characterization of residually finite monoids}
The following statement is well known, at least in the case of groups  (see e.g.~\cite[Theorem~2.7.1]{livre}).

\begin{proposition}
\label{p:res-finite-dynam}
Let $M$ be a monoid. 
Then the  following conditions are equivalent:
\begin{enumerate}[{\rm (a)}]
\item 
the monoid $M$ is residually finite;
\item 
for every set $A$, the  set   of  periodic configurations of $A^M$   is
dense in $A^M$ for the prodiscrete topology;
\item 
there exists a set $A$ having more than one element such that 
the set of  periodic configurations of $A^M$  is
dense in $A^M$ for the prodiscrete topology;
\item 
there exists a Hausdorff topological space $X$ equipped with a continuous and faithful
action of   the monoid $M$ such that  the set of  periodic points of $X$ is dense in $X$;
\item 
the intersection of all finite index congruence relations  on $M$ is reduced to the diagonal $\Delta_M
\subset M \times M$.
\end{enumerate}
\end{proposition}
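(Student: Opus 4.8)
The plan is to establish the cycle of implications $\text{(a)} \Rightarrow \text{(b)} \Rightarrow \text{(c)} \Rightarrow \text{(d)} \Rightarrow \text{(e)} \Rightarrow \text{(a)}$, which makes all five conditions equivalent. The links $\text{(b)} \Rightarrow \text{(c)}$ and $\text{(c)} \Rightarrow \text{(d)}$ are almost immediate: for $\text{(b)} \Rightarrow \text{(c)}$ one simply specialises to a two-element alphabet $A = \{0,1\}$, and for $\text{(c)} \Rightarrow \text{(d)}$ one takes $X := A^M$ endowed with the prodiscrete topology and the shift action. By Example~\ref{ex:prodiscrete-top} this topology is Hausdorff and the shift is continuous, by Example~\ref{ex:shift-is-faithful} the shift is faithful because $A$ has more than one element, and the density of periodic points is exactly hypothesis (c).

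For $\text{(e)} \Rightarrow \text{(a)}$ I would argue directly: given distinct $m, m' \in M$, condition (e) says $(m,m') \notin \bigcap_{\gamma \in \CC_f(M)} \gamma$, so some finite index congruence $\gamma$ omits $(m,m')$; then $M/\gamma$ is a finite monoid and $\pi_\gamma(m) \neq \pi_\gamma(m')$, witnessing residual finiteness. For $\text{(a)} \Rightarrow \text{(b)}$, let $A$ be arbitrary, fix $x \in A^M$ and a finite $F \subset M$, and produce a periodic configuration in the basic neighbourhood $V(x,F)$. By Corollary~\ref{c:res-finite-fully} the monoid $M$ is fully residually finite, so there is a morphism $\varphi$ onto a finite monoid that is injective on $F$; its kernel congruence $\gamma_\varphi$ is then of finite index and separates the elements of $F$. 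Using Example~\ref{x:inv-gamma-shift}, I define $y \in \Inv(\gamma_\varphi)$ to be constant on each $\gamma_\varphi$-class, assigning to the class of each $f \in F$ the value $x(f)$ (unambiguous since the elements of $F$ lie in distinct classes) and an arbitrary value elsewhere. Then $y\vert_F = x\vert_F$, and $y \in \Inv(\gamma_\varphi) \subset \Per(A^M)$ by Proposition~\ref{p:per-union-e-gamma}, so periodic configurations are dense.

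The crux of the argument, and the step I expect to be the main obstacle, is $\text{(d)} \Rightarrow \text{(e)}$. Let $X$ be as in (d) and take distinct $m, m' \in M$; the goal is a finite index congruence relation excluding $(m,m')$. Faithfulness of the action supplies a point $x \in X$ with $m x \neq m' x$. The set $U := \{z \in X : m z \neq m' z\}$ is open, since its complement is the set where the two continuous maps $z \mapsto m z$ and $z \mapsto m' z$ into the Hausdorff space $X$ agree, which is closed (this is the mechanism underlying Proposition~\ref{p:inv-gamma-closed}). As $x \in U$ and periodic points are dense, $U$ contains a periodic point $z$, so $m z \neq m' z$, i.e.\ $(m,m') \notin \rho_z$. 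By Proposition~\ref{p:per-union-e-gamma} there is a finite index congruence $\gamma$ with $z \in \Inv(\gamma)$, equivalently $\gamma \subset \rho_z$, whence $(m,m') \notin \gamma$. Since $\Delta_M$ is contained in every congruence, this shows that the intersection of all finite index congruence relations contains no pair $(m,m')$ with $m \neq m'$ and hence equals $\Delta_M$. The delicate point is the simultaneous orchestration of the three hypotheses: faithfulness to locate a separating point, the Hausdorff and continuity assumptions to render the separating set open, and density together with Proposition~\ref{p:per-union-e-gamma} to upgrade a periodic separating point to an honest finite index congruence.
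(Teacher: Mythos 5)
Your proof is correct and follows essentially the same route as the paper: the same cycle $\text{(a)} \Rightarrow \text{(b)} \Rightarrow \text{(c)} \Rightarrow \text{(d)} \Rightarrow \text{(e)} \Rightarrow \text{(a)}$, with (a) $\Rightarrow$ (b) via Corollary~\ref{c:res-finite-fully} and Proposition~\ref{p:per-union-e-gamma}, and (d) $\Rightarrow$ (e) resting on exactly the paper's three ingredients (faithfulness, Hausdorff plus continuity, density of periodic points together with Proposition~\ref{p:per-union-e-gamma}). The only cosmetic differences are that you build the periodic configuration $y$ directly as a map constant on $\gamma_\varphi$-classes rather than as $z \circ \varphi$ for $z \in A^N$, and you phrase (d) $\Rightarrow$ (e) contrapositively (finding a finite index congruence omitting a given pair) where the paper argues that a pair in every finite index congruence must be diagonal.
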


\begin{proof}
Suppose that $M$ is residually finite. 
Let $A$ be a set.  
To prove that periodic configurations are dense in $A^M$, it suffices to  show that,
for every  $x \in A^M$ and any finite subset  $F \subset M$,
 there exists a periodic configuration  $y \in  A^M$ which coincides with $x$ on $F$.
To see this, first observe that it follows from Corollary~\ref{c:res-finite-fully} that we can find a finite monoid
$N$ and a  monoid morphism $\varphi \colon M \to N$ whose restriction to  $F$ is injective.
After replacing $M$ by $\varphi(M)$, we can assume that $\varphi$ is surjective.
Let $\gamma$ denote the kernel congruence relation of $\varphi$ and identify $N$ with $M/\gamma$.
For all $z \in A^N$, 
the configuration   $z \circ \varphi$ is constant on each $\gamma$-class and hence
$ z \circ \varphi\in \Inv(\gamma)$ (see Example~\ref{x:inv-gamma-shift}).
As  $\gamma$ is of finite index since $N$ is finite, 
it follows from Proposition~\ref{p:per-union-e-gamma} that every configuration in $\Inv(\gamma)$ is periodic.
Thus, $z \circ \varphi$ is periodic for all $z \in A^N$. 
On the other hand,
 since $\varphi\vert_F$
is injective, we can find a configuration $z \in A^N$ such that
  $y := z \circ \varphi$ coincides with $x$   on $F$.
Consequently, the configuration $y$ has the required properties.
  This shows that (a) implies (b).
\par
Condition (b) trivially implies (c).
\par
  As $A^M$ is Hausdorff and the  shift action of $M$ on $A^M$ is both continuous
  and faithful if $A$ has more than one element
  (cf. Example~\ref{ex:prodiscrete-top} and Example~\ref{ex:prodiscrete-top}),
condition (c) implies  (d).
\par
 Suppose (d).
It follows from the result of Proposition~\ref{p:per-union-e-gamma} that, for every $x \in \Per(X)$,  there exists a finite index congruence relation $\gamma$ on $M$ such that
 $x \in \Inv(\gamma)$. 
 Thus, if a pair $(m,m')$ belongs to the intersection of all finite index congruence relations on $M$,
 then $m x = m' x$ for all $x \in \Per(X)$.
 As $\Per(X)$ is dense in $X$ and $X$ is Hausdorff we deduce that $m x = m' x$ for all $x \in X$ by continuity of the action.
It follows that $m = m'$ since the action is faithful.
We deduce that the intersection of all finite index congruence relations on $M$ is reduced to $\Delta_M$.
 This shows that (d) implies (e).
\par
Finally, suppose (e). 
Let $m_1$ and $m_2$ be two distinct elements in $M$. 
By (e),  we can find  a finite index congruence relation 
$\gamma \subset M \times M$ such that $(m_1,m_2) \notin \gamma$. 
Then the quotient monoid morphism $\pi_\gamma \colon M \to M/\gamma$ satisfies $\pi_\gamma(m_1) \neq \pi_\gamma(m_2)$. 
Consequently, $M$ is residually finite.
This shows that (e) implies (a).
\end{proof}

\begin{remark}
Let $\Sigma$ be a finite set and let $M:=\Sigma^*$ denote the free monoid on $\Sigma$.
Then $M$ is residually finite. Indeed,   every submonoid of a residually finite monoid is itself residually finite. Moreover every free monoid is a submonoid of a free group and free groups are known to be residually finite (see e.g. \cite[Theorem~2.3.1]{livre}).
In \cite{tree-shifts}, it is shown, by purely combinatorial arguments, that if $A$ is a finite alphabet set and $X \subset A^M$ is a \emph{sofic tree shift} (e.g. $X = A^M$), then the periodic configurations in $X$ are dense in $X$. 
\end{remark}

\section{Background material on uniform spaces}
\label{s:background-unif}

This section contains the material on uniform spaces that is needed in the sequel.
The theory of uniform spaces was developed by Weil in~\cite{weil}.
For a  detailed treatment,  
the reader is referred to \cite{weil}, \cite[Ch. 2]{bourbaki}, \cite[Ch. 6]{kelley},  \cite{james},
and \cite[Appendix B]{livre}.

\subsection{Uniform spaces}
Let $X$ be a set. 
\par 
We denote by $\PP(X) := \{A: A \subset X\}$ the set of all subsets of $X$ and by $\Delta_X := \{ (x,x) : x \in X \}$ the diagonal in $X \times X$.
\par
The \emph{inverse} $\overset{-1}{U}$ of a subset $U \subset X \times X$ is the subset of $X \times X$ defined by
$\overset{-1}{U} := \{ (x,y) : (y,x) \in U \}$.
One says that $U$ is \emph{symmetric} if $\overset{-1}{U} = U$.
Note that $U \cap \overset{-1}{U}$ is symmetric for any $U \subset X \times X$.
\par  
We define the \emph{composite} $U \circ V$ of two subsets $U$ and $V$ of $X \times X$  by
$$
U \circ V := \{ (x,y): \text{ there exists  } z \in X \text{  such that  } (x,z) \in U 
\text{  and  } (z,y) \in V \}  \subset X \times X.
$$

\begin{definition}
Let $X$ be a set. A \emph{uniform structure}\index{uniform ! --- structure} on $X$ is a non--empty set $\UU$ of subsets of $X \times X$ satisfying the following conditions:
\begin{enumerate}[(UN-1)]
\item if $U \in \UU$, then $\Delta_X \subset U$;
\item if $U \in \UU$ and $U \subset V \subset X \times X$, then $V \in \UU$;
\item if $U \in \UU$ and $V \in \UU$, then $U \cap V \in \UU$;
\item if $U \in \UU$, then $\overset{-1}{U} \in \UU$;
\item if $U \in \UU$, then there exists $V \in \UU$ such that 
$V \circ V \subset U$.
\end{enumerate}
The elements of $\UU$ are then called the \emph{entourages} of the uniform structure and the set $X$ is called a \emph{uniform space}.
\end{definition}

Note that conditions (UN-3), (UN-4), and (UN-5) imply that, for any entourage $U$ there exists a symmetric entourage $V$ such that $V \circ V \subset U$.
\par
Let $U \subset X \times X$. Given a point $x \in X$, we define the subset $U[x] \subset X$ by
$U[x] := \{y \in X : (x,y) \in U \}$.
\par
If $X$ is a uniform space, there is an induced topology on $X$ characterized by the fact that the neighborhoods of an arbitrary point $x \in X$ consist of the sets $U[x]$, where $U$ runs over all entourages of $X$.
This topology is Hausdorff if and only if the intersection of all the entourages of $X$ is reduced to the diagonal $\Delta_X $.
\par
Let $\UU$ be a uniform structure on a set $X$.
\par
If $Y$ is a subset of $X$, then $\UU_Y = \{V \cap (Y \times Y) : V \in \UU\}$ is a uniform structure on $Y$, which is said to be 
\emph{induced} by $\UU$. The topology on $Y$ associated with $\UU_Y$ is the topology induced by the topology on $X$ associated with $\UU$.
\par
A subset $\BB \subset \UU$ is called a \emph{base} of $\UU$ if for each $W \in \UU$ there exists
$V \in \BB$ such that $V \subset W$. Conversely (cf. \cite[Proposition B.1.6]{livre}), if $X$ is a set and $\BB$ is a nonempty set of subsets of $X  \times X$ satisfying the following properties:
\begin{enumerate}[\rm (BU-1)]
\item 
if $V \in \BB$, then $\Delta_X \subset V$;
\item 
if $V \in \BB$ and $W \in \BB$, then there exists $U \in \BB$ 
such that $U \subset V \cap W$;
\item 
if $V \in \BB$, then there exists $W \in \BB$ such that $W 
\subset \overset{-1}{V}$;
\item 
if $V \in \BB$, then there exists $W \in \BB$ such that $W 
\circ W \subset V$,
\end{enumerate}
then $\BB$ is a base for a unique uniform structure $\UU$ on $X$.

\subsection{Uniformly continuous maps}
Let $X$ and $Y$ be uniform spaces. A map $f \colon X \to Y$ is called
\emph{uniformly continuous} if for each entourage $W$
of $Y$, there exists an entourage $V$ of $X$ such that $(f \times f)(V) \subset W$. Here $f \times f$ denotes the map from $X \times X$ into $Y \times Y$ defined by
$(f \times f)(x_1,x_2) := (f(x_1),f(x_2))$ for all $(x_1,x_2) \in X \times X$.
Every uniformly continuous map $f \colon X \to Y$ is continuous (with respect to the topologies
on $X$ and $Y$ induced by the uniform structures).
Conversely, a continuous map between uniform spaces may fail to be uniformly continuous.
However, we have the following result.

\begin{proposition}
\label{p:cont-comp-implies-unifcont}
Let $X$ and $Y$ be uniform spaces and suppose that $X$ is compact.
Then every continuous map $f \colon X \to Y$ is uniformly continuous.
 \end{proposition}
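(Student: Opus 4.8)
The plan is to prove the statement directly from the definition of uniform continuity, using the compactness of $X$ to upgrade pointwise continuity into a single entourage that works uniformly. Fix an arbitrary entourage $W$ of $Y$; the task is to produce an entourage $V$ of $X$ with $(f \times f)(V) \subset W$. The first move is to replace $W$ by a symmetric entourage that can be ``halved'': using the remark following the definition of a uniform structure, I choose a symmetric entourage $W'$ of $Y$ such that $W' \circ W' \subset W$. This symmetric, self-composing $W'$ is precisely what will let me glue two local estimates together at the very end.

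Next I exploit continuity locally. For each point $x \in X$, the set $W'[f(x)]$ is a neighborhood of $f(x)$, so by continuity of $f$ at $x$ there is an entourage $U_x$ of $X$ with $f(U_x[x]) \subset W'[f(x)]$; replacing $U_x$ by $U_x \cap \overset{-1}{U_x}$ if necessary, I may take $U_x$ symmetric. Applying (UN-5) I then pick an entourage $S_x$ with $S_x \circ S_x \subset U_x$. Each $S_x[x]$ is a neighborhood of $x$, so its interior is an open set containing $x$; hence $\{\operatorname{int}(S_x[x])\}_{x \in X}$ is an open cover of the compact space $X$. Finitely many points $x_1,\dots,x_n$ therefore suffice, giving $X = \bigcup_{i=1}^n S_{x_i}[x_i]$. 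I then set $V := \bigcap_{i=1}^n S_{x_i}$, which is an entourage of $X$ by the finite-intersection axiom (UN-3).

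It remains to verify that $(f \times f)(V) \subset W$. Given $(x,y) \in V$, the covering property supplies an index $i$ with $x \in S_{x_i}[x_i]$, that is $(x_i,x) \in S_{x_i}$; moreover $(x,y) \in V \subset S_{x_i}$. Composing these two memberships yields $(x_i,y) \in S_{x_i} \circ S_{x_i} \subset U_{x_i}$, so $y \in U_{x_i}[x_i]$, while $x \in S_{x_i}[x_i] \subset U_{x_i}[x_i]$ (since $\Delta_X \subset S_{x_i}$ forces $S_{x_i} \subset U_{x_i}$). Thus both $f(x)$ and $f(y)$ lie in $f(U_{x_i}[x_i]) \subset W'[f(x_i)]$, which means $(f(x_i),f(x)) \in W'$ and $(f(x_i),f(y)) \in W'$. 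Reversing the first pair by symmetry of $W'$ and then composing gives $(f(x),f(y)) \in W' \circ W' \subset W$, as required.

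The axiom-checking is routine; the real content, and the step I expect to be the main obstacle, is orchestrating the two independent ``halvings''—the one in $X$ producing $S_x$ from $U_x$ and the one in $Y$ producing $W'$ from $W$—so that after passing to a finite subcover the single entourage $V = \bigcap S_{x_i}$ controls $f$ along a whole composite rather than on each $U_{x_i}[x_i]$ in isolation. I must also take care that compactness is applied to the open interiors of the neighborhoods $S_x[x]$, and confirm that the symmetry of $W'$ is genuinely used (and is exactly what is needed) to merge the two estimates $f(x),f(y) \in W'[f(x_i)]$ into the single conclusion $(f(x),f(y)) \in W$.
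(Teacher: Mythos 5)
Your proof is correct. Note that the paper itself gives no argument for this proposition---it simply cites \cite[Theorem~B.2.3]{livre}---so your write-up supplies a proof where the paper outsources one; the argument you give is precisely the classical Heine--Cantor argument for uniform spaces, which is also how the cited reference proves it. All the delicate points check out: continuity at $x$ does yield an entourage $U_x$ with $f(U_x[x]) \subset W'[f(x)]$ because the neighborhood filter of $x$ consists exactly of the sets $U[x]$; the passage from $S_{x_i} \subset S_{x_i}\circ S_{x_i} \subset U_{x_i}$ via the diagonal is valid, so $x \in U_{x_i}[x_i]$ as claimed; and the final merge of $(f(x_i),f(x)) \in W'$ and $(f(x_i),f(y)) \in W'$ into $(f(x),f(y)) \in W'\circ W' \subset W$ uses the symmetry of $W'$ exactly where it is needed. (Two harmless remarks: the symmetrization of $U_x$ is never actually used, and no Hausdorff hypothesis is needed anywhere, consistent with the statement.)
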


\begin{proof}
See e.g. \cite[Theorem B.2.3]{livre}.
\end{proof}

 Let $X$ and $Y$ be uniform spaces.
One says that a map $f \colon X \to Y$ is a \emph{uniform isomorphism} if $f$ is bijective and both $f$ and $f^{-1}$ are uniformly continuous.
One says that a map $f \colon X \to Y$ is a \emph{uniform embedding}
if $f$ is injective and induces a uniform isomorphism between $X$ and $f(X) \subset Y$. 

\begin{proposition}
\label{p:inj-hom-unif-emb}
Let $X$ and $Y$ be uniform spaces with $X$ compact and $Y$ Hausdorff.
Suppose that $f \colon X \to Y$ is a continuous injective map.
Then $f$ is a uniform embedding.  
\end{proposition}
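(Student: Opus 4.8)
The plan is to reduce the whole statement to two applications of Proposition~\ref{p:cont-comp-implies-unifcont}. First I would pass from $Y$ to the image $f(X)$, equipped with the uniform structure $\UU_{f(X)}$ induced by that of $Y$. Recall that the topology on $f(X)$ associated with $\UU_{f(X)}$ coincides with the subspace topology inherited from $Y$; in particular $f(X)$ is Hausdorff, being a subspace of the Hausdorff space $Y$. Moreover, since $f$ is continuous and $X$ is compact, the image $f(X)$ is a compact subset of $Y$, so $f(X)$ is itself a compact uniform space. Because $f$ is injective, it induces a bijection $g \colon X \to f(X)$ given by $g(x) := f(x)$, and to prove the proposition it suffices to show that $g$ is a uniform isomorphism.

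Next I would verify that $g$ is uniformly continuous. As $f \colon X \to Y$ is continuous and $X$ is compact, Proposition~\ref{p:cont-comp-implies-unifcont} shows that $f$ is uniformly continuous. Now every entourage of $f(X)$ has the form $W \cap (f(X) \times f(X))$ with $W$ an entourage of $Y$; choosing an entourage $V$ of $X$ with $(f \times f)(V) \subset W$ and noting that $(g \times g)(V) = (f \times f)(V)$ automatically lies in $f(X) \times f(X)$, I obtain $(g \times g)(V) \subset W \cap (f(X) \times f(X))$. Hence $g$ is uniformly continuous.

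The remaining and more delicate point is the uniform continuity of $g^{-1} \colon f(X) \to X$. The key observation is that $g$ is a continuous bijection from the compact space $X$ onto the Hausdorff space $f(X)$, and such a map is automatically a homeomorphism; consequently $g^{-1}$ is continuous. Since $f(X)$ is compact, I can then invoke Proposition~\ref{p:cont-comp-implies-unifcont} a second time, this time with domain $f(X)$, to conclude that the continuous map $g^{-1}$ is in fact uniformly continuous. Combining the two halves, $g$ is a uniform isomorphism onto $f(X)$, so $f$ is a uniform embedding, as required.

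I expect the main obstacle to be the handling of $g^{-1}$: one must first secure its topological continuity through the compact-to-Hausdorff homeomorphism argument before uniform continuity can even be discussed, and one must realize that $f(X)$ is compact so that Proposition~\ref{p:cont-comp-implies-unifcont} applies in this second direction as well. By contrast, the uniform continuity of $g$ itself is routine once Proposition~\ref{p:cont-comp-implies-unifcont} and the explicit description of the induced uniform structure $\UU_{f(X)}$ are in hand.
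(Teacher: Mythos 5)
Your proof is correct. The paper gives no argument of its own here---it simply cites \cite[Theorem~B.2.5]{livre}---and your proof is exactly the standard one that reference supplies: pass to the image $f(X)$ with its induced uniform structure (compact, Hausdorff), get uniform continuity of $f$ from Proposition~\ref{p:cont-comp-implies-unifcont}, obtain continuity of the inverse from the fact that a continuous bijection from a compact space onto a Hausdorff space is a homeomorphism, and then apply Proposition~\ref{p:cont-comp-implies-unifcont} a second time on the compact space $f(X)$ to upgrade that continuity to uniform continuity.
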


\begin{proof}
See e.g.~\cite[Theorem B.2.5]{livre}. 
\end{proof}

\subsection{The discrete uniform structure}
Let $X$ be a set.
The \emph{discrete} uniform structure on $X$ is the uniform structure whose  entourages 
are all subsets of  $ X \times  X$ containing the diagonal $\Delta_X$.
It is the finer uniform structure on $X$.
The topology associated with the discrete uniform structure on $X$ is the discrete one.
If $X$ is equipped with its discrete uniform structure and $Y$ is an arbitrary uniform space,
then every map $f \colon X \to Y$ is uniformly continuous.

\subsection{The prodiscrete uniform structure}
 Let $A$ and $E$ be two sets.
For $e \in E$, we denote by $\pi_e \colon A^E \to A$ the projection map $x \mapsto x(e)$.
Let us equip $A$ with its discrete uniform structure.  
  The \emph{prodiscrete} uniform structure on $A^E$ is the coarsest uniform structure on $A^E$ 
  making all projection maps $\pi_e$ ($e \in E$)   uniformly continuous.
 A base of entourages for the prodiscrete uniform structure on $A^E$ is given by the sets 
$W_F \subset A^E \times A^E$, where
\begin{equation}
\label{e:base-pus}
W_F := \{(x,y) \in A^E \times A^E : x\vert_F = y\vert_F\},
\end{equation}
where $F$ runs over all finite subsets of $E$.
\par
If $Y$ is a uniform space and $\BB$ is a base of entourages of $Y$,
a map $f \colon A^E \to Y$ is uniformly continuous if and only if it satisfies the following condition:
for every  $B \in \BB$, there exists a finite subset $F \subset E$ such that if $x,y \in A^E$ coincide on $F$, then $(f(x),f(y)) \in B$.
\par 
The topology associated with the prodiscrete uniform structure on $A^E$ is the prodiscrete topology
(see~Example~\ref{ex:prodiscrete-top}).

\subsection{Uniformly continuous and expansive actions} 
Let $X$ be a uniform space equipped with an action of a monoid   $M$.  
We consider the diagonal action of $M$ on $X \times X$ defined by
$$
m(x,y) = (mx,my)
$$
for all $m \in M$ and $x,y \in X$.

One says that the action of $M$ on $X$ is \emph{uniformly continuous} if the  map   $x \mapsto mx$ is uniformly continuous on $X$ for each $m \in M$.
This is equivalent to saying that, for each $m \in M$,
the set  $m^{-1}(V) := \{(x,y) \in X \times X: m(x,y) \in V\}$ is an entourage of $X$ for every entourage $V$ of $X$.  

The action of $M$ on $X$ is said to be \emph{expansive} if
there exists an entourage $W_0$ of $X$ such that
\begin{equation}
\label{e;expansive}
\bigcap_{m \in M} m^{-1}(W_0) = \Delta_X,
\end{equation}
where $\Delta_X $ is the diagonal in $X \times X$.
Equality \eqref{e;expansive} means that if $x,y \in X$ satisfy $(m x, m y) \in W_0$ for 
all $m \in M$, then $x = y$.
An entourage $W_0$ satisfying \eqref{e;expansive} is then called an \emph{expansivity entourage} 
for the action of $M$ on $X$.

Our basic example of a uniformly continuous and expansive action is provided by the following:

\begin{proposition} 
\label{p;UCE}
Let $M$ be a monoid and let $A$ be a set. 
Then the $M$-shift on $A^M$ is uniformly continuous and expansive with respect to the prodiscrete uniform structure on $A^M$.
\end{proposition}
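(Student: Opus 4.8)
The plan is to work throughout with the base of entourages $W_F$ (for $F$ a finite subset of $M$) given in~\eqref{e:base-pus}, since both uniform continuity of the action and the verification of an expansivity entourage can be checked on a base. The only computational input is the defining identity of the shift, $(mx)(m') = x(m'm)$, so the single point requiring attention will be keeping track of the fact that the shift is built from the \emph{right} multiplication $R_m$.

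First I would establish uniform continuity. Fixing $m \in M$, I must show that the map $x \mapsto mx$ is uniformly continuous, that is, that for every basic entourage $W_F$ there is a finite subset $F' \subset M$ such that $x\vert_{F'} = y\vert_{F'}$ forces $(mx)\vert_F = (my)\vert_F$. Taking $F' := Fm = \{m'm : m' \in F\}$, which is finite, the identity $(mx)(m') = x(m'm)$ shows that agreement of $x$ and $y$ on $F'$ yields $(mx)(m') = x(m'm) = y(m'm) = (my)(m')$ for every $m' \in F$. Phrased in terms of preimages this reads $W_{Fm} \subset m^{-1}(W_F)$, so by upward closure (UN-2) the set $m^{-1}(W_F)$ is an entourage; since every entourage contains some $W_F$, the action is uniformly continuous in the sense of the definition given above.

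Next I would exhibit an expansivity entourage, the natural candidate being $W_0 := W_{\{1_M\}}$, i.e. the set of pairs $(x,y)$ with $x(1_M) = y(1_M)$. For $m \in M$ one has $m(x,y) = (mx,my) \in W_0$ precisely when $(mx)(1_M) = (my)(1_M)$, and since $(mx)(1_M) = x(1_M m) = x(m)$ this means $x(m) = y(m)$. Hence a pair $(x,y)$ lies in $\bigcap_{m \in M} m^{-1}(W_0)$ if and only if $x(m) = y(m)$ for all $m \in M$, that is, if and only if $x = y$; thus $\bigcap_{m \in M} m^{-1}(W_0) = \Delta_{A^M}$ and $W_0$ is an expansivity entourage.

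There is essentially no hard step: the whole argument reduces to the identity $(mx)(m') = x(m'm)$ together with the finiteness of $Fm$. If I had to name the one place where care is needed, it is the bookkeeping of the multiplication order. Because the shift uses $R_m$, uniform continuity requires translating the test set $F$ on the \emph{right} by $m$ (yielding $Fm$ rather than $mF$), and expansivity works precisely because evaluating $mx$ at $1_M$ recovers exactly the coordinate $x(m)$, so that the single coordinate $1_M$, together with all of its shifts, already sees every coordinate of $A^M$.
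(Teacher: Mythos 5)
Your proof is correct and follows essentially the same route as the paper's: uniform continuity via the observation that agreement on $Fm$ forces agreement of the shifted configurations on $F$, and expansivity via the entourage $W_0 = W_{\{1_M\}}$ together with the identity $(mx)(1_M) = x(m)$. The only difference is presentational — you make the base-of-entourages bookkeeping and the right-multiplication caveat explicit, which the paper leaves implicit.
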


\begin{proof}
Let $m \in M$.
Let $F \subset M$.
If two configurations $x,y \in A^M$ coincide on $F m$ then $mx$ and $m y$ coincide on $F$.
As $F m$ is finite whenever $F$ is finite,
we deduce  that the map $x \mapsto m x$ is uniformly continuous on $A^M$.
This shows  that  the $M$-shift is uniformly continuous on $A^M$.
\par 
 Expansiveness of the $M$-shift follows  from  the fact that the entourage $W_0$ of $A^M$ 
defined by
$$
W_0 := \{(x,y) \in A^M \times A^M: x(1_M) = y(1_M)\}
$$
is an expansivity entourage. 
Indeed, if $x, y \in A^M$ satisfy $(m x, m y) \in W_0$ for all $m \in M$, then
$$
x(m) = mx(1_M) = m y (1_M) = y(m) 
$$  
for all $m \in M$, and hence $x = y$.
 \end{proof}

 \subsection{The Hausdorff-Bourbaki uniform structure}
Let $X$ be a set.
Suppose  that $R$ is a subset of $X \times X$.
We shall use the following notation. 
 Given a subset $Y \subset X$, we set
\begin{equation}
\label{e:def-im-subset-by-rel}
R[Y] := \bigcup_{y \in Y} R[y] := \{ x \in X : (x,y) \in R \text{  for some  } y \in Y \}.
\end{equation}
 Then we define the subset $\widehat{R} \subset \PP(X) \times \PP(X)$ by
\begin{equation}
\widehat{R} := \left\{(Y,Z) \in \PP(X) \times \PP(X) : Y \subset R[Z] \text{  and  } Z \subset R[Y] \right\}.
\end{equation}

Let $\UU$ be a uniform structure on $X$. One shows (cf. \cite[Proposition B.4.1]{livre}) that the set $\{\widehat{V} : V \in \UU\}$ is a base for a uniform structure on $\PP(X)$, called the
\emph{Hausdorff-Bourbaki uniform structure} on $\PP(X)$. The topology associated with this uniform structure is called the \emph{Hausdorff-Bourbaki topology}.

We shall need the following two results. 

 \begin{proposition}
\label{p:grom-haus-sep-closed}
Let $X$ be a uniform space.  Let $Y$ and $Z$ be closed subsets of $X$.
Suppose that there is a net $(T_i)_{i \in I}$ of subsets of $X$ which converges to both $Y$ and $Z$ with respect to the Hausdorff-Bourbaki topology on $\PP(X)$.
Then one has $Y = Z$.
In particular, the topology
induced by the Hausdorff-Bourbaki topology on the set of closed subsets of $X$ is Hausdorff. 
\end{proposition}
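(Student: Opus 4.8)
The plan is to prove $Y \subseteq Z$; the reverse inclusion $Z \subseteq Y$ then follows by exchanging the roles of $Y$ and $Z$, and since both sets are closed this yields $Y = Z$. To establish $Y \subseteq Z$, I would fix $y \in Y$ and show that $y$ lies in the closure of $Z$; because $Z$ is closed, this gives $y \in Z$. Here I would use that the sets $W[y]$, where $W$ ranges over the entourages of $X$, form a base of neighborhoods of $y$, so it suffices to verify that $W[y] \cap Z \neq \emptyset$ for every entourage $W$.

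Next I would fix an entourage $W$ and, invoking the remark that conditions (UN-3), (UN-4), (UN-5) guarantee a symmetric entourage $V$ with $V \circ V \subset W$, choose such a $V$. Since the Hausdorff-Bourbaki topology has the base $\{\widehat{V} : V \in \UU\}$, convergence of $(T_i)$ to a set $S$ means that for each entourage $V$ one eventually has $(T_i,S) \in \widehat{V}$, that is, $T_i \subset V[S]$ and $S \subset V[T_i]$. Applying this to $(T_i) \to Y$ yields an index $i_0$ with $Y \subset V[T_i]$ for all $i \ge i_0$, and applying it to $(T_i) \to Z$ yields an index $i_1$ with $T_i \subset V[Z]$ for all $i \ge i_1$. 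As $I$ is directed, I would pick a single $i$ with $i \ge i_0$ and $i \ge i_1$, so that both $Y \subset V[T_i]$ and $T_i \subset V[Z]$ hold for this one index.

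The crux is then the chaining step. From $y \in Y \subset V[T_i]$ there is $t \in T_i$ with $(y,t) \in V$, and from $t \in T_i \subset V[Z]$ there is $z \in Z$ with $(t,z) \in V$; here the symmetry of $V$ removes any ambiguity about the order of the pairs. Composing gives $(y,z) \in V \circ V \subset W$, so $z \in W[y] \cap Z$ and this intersection is nonempty. Since $W$ was arbitrary, $y$ lies in the closure of $Z$, hence in $Z$. This proves $Y \subseteq Z$, and symmetrically $Z \subseteq Y$, whence $Y = Z$. I expect the only delicate point to be the bookkeeping with the order of pairs in the entourages and the convention for $V[\,\cdot\,]$; working throughout with a symmetric $V$ makes the directions irrelevant and dissolves this difficulty.

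For the final assertion I would appeal to the standard characterization that a topological space is Hausdorff if and only if every net has at most one limit. A net of closed subsets of $X$ is in particular a net of subsets of $X$, so the first part shows it cannot converge to two distinct closed sets; thus limits of nets are unique in the subspace of closed subsets, and this subspace is therefore Hausdorff.
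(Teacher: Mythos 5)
Your proof is correct. The paper offers no argument of its own for this proposition --- it simply cites \cite[Proposition B.4.3]{livre} --- and your chaining argument (pass to a symmetric entourage $V$ with $V \circ V \subset W$, use directedness to get a single index $i$ with $Y \subset V[T_i]$ and $T_i \subset V[Z]$, and conclude that each $y \in Y$ lies in $\overline{Z} = Z$) is precisely the standard proof of the cited result, including the correct reduction of the final Hausdorff claim to uniqueness of net limits.
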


\begin{proof}
See e.g.~\cite[Proposition B.4.3]{livre}.
\end{proof}

\begin{proposition}
\label{p:uc-implies-gh-uc}
Let $X$ and $Y$ be uniform spaces and let $f \colon X \to Y$ be a uniformly continuous map.
Then the map
$f_* \colon \PP(X) \to \PP(Y)$ which sends each subset $A \subset X$ to its image $f(A) \subset Y$ is uniformly continuous with respect to the Hausdorff-Bourbaki uniform structures on $\PP(X)$ 
and $\PP(Y)$.
\end{proposition}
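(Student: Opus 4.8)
The plan is to verify uniform continuity of $f_*$ straight from the definition, exploiting the fact recalled in the excerpt that the sets $\widehat{W}$, where $W$ ranges over the entourages of $Y$, form a base of the Hausdorff-Bourbaki uniform structure on $\PP(Y)$. Since a map into a uniform space is uniformly continuous as soon as the preimage condition holds for a base of the target, it suffices to show that for every entourage $W$ of $Y$ there is an entourage $V$ of $X$ with $(f_* \times f_*)(\widehat{V}) \subset \widehat{W}$.

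Given an entourage $W$ of $Y$, the natural choice is to invoke the uniform continuity of $f$ to produce an entourage $V$ of $X$ satisfying $(f \times f)(V) \subset W$. I would then claim that this $V$ does the job, namely that $(A,B) \in \widehat{V}$ implies $(f(A),f(B)) \in \widehat{W}$.

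To prove the claim, suppose $(A,B) \in \widehat{V}$, that is, $A \subset V[B]$ and $B \subset V[A]$. To check $f(A) \subset W[f(B)]$, take any $a \in A$; since $A \subset V[B]$, there is $b \in B$ with $(a,b) \in V$, whence $(f(a),f(b)) \in W$ and therefore $f(a) \in W[f(b)] \subset W[f(B)]$. The inclusion $f(B) \subset W[f(A)]$ follows in the same way from $B \subset V[A]$. Hence $(f(A),f(B)) \in \widehat{W}$, which is precisely $(f_*(A),f_*(B)) \in \widehat{W}$, completing the verification.

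I do not anticipate any genuine obstacle here: the entire content is unwinding the definitions of $\widehat{R}$ and of the relation-image notation $R[\,\cdot\,]$. The single point requiring mild care is the direction of the pairs appearing in $V$ and in $W$, but the argument is visibly symmetric in the two coordinates, so both required inclusions flow from the one condition $(f \times f)(V) \subset W$ without any need to pass to a symmetric entourage.
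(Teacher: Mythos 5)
Your proof is correct: the paper itself gives no argument for this proposition, merely citing \cite[Proposition B.4.6]{livre}, and your direct verification---pulling back a basic entourage $\widehat{W}$ of $\PP(Y)$ to $\widehat{V}$ via $(f\times f)(V)\subset W$ and unwinding the definitions of $\widehat{R}$ and $R[\,\cdot\,]$---is exactly the standard proof given in that reference. Your observation that both inclusions $f(A)\subset W[f(B)]$ and $f(B)\subset W[f(A)]$ follow from the single condition $(f\times f)(V)\subset W$, with no need for a symmetric entourage, is also accurate.
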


\begin{proof}
See e.g.~\cite[Proposition B.4.6]{livre}.
\end{proof}

\section{Cellular automata over monoids}
\label{s:cellular}

\subsection{Equivalent definitions and examples}

We recall that, if we are given  a monoid $M$ and  a set $A$,   the configuration set $A^M$ is equipped with the prodiscrete uniform structure and the shift action of $M$.

 \begin{definition}
\label{def:ca}
Let $M$ be a monoid and $A$ a set.
A map $\tau \colon A^M \to A^M$ is called a \emph{cellular automaton}
over the monoid $M$ and the alphabet $A$ 
if it is  equivariant  and uniformly continuous. 
 \end{definition}

\begin{remark}
The equivariance of $\tau \colon A^M \to A^M$ with respect to the shift action of $M$ means that one has
$\tau(m x) = m \tau(x)$, that is,
$$
\tau(x \circ R_m) = \tau(x) \circ R_m
$$
for all $x \in A^M$ and $m \in M$.
On the other hand, the uniform continuity of $\tau$
with respect tot the prodiscrete uniform structure  means that, for every   $m \in M$, there exists a finite subset $F \subset M$ such that if two configurations $x,y \in A^M$ coincide on $F$ then the configurations $\tau(x)$ and
$\tau(y)$ take the same value at $m$.
\end{remark}

\begin{remark}
When $M$ is a group, the definition of a cellular automaton over $M$ given above is equivalent to the one in \cite{livre},
except that in \cite{livre} we use the action of $M$ on the configuration space $A^M$  given by $(m,x) \mapsto x \circ L_{m^{-1}}$
instead of $(m,x) \mapsto x \circ R_m$.
So, strictly speaking,
a cellular automaton over a group $M$ according to the definition above is a cellular automaton over the opposite group of $M$ in the sense of the definition in \cite{livre}.  
\end{remark}

\begin{proposition}
\label{p:carc-ca-finite-alphabet}
Let $M$ be a monoid and $A$ a finite set.
Then a map $\tau \colon A^M \to A^M$  
 is a cellular automaton
if and only if it is equivariant  and continuous.   
\end{proposition}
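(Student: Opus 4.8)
The plan is to observe that both conditions share the equivariance requirement, so the entire content of the statement is the equivalence, for maps $\tau \colon A^M \to A^M$, between uniform continuity and plain continuity when the alphabet $A$ is finite. One implication is free: by definition a cellular automaton is equivariant and uniformly continuous, and every uniformly continuous map between uniform spaces is continuous (as recalled just before Proposition~\ref{p:cont-comp-implies-unifcont}). Hence if $\tau$ is a cellular automaton, it is certainly equivariant and continuous.

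For the nontrivial direction I would start from the hypothesis that $\tau$ is equivariant and continuous and upgrade continuity to uniform continuity. The key external input is that $A$ is finite: by the Tychonoff product theorem, $A^M$ equipped with the prodiscrete topology is compact (this is exactly the remark in Example~\ref{ex:prodiscrete-top}). I would then regard the source and target $A^M$ as the same uniform space, namely $A^M$ with its prodiscrete uniform structure, whose associated topology is the prodiscrete topology. Since $\tau$ is assumed continuous and its domain is a compact uniform space, Proposition~\ref{p:cont-comp-implies-unifcont} applies verbatim and yields that $\tau$ is uniformly continuous. Combined with the unchanged equivariance hypothesis, this shows $\tau$ is a cellular automaton, completing the converse.

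There is essentially no obstacle here beyond correctly invoking the two cited facts: the compactness of $A^M$ for a finite alphabet and the automatic uniform continuity of continuous maps out of a compact uniform space. The only point requiring a moment's care is making explicit that the topology underlying the prodiscrete uniform structure on $A^M$ is precisely the prodiscrete topology with respect to which continuity is being assumed, so that Proposition~\ref{p:cont-comp-implies-unifcont} is genuinely applicable; once this identification is noted, the argument is immediate.
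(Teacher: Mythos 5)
Your proposal is correct and follows exactly the paper's own argument: the forward direction comes from uniform continuity implying continuity, and the converse from the Tychonoff compactness of $A^M$ for finite $A$ together with Proposition~\ref{p:cont-comp-implies-unifcont}. Nothing is missing; your extra remark identifying the topology of the prodiscrete uniform structure with the prodiscrete topology is a sound (if implicit in the paper) point of care.
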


\begin{proof}
The necessity follows  (without any hypothesis on $A$) 
from the fact that  every uniformly continuous map is continuous.
The converse implication  is a consequence of Proposition~\ref{p:cont-comp-implies-unifcont} since $A^M$ is compact by Tychonoff's theorem whenever $A$ is finite.
\end{proof}

 When the monoid $M$ is finite, the result of Proposition~\ref{p:carc-ca-finite-alphabet} remains trivially 
valid even  for an infinite alphabet $A$ because, in that case, the prodiscrete uniform structure on $A^M$ is just the discrete uniform structure, so that every map from $A^M$ into itself is uniformly continuous.
 However, when $M$ is infinite, the following example shows that Proposition~\ref{p:carc-ca-finite-alphabet} becomes false if we remove the hypothesis that $A$ is infinite
(cf.~\cite[Section~4]{ccs-curtis-hedlund} and \cite[Example~1.8.2]{livre}).

\begin{example}
Suppose that $M$ is an infinite monoid and take $A := M$.
Consider the map $f \colon A^M \to A^M$ defined by
$$
f(x)(m) := x(R_m(x(m)))
$$ 
for all $x \in A^M$ and $m \in M$.
Then, for all $m, m' \in M$, we have that
\begin{align*}
f(m x)(m') 
&= m x(R_{m'} ((m x))(m')) \\ 
&= m x(R_{m'} ( x(m' m) )) \\
&=  x(   x(m' m) m' m) \\ 
& = x( R_{m' m}(  x(m' m) )) \\
& = f(x)(m' m) \\ 
& = (m f(x))(m')
\end{align*}
and hence $f(m x) = m f(x)$.
This shows that $f$ is equivariant.
\par
Moreover, $f$ is continuous.
In other words,   for all $x \in A^M$ and $m \in M$, there exists a finite subset $F \subset M$ such that
if a configuration $y \in A^M$ coincides with $x$ on $F$, then $\tau(x)$ and $\tau(y)$ take the same value at $m$.
Indeed, we can take for example 
$$
F := \{x(m)\} \cup \{R_m(x(m))\}.
$$
\par
However, $f$ is not a cellular automaton.
To see this, suppose that we are given a finite subset  $F \subset M$.
Choose an element $m_0 \in M \setminus (F \cup \{1_M\})$ (this is possible because $M$ is assumed to be infinite)
and consider the configurations $x,y \in A^M$ respectively defined by
 $$
x(m) := 
\begin{cases}
1_M & \mbox{ if \ } m = m_0\\
 m_0 & \mbox{ if } m \in M \setminus \{m_0\},
\end{cases}
$$
and
$$
y(m) := m_0 \quad \text{for all } m \in M.
$$
Note that $x$ and $y$ coincide on $M \setminus \{m_0\}$ and hence on $F$.
However, we have that
\[
f(x)(1_M) = x(x(1_M)) = x(m_0) = 1_M
\]
while
\[
f(y)(1_M) = y(y(1_M)) = y(m_0) = m_0,
\]
so that $f(x)(1_M) \neq f(y)(1_M)$. 
It follows that there is
no finite subset $F \subset M$ with the property  that
if two configurations in $A^M$ coincide on $F$,
then their images by $f$  take the same value at $1_M$. 
 This shows that $f$ is not uniformly continuous and hence that 
$f$ is not a cellular automaton.  
 \end{example}

The following algebraic characterization of cellular automata on monoids
  (with finite or infinite alphabet)
 extends the classical Curtis-Hedlund-Lyndon theorem \cite{hedlund} as well as its generalization in~\cite{ccs-curtis-hedlund}
(see also~\cite[Theorem~1.9.1]{livre}).

 \begin{theorem}
\label{t:ca-iff-equiv-unif-cont}
Let $M$ be a monoid and  $A$ a set. 
Let $\tau \colon A^M \to A^M$ be a map.
Then the following conditions are equivalent:
\begin{enumerate}[\rm (a)]
\item
$\tau$ is a cellular automaton;
\item
there exists a finite subset $S \subset M$ and a map $\mu \colon A^S \to A$ 
such that
\begin{equation}
\label{e:cell-aut}
\tau(x)(m) = \mu\left((mx)\vert_S\right)
\end{equation}
for all $x \in A^M$ and $m \in M$.
(Recall that $(mx)\vert_S \in A^S$ denotes the restriction of the configuration $m x \in A^M$ to 
  $S \subset M$.)
  \end{enumerate}
\end{theorem}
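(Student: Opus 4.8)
The plan is to prove the two implications separately. The implication (b) $\Rightarrow$ (a) will be a direct verification, while (a) $\Rightarrow$ (b) is the substantive direction, where uniform continuity and equivariance must be combined; throughout I would use the shift identities $mx(m') = x(m'm)$ and $m_1(m_2 x) = (m_1 m_2)x$.

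For (b) $\Rightarrow$ (a), suppose we are given a finite set $S \subset M$ and a map $\mu \colon A^S \to A$ satisfying \eqref{e:cell-aut}. First I would check equivariance: for $n \in M$ and $x \in A^M$ I compute $\tau(nx)(m) = \mu\big((m(nx))\vert_S\big)$ and rewrite $m(nx) = (mn)x$, so that $\tau(nx)(m) = \mu\big(((mn)x)\vert_S\big) = \tau(x)(mn) = (n\tau(x))(m)$, the last equality being the definition of the shift; hence $\tau(nx) = n\tau(x)$. Next I would verify uniform continuity using the characterization of uniformly continuous maps into $A^M$ recalled in the subsection on the prodiscrete uniform structure, applied to the base entourages $W_G$ of \eqref{e:base-pus}: it suffices to show that for every finite $G \subset M$ there is a finite $F \subset M$ with $x\vert_F = y\vert_F \Rightarrow \tau(x)\vert_G = \tau(y)\vert_G$. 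Since $(mx)\vert_S$ is determined by the values of $x$ on $Sm = \{sm : s \in S\}$, the value $\tau(x)(m)$ depends only on $x\vert_{Sm}$, so taking the finite set $F := SG = \{sg : s \in S,\ g \in G\}$ works, because $Sm \subset SG$ for each $m \in G$.

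For (a) $\Rightarrow$ (b), the idea is to read off the local rule at the single coordinate $1_M$ and then propagate it by equivariance. Assuming $A \neq \emptyset$ (the case $A = \emptyset$ is vacuous, since then $A^M = \emptyset$), I would apply uniform continuity of $\tau$ to the base entourage $W_{\{1_M\}}$ of the target to obtain a finite subset $S \subset M$ such that $x\vert_S = y\vert_S$ implies $\tau(x)(1_M) = \tau(y)(1_M)$; that is, $\tau(x)(1_M)$ depends only on $x\vert_S$. Every $p \in A^S$ extends to some configuration, so this defines a map $\mu \colon A^S \to A$ by $\mu(x\vert_S) := \tau(x)(1_M)$, well defined precisely because of the coincidence property of $S$. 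Finally, for arbitrary $m \in M$ I would recover \eqref{e:cell-aut} by the chain $\tau(x)(m) = (m\tau(x))(1_M) = \tau(mx)(1_M) = \mu\big((mx)\vert_S\big)$, where the first equality uses the shift identity $(m\tau(x))(1_M) = \tau(x)(1_M\cdot m) = \tau(x)(m)$, the second uses equivariance $m\tau(x) = \tau(mx)$, and the third uses the definition of $\mu$.

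The main obstacle — and the reason uniform continuity rather than mere continuity is required — is the existence of a single finite memory set $S$ valid for all coordinates simultaneously. Continuity alone would only furnish, for each $m$, a possibly $m$-dependent finite set, exactly as illustrated by the equivariant continuous non-cellular map in the example preceding the theorem. Uniform continuity supplies a uniform $S$ already at the coordinate $1_M$, and equivariance then transports this single local rule to every coordinate; verifying this interaction correctly is the crux of the argument.
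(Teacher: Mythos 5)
Your proposal is correct and follows essentially the same route as the paper's own proof: extracting the local rule $\mu$ at the coordinate $1_M$ from uniform continuity (via the entourage $W_{\{1_M\}}$) and propagating it by equivariance for (a)~$\Rightarrow$~(b), and for (b)~$\Rightarrow$~(a) verifying equivariance by the shift identity $m(nx)=(mn)x$ and uniform continuity from the fact that $\tau(x)(m)$ depends only on $x\vert_{Sm}$, with your choice $F:=SG$ just making explicit the finite set the paper uses implicitly. Your closing remark about why uniform (rather than mere) continuity is needed matches the role of the paper's preceding counterexample.
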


If $S \subset M$ and $\mu \colon A^S \to A$ are as in the statement of  the above theorem, one  says that $S $ is a \emph{memory set} for $\tau$ and that $\mu$ is the associated \emph{local defining map}.
Note that $\mu$ is entirely determined by $\tau$ and $S$, 
since the restriction map $A^M \to A^S$ is surjective and $\mu(x\vert_S) = \tau(x)(1_M)$ for all $x \in A^M$ by \eqref{e:cell-aut}.
 
\begin{proof}[Proof of Theorem \ref{t:ca-iff-equiv-unif-cont}]
Suppose first that $\tau \colon A^M \to A^M$ is a cellular automaton.
 Since $\tau$ is uniformly continuous for the prodiscrete uniform structure on $A^M$, there exists a finite subset
$S \subset M$ such that,
if two configurations in $A^M$ coincide on $S$, then their images by $\tau$ take the same value at $1_M$.
In other words, there is a map $\mu \colon A^S \to A$ such that
\[
\tau(x)(1_M) = \mu(x\vert_S)
\]
for all $x \in A^M$.
By using the $M$-equivariance of $\tau$, we get
\[
\begin{split}
\tau(x)(m)  & = \left(m\tau(x)\right)(1_M)\\
& = \tau(mx)(1_M)\\
& = \mu\left((mx)\vert_{S}\right)
\end{split}
\]
for all $m \in M$. 
This shows that (a) implies (b).
\par
Conversely, suppose that $S \subset M$ is a finite subset and $\mu \colon A^S \to A$ is a map   satisfying~\eqref{e:cell-aut}. 
Let $m,m' \in M$ and $x \in A^M$. We have that
\[
\begin{split}
\tau(mx)(m') & = \mu\left((m'mx)\vert_S\right)\\ 
& = \tau(x)(m'm)\\
&  = \left(m \tau(x)\right)(m').
\end{split}
\]
It follows that $\tau(mx) = m\tau(x)$ for all $m \in M$ and $x \in A^M$.
This shows that $\tau$ is $M$-equivariant.
On the other hand,
we deduce  from  \eqref{e:cell-aut} that if two configurations $x,y \in A^M$ coincide on 
$Sm$ for some $m \in M$, then $\tau(x)$ and $ \tau(y)$ take the same value at $m$.
As the set $Sm$ is finite for each $m \in M$,
 we conclude that  $\tau$ is uniformly continuous with respect to the prodiscrete uniform structure on $A^M$.
This shows that  $\tau$ is a cellular automaton.
\end{proof}

Let us give  some  examples of cellular automata that can be defined over an arbitrary monoid $M$.

 \begin{examples}
\label{ex}
(a) Let $A$ be a set. Then the  identity map $\tau = \Id_{A^M} \colon A^M \to A^M$ is a cellular automaton. 
 We can take $S := \{1_M\}$
as a memory set for $\tau$ and the identity map $\mu := \Id_A \colon A^S\equiv A \to A$ as the associated
local defining map.
\par
(b) Let $A$ be a set. Fix a map  $f \colon A \to A$.    
Then the   map $\tau \colon A^M \to A^M$, defined by   $\tau(x) := f \circ x$ 
for all $x \in A^M$,   is a cellular automaton.
Here we can take again $S := \{1_M\}$ as a memory set for $\tau$
and $\mu := f \colon A^S\equiv A \to A$ as the associated local defining map.
Observe that if $f$ is the identity map on $A$ then $\tau$ is the identity map on $A^M$.
\par
(c)   Let $A$ be a set. Fix an element $m \in M$.
Then the map $\tau  \colon A^M \to A^M$, defined by
  $\tau(x) := x \circ L_{m}$ for all
$x \in A^M$,     is a cellular automaton.
 Here we can take $S := \{m\}$ as a memory set for $\tau$ and the identity map $\mu := \Id_A \colon A^S\equiv A \to A$ as the associated local defining map.
Note that if $m = 1_M$ then   $\tau $ is the identity map on $A^M$.
\par
(d) Let $A = \{0,1\}$ and $S_0 \subset M$ be a finite subset.
The map  $\tau \colon A^M \to A^M$, defined by   
\[
\tau(x)(m) := \begin{cases}
1 & \mbox{ if } \sum_{s\in S_0} x(sm) > |S_0|/2\\
x(m) & \mbox{ if } \sum_{s\in S_0} x(sm) = |S_0|/2\\
0 & \mbox{ otherwise}
\end{cases}\]
for all $x \in A^M$ and $m \in M$, is a cellular automaton.
It is called the \emph{majority action} cellular automaton on $M$ relative to $S_0$.
A memory set for $\tau$ is  $S:=S_0 \cup\{1_M\}$ and the associated local defining map $\mu \colon A^S \to A$ is given by
\[
\mu(y) := \begin{cases}
1 & \mbox{ if } \sum_{s\in S_0} y(s) > |S_0|/2\\
y(1_M) & \mbox{ if } \sum_{s\in S_0} y(s) = |S_0|/2\\
0 & \mbox{ otherwise}
\end{cases}\]
for all $y \in A^S$.  
  \par
(e) Let $A$ be a field and $S_0 \subset M$ a  finite subset whose cardinality
$|S_0|$ is not an integral  multiple of the characteristic of $A$.
Then the  map  $\tau \colon A^M \to A^M$, defined by 
\[
\tau(x)(m) := x(m) - |S_0|^{-1} \sum_{s \in S_0} x(sm)
\]
for all $x \in A^M$ and $m \in M$, is a cellular automaton. It is called the
\emph{combinatorial laplacian}   on $M$ with coefficients in $A$  relative to $S_0$.
We can take $S:= S_0 \cup \{1_M\}$ as a memory set for $\tau$ and $\mu \colon A^S \to A$ defined by
\[
\mu(y) := y(1_M) - |S_0|^{-1} \sum_{s \in S_0} y(s)
\]
for all $y \in A^S$,   as the associated local defining map.
 \end{examples}

 \subsection{Composition of cellular automata}
Given a monoid $M$ and a set $A$, we denote by $\CA(M;A)$ the set consisting of all cellular automata 
$\tau \colon A^M \to A^M$.
It turns out that $\CA(M;A)$ is a monoid for the composition of maps. More precisely, we have the following result.\begin{proposition}
Let $M$ be a monoid and $A$ a set.
Then the set $\CA(M;A)$ is a submonoid of the symmetric monoid $\Map(A^M)$. 
 \end{proposition}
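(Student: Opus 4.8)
The plan is to verify the two defining properties of a submonoid of $\Map(A^M)$ directly: namely, that $\CA(M;A)$ contains the identity element of $\Map(A^M)$ and that it is closed under composition. The identity element of $\Map(A^M)$ is the identity map $\Id_{A^M}$, which is a cellular automaton by Examples~\ref{ex}(a). Thus the only real content is to show that if $\tau_1, \tau_2 \colon A^M \to A^M$ are cellular automata, then so is their composite $\tau_2 \circ \tau_1$.

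First I would use the characterization of cellular automata as equivariant and uniformly continuous maps (Definition~\ref{def:ca}). Both properties are preserved under composition by completely general reasoning. For equivariance, if $\tau_1(mx) = m\tau_1(x)$ and $\tau_2(my) = m\tau_2(y)$ for all $m \in M$ and all configurations, then
\[
(\tau_2 \circ \tau_1)(mx) = \tau_2(\tau_1(mx)) = \tau_2(m\tau_1(x)) = m\tau_2(\tau_1(x)) = m(\tau_2 \circ \tau_1)(x),
\]
so the composite is equivariant. For uniform continuity, the composite of two uniformly continuous maps is uniformly continuous, which is immediate from the definition (given an entourage $W$ of the target, pull it back through $\tau_2$ to an entourage, then through $\tau_1$). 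Since $\tau_2 \circ \tau_1$ is both equivariant and uniformly continuous, it is a cellular automaton.

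Alternatively, and perhaps more in the spirit of making the memory-set structure explicit, I could argue via Theorem~\ref{t:ca-iff-equiv-unif-cont}: if $S_1$ is a memory set for $\tau_1$ with local defining map $\mu_1 \colon A^{S_1} \to A$, and $S_2$ is a memory set for $\tau_2$ with local map $\mu_2 \colon A^{S_2} \to A$, then one expects $S := S_1 S_2 = \{s_1 s_2 : s_1 \in S_1,\ s_2 \in S_2\}$ to serve as a memory set for $\tau_2 \circ \tau_1$, with a suitable composite local defining map. This set $S$ is finite, being the image of the finite set $S_1 \times S_2$ under multiplication.

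I expect the main obstacle to be purely bookkeeping rather than conceptual: the first approach, appealing directly to preservation of equivariance and uniform continuity under composition, avoids any delicate computation and is the cleanest route. The second approach requires carefully tracking how the value $(\tau_2 \circ \tau_1)(x)(m)$ depends on $x$ through the product set $S_1 S_2$, and verifying that the composite local map is well defined on $A^{S}$; this is where one must be attentive, since in the monoid setting (unlike the group case) there are no inverses to manipulate, but the computation still goes through because the shift action satisfies $m_1(m_2 x) = (m_1 m_2)x$. Since nothing beyond this associativity is needed, I would present the first, topological-algebraic argument as the primary proof for brevity and robustness.
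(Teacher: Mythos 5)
Your proof is correct and takes essentially the same route as the paper: the identity map is a cellular automaton by Example~\ref{ex}.(a), and composition preserves both equivariance and uniform continuity, which is exactly the paper's argument. Your alternative memory-set sketch (with $S_1S_2$ as memory set for $\tau_2\circ\tau_1$) is also sound and corresponds to the paper's Remark~\ref{r:composition}.
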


\begin{proof}
In Example~\ref{ex}.(a), we have observed that the
identity map $\Id_{A^M} \colon A^M \to A^M$ is a cellular automaton. 
On the other hand,
if $\tau_1, \tau_2 \colon A^M \to A^M$ are two cellular automata
then the composition map $\tau_1 \circ \tau_2 \colon A^M \to A^M$ is also a cellular automaton,
since the composition of two equivariant (resp.~uniformly continuous) maps is clearly equivariant (resp. uniformly continuous).
\end{proof}

\begin{remark}
\label{r:composition}
Given two cellular automata $\tau_1, \tau_2 \colon A^M \to A^M$,
we can express a memory set and the associated local defining map for
$\tau_1 \circ \tau_2$ in terms of memory sets and their associated local defining maps for
$\tau_1$ and $\tau_2$ as follows.

Let $S_i \subset M$ and $\mu_i \colon A^{S_i} \to A$ be a memory set and associated local
defining map for $\tau_i$, $i=1,2$.

For $y \in A^{S_2S_1}$ and $s_1 \in  S_1$, define $y_{s_1} \in A^{S_2}$ by setting $y_{s_1}(s_2) = y(s_2s_1)$ for all $s_2 \in S_2$. Also, denote by $\overline{y} \in A^{S_1}$ the map defined by $\overline{y}(s_1) = \mu_2(y_{s_1})$ for all $s_1 \in S_1$. We finally define the map $\mu \colon A^{S_2S_1} \to A$ by setting
\begin{equation}
\label{e;mu-phi-mu'}
\mu(y) = \mu_1(\overline{y})
\end{equation}
for all $y \in A^{S_2S_1}$.
Let $x \in A^M$, $m \in M$, $s_1 \in S_1$, and $s_2 \in S_2$. We then have
\[
\begin{split}
(s_1mx)\vert_{S_2}(s_2) & = s_1mx(s_2)\\
& = mx(s_2s_1) \\
& = (mx)\vert_{S_2S_1}(s_2s_1)\\
&  = \left((mx)\vert_{S_2S_1}\right)_{s_1}(s_2).
\end{split}
\]
It follows that
\begin{equation*}
(s_1mx)\vert_{S_2} = \left((mx)\vert_{S_2S_1}\right)_{s_1}
\end{equation*}
and therefore
\begin{equation*}
\tau_2(mx)(s_1) = \mu_2\left((s_1mx)\vert_{S_2}\right) = \mu_2\left(\left((mx)\vert_{S_2S_1}\right)_{s_1}\right) = \overline{(mx)\vert_{S_2S_1}}(s_1).
\end{equation*}
As a consequence,
\begin{equation}
\label{e:mu-mu1-mu2}
\tau_2(mx)\vert_{S_1} = \overline{(mx)\vert_{S_2S_1}}.
\end{equation}
Finally, one has
\begin{equation}\label{e;mu-tau-circ-tau}
\begin{split}
(\tau_1 \circ \tau_2)(x)(m) & = \tau_1(\tau_2(x))(m) \\
& = \mu_1\left((m\tau_2(x))\vert_{S_1}\right)\\
\mbox{(by $M$-equivariance of $\tau_2$)} \ \ & = \mu_1(\tau_2(mx) \vert_{S_1})\\
\mbox{(by \eqref{e:mu-mu1-mu2})} \ \ & = \mu_1(\overline{(mx)\vert_{S_2S_1}})\\
\mbox{(by \eqref{e;mu-phi-mu'})} \ \ & = \mu\left((mx)\vert_{S_2S_1}\right).
\end{split}
\end{equation}
This shows that the cellular automaton $\tau_1 \circ \tau_2$ admits $S:= S_2S_1 \subset M$ as memory set and $\mu \colon A^S \to A$ as associated local defining map.
\end{remark}

\subsection{Reversible cellular automata}
Let $M$ be a monoid and $A$ a set.
A cellular automaton $\tau \colon A^M \to A^M$ is called \emph{reversible} if $\tau$ is an invertible element of the monoid $\CA(M;A)$.
Thus, a   cellular automaton $\tau \colon A^M \to A^M$ is  reversible
if and only if $\tau$ is bijective and the inverse map $\tau^{-1} \colon A^M \to A^M$ is also a cellular automaton.

 \begin{proposition}
\label{p:reversible}
Let $M$ be a monoid and $A$ a finite set. 
Then every bijective cellular automaton
$\tau \colon A^M \to A^M$ is reversible.
\end{proposition}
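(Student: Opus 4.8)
The plan is to show that the inverse map $\tau^{-1} \colon A^M \to A^M$ is again a cellular automaton, which is precisely what reversibility requires. Since $A$ is finite, I can invoke Proposition~\ref{p:carc-ca-finite-alphabet}, so it suffices to check that $\tau^{-1}$ is equivariant and continuous.

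First I would establish equivariance of $\tau^{-1}$. This is a purely formal consequence of the bijectivity and equivariance of $\tau$. Given $y \in A^M$ and $m \in M$, write $y = \tau(x)$ for the unique $x = \tau^{-1}(y)$. Then equivariance of $\tau$ gives $m y = m \tau(x) = \tau(m x)$, whence $\tau^{-1}(m y) = m x = m \tau^{-1}(y)$. Thus $\tau^{-1}$ is equivariant.

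The continuity of $\tau^{-1}$ is where finiteness of $A$ enters essentially. Since $A$ is finite, $A^M$ is compact for the prodiscrete topology by Tychonoff's theorem, and this topology is Hausdorff (see Example~\ref{ex:prodiscrete-top}). A continuous bijection from a compact space to a Hausdorff space is a homeomorphism, so $\tau^{-1}$ is automatically continuous; alternatively, one can apply Proposition~\ref{p:inj-hom-unif-emb} to the continuous injection $\tau \colon A^M \to A^M$ (with $A^M$ compact and Hausdorff) to conclude that $\tau$ is a uniform embedding, and since $\tau$ is onto this makes $\tau$ a uniform isomorphism with $\tau^{-1}$ uniformly continuous.

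Combining the two points, $\tau^{-1}$ is equivariant and continuous, hence a cellular automaton by Proposition~\ref{p:carc-ca-finite-alphabet}; therefore $\tau$ is an invertible element of $\CA(M;A)$, i.e.\ reversible. I do not expect a serious obstacle here: the argument is routine once the compactness of $A^M$ is exploited. The only point worth flagging is that this compactness (equivalently, the finiteness of $A$) is indispensable — for infinite alphabets a bijective cellular automaton need not have continuous inverse, so the statement genuinely relies on the finite-alphabet hypothesis.
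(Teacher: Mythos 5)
Your proposal is correct and follows essentially the same route as the paper's own proof: equivariance of $\tau^{-1}$ is formal, continuity of $\tau^{-1}$ follows because $\tau$ is a continuous bijection of the compact Hausdorff space $A^M$, and then Proposition~\ref{p:carc-ca-finite-alphabet} upgrades $\tau^{-1}$ to a cellular automaton. Your closing remark on the indispensability of the finite-alphabet hypothesis is also borne out by the paper's Example~\ref{e:not-invert}.
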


\begin{proof}
Let $\tau \colon A^M \to A^M$ be a bijective cellular automaton.
Since $\tau$ is equivariant,
its inverse map $\tau^{-1}$   is also equivariant.
On the other hand, as $\tau$ is continuous  and $A^M$ is a compact Hausdorff space,
 $\tau^{-1}$ is continuous.  
 Therefore $\tau^{-1}$ is a cellular automaton by Proposition~\ref{p:carc-ca-finite-alphabet}.
\end{proof}

The following example (cf. \cite[Lemma~5.1]{reversible} 
and \cite[Example~1.1.3]{livre}) shows that Proposition~\ref{p:reversible} becomes false if we omit the finiteness hypothesis on the alphabet $A$.

\begin{example}
\label{e:not-invert}
Let $M$ be a monoid containing an element $m_0$ of infinite order (i.e., such that the set of powers of $m_0$ is infinite). 
Let $K$ be a field and
take $A: = K[[t]]$,  the ring of all formal power series  with coefficients in $K$.
 A configuration $x \in A^M$ is a map $x \colon M \to K[[t]]$ such that
\begin{equation}
\label{e:expression-de-x-de-n}
x(m) = \sum_{i \in \N} x_{m,i}t^i,
\end{equation}
where $x_{m,i} \in K$ for all $m \in M$ and $i \in \N$.
Consider the map $\tau \colon A^M \to A^M$ defined by
$$
\tau(x)(m) := x(m) - tx(m_0 m)
$$
for all $x \in A^M$ and $m \in M$.
Observe that $\tau$ is the cellular automaton admitting $S = \{1_M,m_0\}$ as a memory set
and the map $\mu \colon A^S \to A$, defined by $\mu(y) = y(1_M) - ty(m_0)$ for all $y \in A^S$, as the associated local defining map.
\par
Let us show that $\tau$ is bijective.
Consider the map $\sigma \colon A^M \to A^M$ given by
$$
\sigma(x)(m) := x(m) + tx(m_0 m) + t^2x(m_0^2 m) + t^3x(m_0^3 m) + \cdots
$$
for all $x \in A^M$ and $m \in M$. 
 It is straightforward to check that $\sigma \circ \tau = \tau \circ \sigma = \Id_{A^M}$.
Therefore, $\tau$ is bijective with inverse map $\tau^{-1} = \sigma$.
\par
Let us show that the map $\sigma \colon A^M \to A^M$ is not a cellular automaton.
Let $x \in A^M$ be the identically-zero configuration, i.e., the configuration  defined by $x(m) = 0$ for all $m \in M$.
Let $F$ be a finite subset of $M$. 
Choose $k \in \N$ 
such that $m_0^k \notin F$ (this is possible because $m_0$ has infinite order). 
 Consider the configuration $y \in A^M$ defined by $y(m) = 0$ if
 $m \in M \setminus \{m_0^k\}$ and $y(m_0^k) = 1$.   
 Then $y$ and $x$ coincide on $F$.
However, the value at $1_M$ of $\sigma(y)$ is $t^k$
 while the value of $\sigma(x)$ at $1_M$ is $0$. 
As $t^k \not= 0$, we conclude that $\sigma$ is not continuous at $x$ (for the prodiscrete topology).
Since every cellular automaton must be uniformly continuous and hence continuous,
this shows that $\sigma$ is not a cellular automaton.
Thus, $\tau$ is a bijective cellular automaton that is not reversible.
\end{example}

\subsection{Minimal memory}
\label{sec:minimal-memory}

Let $M$ be a monoid and let $A$ be a set.
Let $\tau \colon A^M \to A^M$ be a cellular automaton. Let $S$ be a memory set for $\tau$ and let $\mu \colon A^S \to A$ be the associated defining map. If $S'$ is a finite subset of $M$ such that $S \subset S'$, then $S'$ is also a memory
set for $\tau$ and the local defining map associated with $S'$ is the map
$\mu' \colon A^{S'} \to A$ given by $\mu' = \mu \circ p$, where $p \colon A^{S'} \to A^S$ is the  canonical projection (restriction map). This shows that the memory set of a cellular automaton is not unique in general. However, we shall see that every cellular automaton admits a unique memory set of minimal cardinality. Let us first establish the following auxiliary result.

\begin{lemma}
\label{l:inter-memoire}
Let $M$ be a monoid and $A$ a set.
Let $\tau \colon A^M \to A^M$ be a cellular automaton.
Suppose that  $S_1$ and $S_2$ are memory sets for $\tau$.
Then $S_1 \cap S_2$ is also a memory set for $\tau$.
\end{lemma}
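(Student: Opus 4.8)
The plan is to show that if two configurations $x, y \in A^M$ agree on $S_1 \cap S_2$, then $\tau(x)$ and $\tau(y)$ take the same value at $1_M$; this will exhibit $S_1 \cap S_2$ as a set on which the value of $\tau(\cdot)(1_M)$ depends, which by the uniform-continuity characterization (Theorem~\ref{t:ca-iff-equiv-unif-cont}) is exactly what it means for $S_1 \cap S_2$ to be a memory set. The subtlety is that $x$ and $y$ may differ outside $S_1 \cap S_2$, so I cannot directly invoke that $S_1$ alone (or $S_2$ alone) forces equality of the output at $1_M$. The standard trick for this kind of statement is to interpolate through an auxiliary configuration.

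First I would let $\mu_i \colon A^{S_i} \to A$ denote the local defining map associated with $S_i$, so that $\tau(z)(1_M) = \mu_i(z\vert_{S_i})$ for $i = 1, 2$ and all $z \in A^M$. Suppose $x, y \in A^M$ coincide on $S_1 \cap S_2$. The key step is to build a bridging configuration $w \in A^M$ that agrees with $x$ on $S_1$ and with $y$ on $S_2$. This is possible precisely because $x$ and $y$ already agree on the overlap $S_1 \cap S_2$: define $w$ to equal $x$ on $S_1$, to equal $y$ on $S_2 \setminus S_1$, and arbitrarily elsewhere. On $S_1 \cap S_2$ the prescriptions ``equal $x$'' and ``equal $y$'' coincide by hypothesis, so $w$ is well defined, and by construction $w\vert_{S_1} = x\vert_{S_1}$ and $w\vert_{S_2} = y\vert_{S_2}$.

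With $w$ in hand, I would chain the two local rules:
\[
\tau(x)(1_M) = \mu_1(x\vert_{S_1}) = \mu_1(w\vert_{S_1}) = \tau(w)(1_M) = \mu_2(w\vert_{S_2}) = \mu_2(y\vert_{S_2}) = \tau(y)(1_M).
\]
Thus whenever $x$ and $y$ agree on $S_1 \cap S_2$, their images agree at $1_M$. By equivariance this propagates to every $m \in M$: applying the equality to the shifted configurations $mx$ and $my$ (which agree on $S_1 \cap S_2$ whenever $x$ and $y$ agree on $(S_1 \cap S_2)m$) gives $\tau(x)(m) = \tau(y)(m)$, and indeed one recovers a local defining map $\mu \colon A^{S_1 \cap S_2} \to A$ with $\tau(x)(m) = \mu\bigl((mx)\vert_{S_1 \cap S_2}\bigr)$. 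By Theorem~\ref{t:ca-iff-equiv-unif-cont} this shows $S_1 \cap S_2$ is a memory set for $\tau$.

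The main obstacle is conceptual rather than computational: recognizing that the hypothesis ``$x$ and $y$ agree on $S_1 \cap S_2$'' is exactly the compatibility condition needed to glue $x\vert_{S_1}$ and $y\vert_{S_2}$ into a single configuration $w$. Once the interpolation configuration is produced, the rest is a short substitution using the two local maps, so the real content is in the gluing step.
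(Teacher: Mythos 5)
Your proof is correct and follows essentially the same route as the paper: the paper also glues an interpolating configuration $z$ (taken to equal $x$ on $S_1$ and $y$ on $M \setminus S_1$, a particular instance of your $w$), chains $\tau(x)(1_M) = \tau(z)(1_M) = \tau(y)(1_M)$ using the two memory sets, and then invokes equivariance as in Theorem~\ref{t:ca-iff-equiv-unif-cont} to conclude. No gaps to report.
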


\begin{proof}
Let $x \in A^M$.
We claim that $\tau(x)(1_M)$ depends only on the restriction of $x$ to $S_1 \cap S_2$.
To see this, consider an element $y \in A^M$ such that $x\vert_{S_1 \cap S_2} = y\vert_{S_1 \cap S_2}$. Let us choose an element $z \in A^M$ such that $z\vert_{S_1} = x\vert_{S_1}$
and $z\vert_{S_2} = y\vert_{S_2}$ (we may take for instance the configuration $z \in A^M$ which coincides with $x$ on $S_1$ and with $y$ on $M \setminus S_1$). We have
$\tau(x)(1_M) = \tau(z)(1_M)$ since $x$ and $z$ coincide on
$S_1$, which is a memory set for $\tau$. On the other hand, we have $\tau(y)(1_M) = \tau(z)(1_M)$ since $y$ and $z$ coincide on $S_2$, which is also a memory set for $\tau$. It follows that $\tau(x)(1_M) = \tau(y)(1_M)$. This proves our claim.
\par
Thus there exists a map
$\mu \colon A^{S_1 \cap S_2} \to A$ such that $\tau(x)(1_M) = \mu(x\vert_{S_1 \cap S_2})$ for all 
$x \in A^M$. Since $\tau$ is $M$-equivariant, arguing as in the proof of Theorem \ref{t:ca-iff-equiv-unif-cont}, we deduce that $S_1 \cap S_2$ is a memory set for $\tau$. 
\end{proof}

\begin{proposition}
Let $\tau \colon A^M \to A^M$ be a cellular automaton.
Then there exists a unique memory set $S_0 \subset M$ for $\tau$ of minimal cardinality.
Moreover, if $S$ is a finite subset of $M$, then $S$ is a memory set for $\tau$ if and only
if $S_0 \subset S$.
\end{proposition}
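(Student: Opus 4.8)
The plan is to produce the minimal memory set directly and to derive both its uniqueness and the characterization from Lemma~\ref{l:inter-memoire}. By Theorem~\ref{t:ca-iff-equiv-unif-cont}, the cellular automaton $\tau$ admits at least one memory set, so the set of cardinalities of memory sets for $\tau$ is a nonempty subset of $\N$ and hence, by well-ordering, has a least element $n$. First I would simply let $S_0$ be any memory set with $|S_0| = n$; such an $S_0$ exists by definition of $n$, and it is a memory set of minimal cardinality by construction. What then remains is to show that $S_0$ is the \emph{only} memory set of minimal cardinality and that the memory sets are exactly the finite supersets of $S_0$.

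For uniqueness I would argue as follows. Suppose $S_0$ and $S_0'$ are both memory sets with $|S_0| = |S_0'| = n$. By Lemma~\ref{l:inter-memoire} the intersection $S_0 \cap S_0'$ is again a memory set, so minimality of $n$ gives $|S_0 \cap S_0'| \geq n$; on the other hand $S_0 \cap S_0' \subseteq S_0$ yields $|S_0 \cap S_0'| \leq |S_0| = n$. Hence $|S_0 \cap S_0'| = |S_0|$, and since these are finite sets with $S_0 \cap S_0' \subseteq S_0$, the cardinality equality upgrades to the set equality $S_0 \cap S_0' = S_0$, i.e. $S_0 \subseteq S_0'$. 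By symmetry $S_0' \subseteq S_0$, so $S_0 = S_0'$.

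For the characterization, let $S$ be a finite subset of $M$. If $S_0 \subseteq S$, then $S$ is a memory set by the observation recorded at the start of this subsection, namely that every finite superset of a memory set is again a memory set (with local defining map obtained by precomposing with the restriction $A^{S} \to A^{S_0}$). Conversely, if $S$ is a memory set, then $S \cap S_0$ is a memory set by Lemma~\ref{l:inter-memoire}, so $|S \cap S_0| \geq n$ by minimality; combined with $S \cap S_0 \subseteq S_0$ and $|S_0| = n$ this again forces $S \cap S_0 = S_0$, that is $S_0 \subseteq S$.

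Given Lemma~\ref{l:inter-memoire}, the whole argument is essentially routine, and I do not expect a genuine obstacle. The one place that calls for a little care is the uniqueness step (and its mirror image in the converse direction of the characterization), where the intersection lemma must be combined with the finiteness of memory sets to pass from the cardinality equality $|S_0 \cap S_0'| = |S_0|$ to the inclusion $S_0 \subseteq S_0'$; this is exactly the point where one uses that a finite subset of $S_0$ having the same cardinality as $S_0$ must equal $S_0$.
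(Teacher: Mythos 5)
Your proposal is correct and takes essentially the same route as the paper: both rest on Lemma~\ref{l:inter-memoire} together with the minimality of $|S_0|$ to force $S \cap S_0 = S_0$ for any memory set $S$, and on the observation that finite supersets of a memory set are memory sets for the converse direction. The only cosmetic difference is that you prove uniqueness as a separate preliminary step, whereas the paper deduces it at the end as an immediate consequence of the inclusion $S_0 \subset S$ holding for every memory set $S$.
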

\begin{proof}
Let $S_0$ be a memory set for $\tau$ of minimal cardinality.
As we have seen at the beginning of this section,
every finite subset of $M$ containing $S_0$ is also a memory set for $\tau$.
Conversely, let $S$ be a memory set for $\tau$.
As $S \cap S_0$ is a memory set for $\tau$ by Lemma \ref{l:inter-memoire}, we have 
$\vert S \cap S_0 \vert \geq \vert S_0 \vert$, by minimality of $|S_0|$. 
This implies $S \cap S_0 = S_0$, that is,
$S_0 \subset S$. In particular, $S_0$ is the unique memory set of minimal cardinality.
\qed\end{proof}

The memory set of minimal cardinality of a cellular automaton is called its \emph{minimal}
memory set.

\begin{remark}
A map $F \colon A^M \to A^M$ is constant if there exists a configuration $x_0 \in A^G$ such that
$F(x) = x_0$ for all $x \in A^M$.
By $M$-equivariance, a cellular automaton $\tau \colon A^M \to A^M$ is constant if and only if there exists $a \in A$ such that $\tau(x)(m) = a$ for all $x \in A^M$ and $m \in M$.
Observe that a cellular automaton $\tau \colon A^M \to A^M$ is constant if and only if its minimal memory set is the empty set.
\end{remark}

\subsection{Induction and restriction of cellular automata}
Let $M$ be a monoid and let $A$ be a set. Let also $N$ be a submonoid of $M$. 
\par
We denote by $\CA(M,N;A)$ the set consisting of all cellular automata $\tau \colon A^M \to A^M$
admitting a memory set $S$ such that $S \subset N$.
Note that a cellular automaton $\tau \in \CA(M;A)$ is in $\CA(M,N;a)$ if and only if the minimal memory set of $\tau$ is contained in $N$. 
It follows from Remark~\ref{r:composition}
that if $\tau_1, \tau_2 \colon A^M \to A^M$ are two cellular automata admitting memory sets
$S_1, S_2$ contained in $N$ then their composition $\tau_1 \circ \tau_2$ admits the set
$S:= S_2S_1 \subset N$ as a memory set. Moreover, the identity cellular automaton $\Id_{A^M}$
admits $S:= \{1_M\} \subset N$ as a memory set. As a consequence, $\CA(M,N;A)$ is a submonoid of $\CA(M;A)$. 
\par
Let $\tau \in \CA(M,N;A)$. Let $S$ be a memory set for $\tau$ such that $S \subset N$ and denote by
$\mu \colon A^S \to A$ the associated local defining map. 
Then, by virtue of Theorem \ref{t:ca-iff-equiv-unif-cont}, the map $\tau_N \colon A^N \to A^N$
defined by
\[
\tau_N(x)(n) = \mu((nx)\vert_S)
\]
for all $x \in A^N$ and $n \in N$ is a cellular automaton over the submonoid $N$ with memory set $S$
and local defining map $\mu$. Observe that if $\widetilde{x} \in A^M$ is such that $\widetilde{x}\vert_N = x$, then
\begin{equation}
\label{e:tilde-restriction}
\tau_N(x)(n) = \tau(\widetilde{x})(n)
\end{equation}
for all $n \in N$. This shows, in particular, that $\tau_N$ does not depend on the choice of
the memory set $S \subset N$. We call $\tau_N \in \CA(N;A)$ the \emph{restriction} of the cellular automaton $\tau \in \CA(M,N;A)$ to $N$.

Conversely, let $\sigma \colon A^N \to A^N$ be a cellular automaton over $N$ with memory set $S$
and local defining map $\mu \colon A^S \to A$. Then the map $\sigma^M \colon A^M \to A^M$  
defined by
\[
\sigma^M(\widetilde{x})(m) := \mu((m\widetilde{x})\vert_S)
\]
for all $\widetilde{x} \in A^M$ and $m \in M$ is a cellular automaton over $M$ with memory set $S$
and local defining map $\mu$. If $S_0$ is the minimal memory set of $\sigma$ and $\mu_0 \colon A^{S_0} \to A$ is the associated local defining map, then $\mu = \mu_0 \circ \pi$, where
$\pi \colon A^S \to A^{S_0}$ is the restriction map, and we have
\[
\sigma^M(\widetilde{x})(m) = \mu((m\widetilde{x})\vert_S) = \mu_0 \circ \pi((m\widetilde{x})\vert_S) = \mu_0((m\widetilde{x})\vert_{S_0})
\]
for all $\widetilde{x} \in A^M$ and $m \in M$. This shows, in particular, that $\sigma^M$ does not depend on the choice of the memory set $S \subset N$. 
Note that if $\widetilde{x} \in A^M$ is such that $\widetilde{x}\vert_N = x$, then we have
the analogue of \eqref{e:tilde-restriction}, namely
\begin{equation}
\label{e:tilde-induction}
\sigma^M(\widetilde{x})(n) = \sigma(x)(n)
\end{equation}
for all $n \in N$.

We call $\sigma^M \in \CA(M,N;A)$ the cellular automaton \emph{induced} by $\sigma \in \CA(N;A)$.
\par
The proof of the following result immediately follows from the above definitions.

\begin{proposition}
\label{p:ind-rest-inv}
The map $\tau \mapsto \tau_N$ is a monoid isomorphism from $\CA(M,N;A)$ onto
$\CA(N;A)$ whose inverse is the map $\sigma \mapsto \sigma^M$.
\qed
\end{proposition}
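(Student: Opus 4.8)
The plan is to exploit the fact, established through Theorem~\ref{t:ca-iff-equiv-unif-cont}, that a cellular automaton is completely determined by any one of its memory sets together with the associated local defining map: by formula~\eqref{e:cell-aut}, two cellular automata sharing a common memory set $S$ and the same local defining map $\mu \colon A^S \to A$ must coincide. Both constructions $\tau \mapsto \tau_N$ and $\sigma \mapsto \sigma^M$ were arranged precisely so as to preserve a chosen memory set $S \subset N$ and its local defining map $\mu$, so the entire argument reduces to bookkeeping on pairs $(S,\mu)$.

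First I would check that the two maps are mutually inverse. Starting from $\tau \in \CA(M,N;A)$, choose a memory set $S \subset N$ with associated local defining map $\mu$. By construction $\tau_N \in \CA(N;A)$ has memory set $S$ and local defining map $\mu$, and then $(\tau_N)^M \in \CA(M,N;A)$ again has memory set $S$ and local defining map $\mu$. Since $\tau$ and $(\tau_N)^M$ share a memory set and the associated local defining map, they are equal, so $(\tau_N)^M = \tau$. The symmetric computation, beginning from $\sigma \in \CA(N;A)$, gives $(\sigma^M)_N = \sigma$. Hence $\tau \mapsto \tau_N$ is a bijection whose inverse is $\sigma \mapsto \sigma^M$.

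Next I would verify that $\tau \mapsto \tau_N$ is a monoid morphism. For the identity, the cellular automaton $\Id_{A^M}$ admits $\{1_M\} \subset N$ as a memory set with local defining map $\Id_A$ (Example~\ref{ex}.(a)); restricting yields the cellular automaton over $N$ with memory set $\{1_M\}$ and local defining map $\Id_A$, which is exactly $\Id_{A^N}$. For composition, take $\tau_1,\tau_2 \in \CA(M,N;A)$ with memory sets $S_1,S_2 \subset N$ and local defining maps $\mu_1,\mu_2$. Remark~\ref{r:composition} shows that $\tau_1 \circ \tau_2$ has memory set $S_2 S_1$ and local defining map $\mu$ given by~\eqref{e;mu-phi-mu'}. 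Applying the same remark inside $\CA(N;A)$ to $(\tau_1)_N$ and $(\tau_2)_N$ yields that $(\tau_1)_N \circ (\tau_2)_N$ has the very same memory set $S_2 S_1 \subset N$ and the very same local defining map $\mu$, since the formulas defining $y_{s_1}$, $\overline{y}$, and $\mu$ depend only on $S_1,S_2,\mu_1,\mu_2$ and not on the ambient monoid. Thus $(\tau_1 \circ \tau_2)_N = (\tau_1)_N \circ (\tau_2)_N$, and the restriction map respects composition.

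The only point requiring a little care --- the step I expect to be the main, if very mild, obstacle --- is checking that the product set $S_2 S_1$ and the auxiliary construction of $\mu$ in Remark~\ref{r:composition} genuinely coincide whether computed over $M$ or over $N$. This is immediate once one observes that $N$ is a submonoid of $M$, so products of elements of $N$ agree in $N$ and in $M$, and the block decompositions $y_{s_1} \in A^{S_2}$ and $\overline{y} \in A^{S_1}$ only ever involve elements of $S_1,S_2 \subset N$. With this in hand, $\tau \mapsto \tau_N$ is a bijective monoid morphism, hence a monoid isomorphism, which completes the proof.
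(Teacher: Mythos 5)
Your proof is correct and is essentially the paper's argument: the paper leaves this proposition as an immediate consequence of the definitions (its proof is just \qed), and what it has in mind is exactly your observation that both constructions preserve a chosen pair $(S,\mu)$ with $S \subset N$, that a cellular automaton is uniquely determined by such a pair via \eqref{e:cell-aut}, and that the composition formula of Remark~\ref{r:composition} is computed identically in $N$ and in $M$. Your write-up simply makes this bookkeeping explicit, including the mild but genuine point that $S_2S_1$ and the composite local defining map do not depend on the ambient monoid.
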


\begin{proposition}
\label{p:ind-res}
Let $M$ be a monoid and let $A$ be a set. Let also $N$ be a submonoid of $M$.
Then the following implications hold:
\begin{enumerate}[{\rm (i)}]
\item 
if $\tau \in \CA(M,N;A)$ is surjective then $\tau_N \in \CA(N;A)$ is surjective; 
\item 
if $\sigma \in \CA(N;A)$ is injective then $\sigma^M \in \CA(M,N;A)$ is injective.
\end{enumerate}
\end{proposition}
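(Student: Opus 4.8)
The plan is to exploit the relation between configurations over $M$ and over $N$ encoded in the formulas \eqref{e:tilde-restriction} and \eqref{e:tilde-induction}, together with the explicit description of how the restriction map $A^M \to A^N$ interacts with the induced and restricted cellular automata. For part (i), suppose $\tau \in \CA(M,N;A)$ is surjective with memory set $S \subset N$ and local defining map $\mu$. Given an arbitrary $y \in A^N$, I would first extend it to some $\widetilde{y} \in A^M$ with $\widetilde{y}\vert_N = y$; by surjectivity of $\tau$ there is $\widetilde{x} \in A^M$ with $\tau(\widetilde{x}) = \widetilde{y}$. Setting $x := \widetilde{x}\vert_N \in A^N$, I would then check using \eqref{e:tilde-restriction} that $\tau_N(x)(n) = \tau(\widetilde{x})(n) = \widetilde{y}(n) = y(n)$ for all $n \in N$, so that $\tau_N(x) = y$. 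This shows $\tau_N$ is surjective.

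For part (ii), suppose $\sigma \in \CA(N;A)$ is injective and let $\widetilde{x}_1, \widetilde{x}_2 \in A^M$ satisfy $\sigma^M(\widetilde{x}_1) = \sigma^M(\widetilde{x}_2)$. The key observation is that, because the memory set $S$ of $\sigma$ is contained in $N$, the value $\sigma^M(\widetilde{x})(m) = \mu((m\widetilde{x})\vert_S)$ depends on $\widetilde{x}$ only through the values $\widetilde{x}(sm)$ with $s \in S \subset N$. The plan is to reduce injectivity over $M$ to injectivity over $N$ by decomposing $M$ into right cosets of $N$, or more directly by translating. Fix $m \in M$; I would consider the configurations $x_1^{(m)}, x_2^{(m)} \in A^N$ defined by $x_i^{(m)}(n) := \widetilde{x}_i(nm)$ for $n \in N$, which are precisely $(m\widetilde{x}_i)\vert_N$. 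Using \eqref{e:tilde-induction} applied to the shifted configurations, one gets $\sigma(x_i^{(m)})(n) = \sigma^M(m\widetilde{x}_i)(n) = (m\,\sigma^M(\widetilde{x}_i))(n) = \sigma^M(\widetilde{x}_i)(nm)$ for all $n \in N$, where the middle equality is $M$-equivariance of $\sigma^M$. From $\sigma^M(\widetilde{x}_1) = \sigma^M(\widetilde{x}_2)$ it then follows that $\sigma(x_1^{(m)}) = \sigma(x_2^{(m)})$, whence $x_1^{(m)} = x_2^{(m)}$ by injectivity of $\sigma$. In particular, evaluating at $n = 1_M$ gives $\widetilde{x}_1(m) = x_1^{(m)}(1_M) = x_2^{(m)}(1_M) = \widetilde{x}_2(m)$. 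Since $m \in M$ was arbitrary, $\widetilde{x}_1 = \widetilde{x}_2$, proving $\sigma^M$ injective.

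The main obstacle I anticipate is organizing the cosets and the equivariance bookkeeping cleanly: one must be careful that $nm$ with $n \in N$ and $m \in M$ need not lie in $N$, so it is the shifted configuration $m\widetilde{x}_i$ whose restriction to $N$ carries the relevant data, and the relation \eqref{e:tilde-induction} must be invoked for $m\widetilde{x}_i$ rather than for $\widetilde{x}_i$ directly. The interplay between the $M$-shift and the $N$-shift is what makes the argument work, and verifying that $(m\widetilde{x}_i)\vert_N$ is the correct object to feed into $\sigma$ is the crux. I would make sure to state explicitly that $\sigma^M$ is $M$-equivariant (which is part of its being a cellular automaton) so that the step $\sigma^M(m\widetilde{x}_i) = m\,\sigma^M(\widetilde{x}_i)$ is justified, and that $\sigma$ applied to $(m\widetilde{x}_i)\vert_N$ agrees with the restriction of $\sigma^M(m\widetilde{x}_i)$ to $N$ via \eqref{e:tilde-induction}.
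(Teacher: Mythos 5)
Your proposal is correct and follows essentially the same route as the paper's proof: part (i) is verbatim the paper's argument (extend $y$, lift by surjectivity, restrict back via \eqref{e:tilde-restriction}), and part (ii) is the paper's argument as well, combining $M$-equivariance of $\sigma^M$ with \eqref{e:tilde-induction} applied to the shifted configurations $m\widetilde{x}_i$, then invoking injectivity of $\sigma$ and evaluating at $1_M$ to recover $\widetilde{x}_1(m) = \widetilde{x}_2(m)$. The only cosmetic difference is that you apply \eqref{e:tilde-induction} before equivariance while the paper does the reverse, which changes nothing.
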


\begin{proof}
Suppose first that $\tau \in \CA(M,N;A)$ is surjective and denote by $S \subset N$ and $\mu \colon A^S \to A$ a memory set and associated local defining map. 
Let $y \in A^N$ and let $\widetilde{y} \in A^M$ be a configuration extending $y$. Since $\tau$ is surjective, we can find $\widetilde{x} \in A^M$ such that $\tau(\widetilde{x}) = \widetilde{y}$. Let $x:=\widetilde{x}\vert_N \in A^N$ denote
the restriction of $\widetilde{x}$ to $N$. Then we have
\[
\begin{split}
\tau_N(x)(n) & = \mu((nx)\vert_S)\\
(\mbox{since } S \subset N) \  \ & =  \mu((n\widetilde{x})\vert_S)\\
& = \tau(\widetilde{x})(n)\\
& = \widetilde{y}(n)\\
& = y(n)
\end{split}
\]
for all $n \in N$, that is, $\tau_N(x) = y$. This shows that $\tau_N$ is surjective.
\par
Suppose now that $\sigma \in \CA(N;A)$ is injective and denote by $S \subset N$ and $\mu \colon A^S \to A$ a memory set and associated local defining map. 
Let $x_1, x_2 \in A^M$ and suppose that $\sigma^M(x_1) = \sigma^M(x_2)$. Let us show that $x_1 = x_2$.
Let $m \in M$ and observe that, by $M$-equivariance, we have $\sigma^M(mx_1) = m\sigma^M(x_1) = m\sigma^M(x_2)= \sigma^M(mx_2)$. It follows from \eqref{e:tilde-induction} that
$\sigma((mx_1)\vert_N) = \sigma((mx_2)\vert_N)$. Since $\sigma$ is injective, we deduce that
$(mx_1)\vert_N = (mx_2)\vert_N$, in particular $x_1(m) = (mx_1)\vert_N(1_M) = (mx_2)\vert_N(1_M) =
x_2(m)$. Thus $x_1 = x_2$. This shows that $\sigma^M$ is injective.
\end{proof}

In contrast with the case of cellular automata over groups (see~\cite[Proposition~1.7.4]{livre}),  
the implications in Proposition~\ref{p:ind-res} cannot be reversed as shown by the following example. 

 \begin{example}
 \label{ex:contre-ex-ind-res}
Take $M:= B = \langle p,q: pq=1\rangle$, the bicyclic monoid,
and  $N:= \langle p\rangle$, the infinite cyclic submonoid  generated by $p$.
Let $A$ be a set   and consider the cellular automaton $\tau  \in \CA(M,N;A)$ defined by
$\tau(x) := x \circ L_p$ for all $x \in A^M$ (cf.~Example~\ref{ex}.(c)
and the proof of Theorem~\ref{t:contains-bicyclic-is-non-surj} below).
As $x = \tau(x) \circ L_q$ for all $x \in A^M$,
the cellular automaton $\tau$ is injective.
Moreover, its restriction   $\tau_N \colon A^N \to A^N$ is surjective,
since, if we are given $y \in A^N$, then any  $z \in A^N$  such that $z(p^n) = y(p^{n - 1})$ for all $n \geq 1$ satisfies $\tau(z) = y$.
However, as soon as $A$ has more than one element, $\tau_N$ is not injective and $\tau$ is not surjective.
Indeed,   every configuration in the image of $\tau$ must take the same value at $1_M$ and $q p$.
 On the other hand, any two configurations in $A^N$ that coincide outside of  $1_M$ have the same image by $\tau_N$. 
\end{example}

\section{Surjunctive monoids}
\label{sec:surj-monoids}

 \subsection{Definition and first examples}

\begin{definition}
A monoid $M$ is called \emph{surjunctive} if
  every
injective cellular automaton with finite alphabet over $M$ is surjective.
\end{definition}

When $M$ is a group, the above definition is equivalent to the one given by Gottschalk 
\cite{gottschalk} (see also \cite[Chapter 3]{livre}).

\begin{remark}
If a monoid $M$ is surjunctive, then every injective cellular automaton with finite alphabet over $M$ is bijective and hence reversible 
by Proposition~\ref{p:reversible}.
\end{remark}

\begin{proposition}
\label{p:finite-surj}
Every finite monoid is surjunctive.
\end{proposition}

\begin{proof}
Suppose that  $M$ is a finite monoid and $A$ a finite set.
Then $A^M$ is also finite.
As    every injective self-map of a finite set is surjective,
it follows that every injective cellular automaton $\tau \colon A^M \to A^M$ is surjective.
\end{proof}

\begin{corollary}
If $X$ is a finite set then the symmetric monoid $\Map(X)$ is surjunctive.
\qed
\end{corollary}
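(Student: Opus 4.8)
The plan is to reduce this statement immediately to Proposition~\ref{p:finite-surj}. First I would observe that, since $X$ is finite, the underlying set of the symmetric monoid $\Map(X)$ — namely the set of all maps $f \colon X \to X$ — is itself finite: it has exactly $|X|^{|X|}$ elements. Hence $\Map(X)$, equipped with the composition of maps, is a finite monoid.

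Given this, the surjunctivity of $\Map(X)$ follows at once from Proposition~\ref{p:finite-surj}, which asserts that every finite monoid is surjunctive. Thus the corollary is merely a specialization of the preceding proposition to the particular finite monoid $\Map(X)$.

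I do not expect any genuine obstacle here. The single point requiring verification is the finiteness of the underlying set of $\Map(X)$, which is the elementary counting fact above; once this is noted, there is nothing left to prove. (One could also remark that this corollary dovetails with the earlier observation that every monoid $M$ embeds in $\Map(M)$ via the Cayley map, but for a \emph{finite} $X$ no such embedding argument is needed, as the direct appeal to Proposition~\ref{p:finite-surj} already suffices.)
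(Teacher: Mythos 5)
Your proposal is correct and matches the paper's intent exactly: the corollary is stated with a \qed precisely because $\Map(X)$ is a finite monoid (of cardinality $|X|^{|X|}$) and Proposition~\ref{p:finite-surj} applies immediately. Nothing more is needed.
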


Note that the monoid $\Map(X)$ is not a group as soon as $X$ contains more than one element.

\subsection{Examples of non-surjunctive monoids}
We recall that it is an open problem to determine whether or not all groups are surjunctive (Gottschalk's conjecture).
In the more general setting of monoids, 
   the answer to this question is negative  
  (cf.~\cite[Example 8.2]{semieden}):

 \begin{theorem}
 \label{t:contains-bicyclic-is-non-surj}
Every monoid containing a submonoid isomorphic to the bicyclic monoid is non-surjunctive.
In particular,
the bicyclic monoid is non-surjunctive.
 \end{theorem}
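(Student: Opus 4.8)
The plan is to produce, for an arbitrary monoid $M$ containing a copy of the bicyclic monoid, a single injective cellular automaton with finite alphabet that fails to be surjective; non-surjunctivity of $M$ then follows immediately from the definition. First I would invoke Proposition~\ref{p:charact-bicyclic-supmonoid} to convert the hypothesis into its most usable form: there exist elements $p,q \in M$ with $pq = 1_M \neq qp$. Indeed, taking the images of the two generators of $B$ under the assumed embedding yields such a pair, since $pq = 1_B$ maps to $1_M$ while $qp \neq 1_B$ remains distinct from $1_M$ by injectivity. These are precisely the data exploited in Example~\ref{ex:contre-ex-ind-res}, and the argument here is the finite-alphabet version of that example.

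Next I would fix a finite alphabet $A$ with $|A| \geq 2$ (for instance $A = \{0,1\}$) and consider the map $\tau \colon A^M \to A^M$ given by $\tau(x) = x \circ L_p$, that is, $\tau(x)(m) = x(pm)$ for all $m \in M$. By Example~\ref{ex}.(c) this is a cellular automaton, with memory set $\{p\}$ and the identity as local defining map, so it is a legitimate test object for surjunctivity.

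I would then check the two required properties. For injectivity the idea is to exhibit an explicit left inverse coming from the \emph{one} relation $pq = 1_M$: the computation $(\tau(x) \circ L_q)(m) = x(pqm) = x(m)$ shows that $\tau(x) \circ L_q = x$ for every configuration $x$, whence $\tau(x) = \tau(x')$ forces $x = x'$. For non-surjectivity the key observation is that $\tau(x)(1_M) = x(p)$ while $\tau(x)(qp) = x(pqp) = x(p)$, using $pqp = (pq)p = p$; thus every configuration in the image of $\tau$ takes equal values at $1_M$ and at $qp$. Since $qp \neq 1_M$ and $|A| \geq 2$, any configuration $y$ with $y(1_M) \neq y(qp)$ lies outside the image, so $\tau$ is not surjective. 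This exhibits an injective, non-surjective cellular automaton with finite alphabet over $M$, proving $M$ non-surjunctive; specializing to $M = B$ gives the final clause about the bicyclic monoid itself.

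The argument is essentially computational and I expect no serious obstacle. The only point demanding genuine care is the non-surjectivity step, where one must identify the correct pair of coordinates that a configuration in the image is forced to share. The choice $\{1_M, qp\}$ is exactly what renders the failure of left-invertibility, $qp \neq 1_M$, visible at the level of configurations, and it is this asymmetry — absent for groups — that makes the construction work.
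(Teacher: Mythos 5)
Your proof is correct and is essentially identical to the paper's: both use the cellular automaton $\tau(x) = x \circ L_p$ from Example~\ref{ex}.(c), establish injectivity via the explicit left inverse $y \mapsto y \circ L_q$ coming from $pq = 1_M$, and establish non-surjectivity by observing that $\tau(x)(1_M) = x(pqp) = \tau(x)(qp)$ forces every image configuration to agree at the distinct points $1_M$ and $qp$. There is nothing to correct.
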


 \begin{proof}
Let $M$ be a monoid containing a submonoid isomorphic to the bicyclic monoid $B$.
Then we can find elements $p,q \in M$ such that $pq = 1_M \not= qp $. 
\par
Let $A$ be any finite set and consider the cellular automaton 
$\tau   \colon A^M \to A^M$
defined by   $\tau(x) := x \circ L_p$ for all $x \in A^M$  (cf. Example~\ref{ex}.(c)).
Observe first that $\tau$ is injective since
$$
\tau(x) \circ L_q = x \circ L_p \circ L_q = x \circ L_{1_M} = x
$$
for all $x \in A^M$.
 \par
On the other hand, we have that
\[
\tau(x)(1_M) = x \circ L_p (1_M) = x(p) = x(pqp) = x \circ L_p (q p) = \tau(x)(q p)
\] 
for all $x \in A^M$. 
Therefore, any configuration lying in the image of $\tau$ must take the same value at $1_M$ and $qp$.
As $1_M \not= q p$, it follows that $\tau$ is not surjective as soon as $A$ has more than one element.
\par
This shows that $M$ is not surjunctive.
\end{proof}

 Recall from Proposition~\ref{p:charact-bicyclic-supmonoid} that a monoid contains a submonoid isomorphic to the bicyclic monoid if and only if it admits an element that is invertible from one side but not from the other.
Thus, the following statement is   a reformulation of Theorem~\ref{t:contains-bicyclic-is-non-surj}.

\begin{corollary}
Every monoid containing an element that is invertible from one side but not from the other is non-surjunctive.
\qed
\end{corollary}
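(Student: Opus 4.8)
The plan is to reduce this statement directly to the two results established just above, namely Proposition~\ref{p:charact-bicyclic-supmonoid} and Theorem~\ref{t:contains-bicyclic-is-non-surj}. The hypothesis that $M$ contains an element $m$ which is invertible from one side but not from the other means precisely that $m$ is either left-invertible but not right-invertible, or right-invertible but not left-invertible. These are exactly conditions (b) and (c) appearing in Proposition~\ref{p:charact-bicyclic-supmonoid}.

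First I would observe that, by the equivalence of (a), (b), and (c) in Proposition~\ref{p:charact-bicyclic-supmonoid}, the existence of such an element $m$ is equivalent to $M$ containing a submonoid isomorphic to the bicyclic monoid. Concretely, whichever of the two one-sided situations occurs, the proposition guarantees that we may produce elements $p,q \in M$ generating a copy of the bicyclic monoid, with $pq = 1_M \neq qp$. I would then invoke Theorem~\ref{t:contains-bicyclic-is-non-surj}, which asserts that every monoid containing a submonoid isomorphic to the bicyclic monoid is non-surjunctive, to conclude at once that $M$ is non-surjunctive.

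There is no genuine obstacle here: the statement is simply a reformulation of Theorem~\ref{t:contains-bicyclic-is-non-surj} obtained by composing it with the characterization of Proposition~\ref{p:charact-bicyclic-supmonoid}. The substantive content—building the injective but non-surjective cellular automaton $\tau(x) = x \circ L_p$, where injectivity follows from $\tau(x) \circ L_q = x$ while non-surjectivity follows from the forced coincidence of values at $1_M$ and $qp$—has already been carried out in the proof of that theorem. Accordingly, the proof of the corollary consists of nothing more than chaining these two previously established facts together, and it can be stated in a single sentence.
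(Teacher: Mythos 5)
Your proposal is correct and matches the paper's own reasoning exactly: the paper presents this corollary as a reformulation of Theorem~\ref{t:contains-bicyclic-is-non-surj} obtained via the equivalence in Proposition~\ref{p:charact-bicyclic-supmonoid}, which is precisely the chain of implications you describe.
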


\begin{corollary}
\label{c:map-X-not-surj}
Let $X$ be an infinite set. Then the symmetric monoid $\Map(X)$ is non-surjunctive.
\end{corollary}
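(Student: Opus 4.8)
The plan is to derive this from Theorem~\ref{t:contains-bicyclic-is-non-surj}, for which it suffices to show that $\Map(X)$ contains a submonoid isomorphic to the bicyclic monoid whenever $X$ is infinite. Rather than construct such a submonoid by hand, I would go through Proposition~\ref{p:charact-bicyclic-supmonoid} and instead exhibit a single element of $\Map(X)$ that is invertible on one side only.

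First I would unwind what one-sided invertibility means in $\Map(X)$, where the monoid operation is composition and the identity element is $\Id_X$ (with the convention $fg = f \circ g$). An element $f \in \Map(X)$ is right-invertible precisely when there is $g$ with $f \circ g = \Id_X$, i.e.\ when $f$ is surjective as a map $X \to X$; dually, $f$ is left-invertible precisely when some $g$ satisfies $g \circ f = \Id_X$, i.e.\ when $f$ is injective. Thus an element that is left-invertible but not right-invertible is exactly an injective but non-surjective self-map of $X$, which is condition~(b) of Proposition~\ref{p:charact-bicyclic-supmonoid}.

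Next I would exhibit such a map using the hypothesis that $X$ is infinite. Choose a countably infinite subset $\{x_0, x_1, x_2, \dots\} \subset X$ and define $f \colon X \to X$ by $f(x_i) := x_{i+1}$ for all $i \in \N$ and $f(x) := x$ for every $x \in X$ outside this subset. Then $f$ is injective, but it is not surjective since $x_0$ lies outside its image. (This $f$ is essentially the map $q$ used earlier to realize the bicyclic monoid inside $\Map(\N)$, extended by the identity off the chosen subset.) By Proposition~\ref{p:charact-bicyclic-supmonoid}, the existence of such an $f$ forces $\Map(X)$ to contain a submonoid isomorphic to the bicyclic monoid, and Theorem~\ref{t:contains-bicyclic-is-non-surj} then yields that $\Map(X)$ is non-surjunctive.

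The argument is short and I do not anticipate a genuine obstacle; the only point that requires care is the bookkeeping in the first step, namely correctly matching the two sides of invertibility in $\Map(X)$ against injectivity and surjectivity of functions once the composition convention is fixed. Getting this correspondence backwards would not change the conclusion, since a surjective-but-not-injective map also exists on an infinite $X$ and would simply trigger condition~(c) of Proposition~\ref{p:charact-bicyclic-supmonoid} instead, but it is worth stating the translation explicitly.
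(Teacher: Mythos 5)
Your proof is correct and follows essentially the same route as the paper: both reduce to Theorem~\ref{t:contains-bicyclic-is-non-surj} by locating a bicyclic submonoid of $\Map(X)$ arising from a countably infinite subset of $X$. The only cosmetic difference is that the paper embeds $\Map(\N)$ (already shown to contain the bicyclic monoid) into $\Map(X)$ by extending maps by the identity, whereas you verify condition (b) of Proposition~\ref{p:charact-bicyclic-supmonoid} directly using the same shift-type map.
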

\begin{proof}
As $X$ contains a countably-infinite set, 
the monoid  $\Map(X)$ contains a submonoid isomorphic to the monoid $\Map(\N)$ and 
hence a submonoid isomorphic to the bicyclic monoid.
\end{proof}

If $V$ is a vector space over a field $K$, then the set
$\End_K(V)$ consisting of all $K$-linear maps $u \colon V \to V$ is a submonoid of $\Map(V)$ and hence a monoid for the composition of maps.

\begin{corollary}
Let $K$ be a field and let $V$ be an infinite-dimensional vector space over $K$.
Then the monoid $\End_K(V)$ is non-surjunctive.
\end{corollary}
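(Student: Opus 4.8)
The plan is to show that $\End_K(V)$ contains a submonoid isomorphic to the bicyclic monoid; the non-surjunctivity will then follow at once from Theorem~\ref{t:contains-bicyclic-is-non-surj}. By Proposition~\ref{p:charact-bicyclic-supmonoid}, it suffices to exhibit a single element of $\End_K(V)$ that is invertible on one side but not on the other. In the monoid $\End_K(V)$, whose operation is composition, an endomorphism is right-invertible precisely when it is surjective and left-invertible precisely when it is injective. Thus what I really need is a linear endomorphism that is injective but not surjective (or, symmetrically, surjective but not injective)---a situation that is impossible in finite dimension but available here.

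To build such an element I would exploit the infinite-dimensionality of $V$. Since $\dim_K V$ is infinite, $V$ admits a basis containing a countably infinite family $(e_n)_{n \in \N}$; write the remaining basis vectors as $(f_j)_{j \in J}$. I would then define two endomorphisms by their action on this basis, extended linearly: a right shift $q \in \End_K(V)$ with $q(e_n) := e_{n+1}$ for all $n \in \N$ and $q(f_j) := f_j$ for all $j \in J$, and a left shift $p \in \End_K(V)$ with $p(e_0) := 0$, $p(e_n) := e_{n-1}$ for $n \geq 1$, and $p(f_j) := f_j$. Both are well defined, since a linear map is freely specified on a basis.

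The verification is then routine: evaluating on basis vectors gives $p \circ q = \Id_V$, while $q \circ p(e_0) = q(0) = 0 \neq e_0$, so $q \circ p \neq \Id_V$. Consequently $q$ is left-invertible, with left inverse $p$, but not right-invertible, since $e_0$ lies outside its image and so $q$ is not surjective; this is exactly condition (b) of Proposition~\ref{p:charact-bicyclic-supmonoid}. That proposition then supplies a submonoid of $\End_K(V)$ isomorphic to the bicyclic monoid, and Theorem~\ref{t:contains-bicyclic-is-non-surj} yields that $\End_K(V)$ is non-surjunctive. I do not expect any genuine obstacle: the sole place where infinite dimension enters is the existence of the countable independent family $(e_n)$, which is precisely what lets a one-sided inverse fail to be a two-sided one, whereas in finite dimension injectivity and surjectivity coincide by rank-nullity and no such element can exist.
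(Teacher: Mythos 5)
Your proof is correct, and it takes a slightly different route from the paper's. The paper reuses the machinery of Corollary~\ref{c:map-X-not-surj}: choosing a basis $(e_x)_{x \in X}$ of $V$, it embeds the whole symmetric monoid $\Map(X)$ into $\End_K(V)$ via $f \mapsto (e_x \mapsto e_{f(x)})$, then invokes the chain $B \hookrightarrow \Map(\N) \hookrightarrow \Map(X) \hookrightarrow \End_K(V)$ (the copy of the bicyclic monoid inside $\Map(\N)$ having been exhibited in Section~2) before applying Theorem~\ref{t:contains-bicyclic-is-non-surj}. You instead bypass $\Map(X)$ entirely: you build explicit linear shift operators $p, q$ on a countable subfamily of a basis, verify $p \circ q = \Id_V \neq q \circ p$, and feed the resulting left-invertible-but-not-right-invertible element into Proposition~\ref{p:charact-bicyclic-supmonoid}, which then hands you the bicyclic submonoid. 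Note that your copy of the bicyclic monoid is genuinely different from the paper's: your $p$ kills the basis vector $e_0$, so it is not induced by any self-map of the basis and does not lie in the embedded image of $\Map(X)$. What the paper's argument buys is brevity given what has already been established (it is a one-line reuse of the previous corollary); what yours buys is self-containedness and conceptual transparency, making visible that the real source of non-surjunctivity is the existence of an injective, non-surjective endomorphism, which is exactly what infinite dimension permits and rank-nullity forbids in finite dimension. One small remark: your general claim that left-invertibility in $\End_K(V)$ is equivalent to injectivity (and right-invertibility to surjectivity) requires choosing complements of subspaces, but your argument never actually needs the nontrivial directions --- you use only that an explicit left inverse exists and that a non-surjective map cannot have a right inverse, both of which are immediate.
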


\begin{proof}
 Let $(e_x)_{x \in X}$ be a basis for $V$.
Then the symmetric monoid $\Map(X)$ embeds into $\End_K(V)$ via the map that sends each $f \in \Map(X)$ to the unique $K$-linear map $u \colon V \to V$  such that $u(e_x) = e_{f(x)}$ for all $x \in X$.
As $X$ is infinite, we conclude that $\End_K(V)$ contains a submonoid isomorphic to $\Map(\N)$ and hence a submonoid isomorphic to the bicyclic monoid.
\end{proof}

\begin{remark}
It would be interesting to give an example of a cancellative non-surjunctive monoid. Note that the
bicyclic monoid is neither left-cancellative  nor right-cancellative.
 \end{remark}

\subsection{Submonoids of surjunctive monoids}

\begin{theorem}
\label{t:sub-surjunctive}
Every submonoid of a surjunctive monoid is itself surjunctive.
\end{theorem}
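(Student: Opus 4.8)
The plan is to reduce surjunctivity of the submonoid $N$ to that of the ambient monoid $M$ by transporting an arbitrary injective cellular automaton over $N$ up to $M$ via the induction construction, applying surjunctivity there, and then pushing the conclusion back down by restriction. The whole argument rests on the induction/restriction formalism developed above, in particular Proposition~\ref{p:ind-rest-inv} and Proposition~\ref{p:ind-res}, so there is very little to do beyond assembling those ingredients in the right order.

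Concretely, let $M$ be surjunctive and let $N \subset M$ be a submonoid. To show that $N$ is surjunctive, I would start with an arbitrary injective cellular automaton $\sigma \colon A^N \to A^N$ over $N$ with $A$ finite, the goal being to prove that $\sigma$ is surjective. First I would form the induced cellular automaton $\sigma^M \in \CA(M,N;A) \subset \CA(M;A)$. Since $\sigma$ is injective, Proposition~\ref{p:ind-res}(ii) guarantees that $\sigma^M$ is injective as well. Now $\sigma^M$ is an injective cellular automaton over $M$ with the finite alphabet $A$, so surjunctivity of $M$ forces $\sigma^M$ to be surjective.

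It then remains to transfer surjectivity back down to $\sigma$. By Proposition~\ref{p:ind-rest-inv}, the maps $\tau \mapsto \tau_N$ and $\sigma \mapsto \sigma^M$ are mutually inverse bijections between $\CA(M,N;A)$ and $\CA(N;A)$; in particular $(\sigma^M)_N = \sigma$. Since $\sigma^M$ is surjective, Proposition~\ref{p:ind-res}(i) tells us that its restriction $(\sigma^M)_N$ is surjective, and this restriction is exactly $\sigma$. Hence $\sigma$ is surjective, which completes the argument.

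I do not expect any genuine obstacle here: the delicate content has already been isolated in Proposition~\ref{p:ind-res}, whose two implications point in opposite directions (induction preserves injectivity, restriction preserves surjectivity), and these are precisely the two directions needed. The one point worth flagging is that the converse implications in Proposition~\ref{p:ind-res} fail in general (Example~\ref{ex:contre-ex-ind-res}), so the argument must use induction for the injectivity step and restriction for the surjectivity step and must not attempt to mix them; the identity $(\sigma^M)_N = \sigma$ supplied by Proposition~\ref{p:ind-rest-inv} is exactly what glues the two halves together.
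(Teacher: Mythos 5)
Your proposal is correct and is essentially identical to the paper's own proof: induce via Proposition~\ref{p:ind-res}(ii) to get injectivity of $\sigma^M$, invoke surjunctivity of $M$, then restrict back using Proposition~\ref{p:ind-rest-inv} and Proposition~\ref{p:ind-res}(i) to conclude that $\sigma = (\sigma^M)_N$ is surjective. Your closing remark about why the two implications of Proposition~\ref{p:ind-res} must be used in precisely these directions is a sound observation, matching the role of Example~\ref{ex:contre-ex-ind-res} in the paper.
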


\begin{proof}
Let $M$ be a surjunctive monoid.
Let $N$ be a submonoid of $M$ and let $A$ be a finite set. 
Suppose that  $\tau \colon A^N \to A^N$ is an injective cellular automaton.   
By Proposition~\ref{p:ind-res}.(ii), the induced cellular automaton $\tau^M \colon A^M \to A^M$ is also injective. Since $M$ is surjunctive, this implies that  $\tau^M$ is  surjective.
Restricting back to $N$, it then follows from  Proposition~\ref{p:ind-rest-inv} and Proposition~\ref{p:ind-res}.(i)  that $\tau = (\tau^M)_N$ is also surjective.
This shows that $N$ is surjunctive.
\end{proof}

\begin{corollary}
\label{c:canc-commutative-monoid-surj}
Every cancellative commutative monoid is surjunctive.
In particular, the additive monoid $\N$ is surjunctive.
\end{corollary}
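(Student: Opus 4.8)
The plan is to reduce everything to the already-known surjunctivity of abelian groups by embedding the monoid into its group of fractions. First I would recall the classical Grothendieck construction. Given a commutative cancellative monoid $M$ (written multiplicatively), one forms the \emph{group of fractions} $G$ as the quotient of $M \times M$ by the equivalence relation $(a,b) \sim (c,d)$ if and only if $ad = bc$. Commutativity guarantees that the componentwise multiplication descends to a well-defined operation on $G$, making $G$ an abelian group in which $[(a,b)]$ plays the role of the formal quotient $ab^{-1}$; cancellativity guarantees that the canonical map $\iota \colon M \to G$ given by $\iota(m) = [(m,1_M)]$ is an \emph{injective} monoid morphism. Thus $M$ embeds as a submonoid of the abelian group $G$.

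Next I would invoke the surjunctivity of abelian groups. Indeed, every abelian group is amenable, and every amenable group is surjunctive by the Garden of Eden theorem for amenable groups proved in~\cite{ceccherini} (as recalled in the introduction). Hence $G$ is a surjunctive monoid.

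Finally, since $M$ is isomorphic to a submonoid of the surjunctive monoid $G$, Theorem~\ref{t:sub-surjunctive} immediately yields that $M$ is surjunctive. The ``in particular'' assertion is then instantaneous: the additive monoid $\N$ is commutative and cancellative, its group of fractions being $\Z$, so $\N$ is surjunctive. The only step demanding genuine care is the construction and verification of the group of fractions, which is precisely where both hypotheses are used in an essential way: commutativity lets us bypass the Ore condition needed in the noncommutative setting, and cancellativity is exactly what makes $\iota$ injective rather than merely a morphism. Once this embedding is in place, no further obstacle arises, since all the substantive content is carried by Theorem~\ref{t:sub-surjunctive} together with the cited surjunctivity of amenable—and in particular abelian—groups.
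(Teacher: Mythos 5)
Your proposal is correct and follows essentially the same route as the paper: embed the cancellative commutative monoid into its Grothendieck group of fractions, invoke the known surjunctivity of abelian groups, and conclude via Theorem~\ref{t:sub-surjunctive}. The only cosmetic difference is that you justify the surjunctivity of abelian groups through amenability and the Garden of Eden theorem, whereas the paper simply cites \cite[Corollary~3.3.7]{livre}; the substance of the argument is identical.
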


\begin{proof}
Every cancellative commutative monoid embeds in an abelian group
via the Grothendieck construction
 and it is known (see e.g. \cite[Corollary~3.3.7]{livre}) that every abelian group is surjunctive.
\end{proof}

Let us  recall the definition of amenability for monoids (see e.g.~\cite{day-amenable-semigroups}, \cite{paterson}).
Let $M$ be a monoid and let    $\ell^\infty(M)$ denote the vector space   of all bounded real-valued maps 
$f \colon M \to \R$.
A \emph{mean} on $M$ is an $\R$-linear map $\mu \colon \ell^\infty(M) \to \R$ such that 
$\inf_{m \in M} f(m) \leq \mu(f) \leq \sup_{m \in M} f(m)$ for all $f \in \ell^\infty(M)$.
One says that a mean $\mu$ on $M$   is   \emph{left-invariant} (resp.~\emph{right-invariant}, resp.~\emph{invariant}) if it satisfies
$\mu(f \circ L_m) = \mu(f)$ (resp.~$\mu(f \circ R_m) = \mu(f)$, resp.~$\mu(f \circ L_m) = \mu(f \circ R_m) = \mu(f)$) for all
$f \in \ell^\infty(M)$ and $m \in M$.
The monoid $M$ is called
\emph{left-amenable} (resp.~\emph{right-amenable}, resp.~\emph{amenable}) if it admits a left-invariant (resp.~right-invariant, resp.~invariant)  mean.
As every commutative monoid is amenable (cf. \cite[(H) Section 4]{day-amenable-semigroups}), 
the following result extends Corollary~\ref{c:canc-commutative-monoid-surj}
(compare~\cite[Corollary 1.2.]{semieden}). 

\begin{corollary}
\label{c:amenable-are-surj}
Every cancellative one-sided amenable monoid is surjunctive.
\end{corollary}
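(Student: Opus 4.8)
The plan is to follow the same strategy as in the proof of Corollary~\ref{c:canc-commutative-monoid-surj}: I would exhibit an embedding of $M$ into an \emph{amenable group} $G$. Once this is done, the surjunctivity of $M$ follows at once, since every amenable group is surjunctive (this is the Garden of Eden theorem of~\cite{ceccherini}, recalled in the Introduction) and every submonoid of a surjunctive monoid is surjunctive by Theorem~\ref{t:sub-surjunctive}. Thus the whole problem reduces to producing the group $G$, and this is where the hypotheses of cancellativity and one-sided amenability enter.

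To build $G$, I would first extract from amenability the relevant \emph{Ore condition}. Assume, to fix ideas, that $M$ is left-amenable and let $\mu \colon \ell^\infty(M) \to \R$ be a left-invariant mean (the right-amenable case is entirely symmetric, with the roles of $L_m$ and $R_m$, and of left and right ideals, interchanged). The key observation is that $\mu(\mathbf{1}_{mM}) = 1$ for every $m \in M$: indeed, $\mathbf{1}_{mM} \circ L_m$ is the constant function $1$, because $mm' \in mM$ for all $m' \in M$, and so left-invariance gives $\mu(\mathbf{1}_{mM}) = \mu(\mathbf{1}_{mM} \circ L_m) = 1$. Since a mean is linear and monotone, two disjoint principal right ideals $aM$ and $bM$ would yield $2 = \mu(\mathbf{1}_{aM}) + \mu(\mathbf{1}_{bM}) = \mu(\mathbf{1}_{aM \cup bM}) \leq 1$, a contradiction. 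Hence $aM \cap bM \neq \emptyset$ for all $a,b \in M$, that is, $M$ satisfies the right Ore condition. Combined with cancellativity, Ore's theorem (see~\cite{clifford-preston}) then provides an embedding of $M$ into its group of right fractions $G = MM^{-1}$.

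The main obstacle is the final step: proving that $G$ is amenable. Here I would transport the mean $\mu$ from $M$ to $G$, exploiting that, again by the right Ore condition, the principal right ideals of $M$ form a downward-directed family, so that $M$ is directed by left-divisibility. Concretely, for $f \in \ell^\infty(G)$ and $s \in M$ one sets $\nu_s(f) := \mu\big(m \mapsto f(s^{-1} m)\big)$ and defines $\nu(f)$ as a limit of the net $(\nu_s)$ along an ultrafilter refining this directed order; one then checks that $\nu$ is a well-defined left-invariant mean on $G$, so that $G$ is amenable. Alternatively, one may simply invoke the classical fact that the group of right fractions of a left-amenable cancellative monoid is amenable (see~\cite{paterson}). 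In either case, $G$ is an amenable group containing $M$ as a submonoid, and the conclusion follows from the reduction of the first paragraph. I expect the verification of left-invariance of $\nu$ over all of $G$ (as opposed to merely over the subsemigroup $M$) to be the delicate point, precisely because one must upgrade the one-sided invariance available on $M$ to full invariance on the group.
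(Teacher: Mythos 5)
Your proposal is correct and takes essentially the same route as the paper: the paper's proof is exactly your first-paragraph reduction, citing \cite[Corollary~3.6]{wilde-witz} for the fact that every cancellative one-sided amenable monoid embeds in an amenable group and \cite[Theorem~5.9.1]{livre} for the surjunctivity of amenable groups, with Theorem~\ref{t:sub-surjunctive} supplying the passage to submonoids. The only difference is that you go on to sketch a proof of the cited embedding theorem (Ore condition from the invariant mean, Ore localization, amenability of the group of fractions); your first two steps are sound, and the remaining delicate step you correctly identify can be closed by the classical result you mention from \cite{paterson}, so this adds detail rather than a new strategy.
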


\begin{proof} 
It is known \cite[Corollary~3.6]{wilde-witz} that every cancellative one-sided amenable monoid embeds  in an amenable group. 
On the other hand, every amenable group is surjunctive
(see \cite[Theorem~5.9.1]{livre}).
\end{proof}

\begin{remark}
The bicyclic monoid is known to be amenable (see e.g. \cite[Example 2, page 311]{duncan-namioka}).
As the bicyclic monoid is non-surjunctive by Theorem~\ref{t:contains-bicyclic-is-non-surj},
this shows that Corollary~\ref{c:amenable-are-surj} is false  without the cancellability hypothesis.
\end{remark}

\begin{corollary}
Every free monoid is surjunctive.
\end{corollary}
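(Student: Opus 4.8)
The plan is to realize the free monoid as a submonoid of a free group and then invoke the closure of surjunctivity under passage to submonoids. First I would recall the standard fact that the free monoid $X^\star$ on a set $X$ embeds as a submonoid of the free group $F(X)$ on the same set: the submonoid of $F(X)$ generated by $X$ is, by the universal properties of $X^\star$ and $F(X)$, canonically isomorphic to $X^\star$ (concretely, it is the submonoid of positive, i.e. inverse-free, reduced words). This is the exact analogue of the embeddings used in the proofs of Corollary~\ref{c:canc-commutative-monoid-surj} and Corollary~\ref{c:amenable-are-surj}, where a cancellative commutative (resp. cancellative one-sided amenable) monoid was embedded in an abelian (resp. amenable) group.

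Next I would use the fact that every free group is surjunctive. Indeed, free groups are residually finite, as already noted in the remark following Proposition~\ref{p:res-finite-dynam}, and residually finite groups are surjunctive (Lawson; alternatively, free groups are sofic and sofic groups are surjunctive). As in the abelian and amenable cases treated just above, the surjunctivity of $F(X)$ as a group coincides with its surjunctivity as a monoid in the present sense: a group is canonically isomorphic to its opposite via $g \mapsto g^{-1}$, and surjunctivity is invariant under monoid isomorphism, so the two notions agree and $F(X)$ is surjunctive as a monoid.

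Finally I would apply Theorem~\ref{t:sub-surjunctive}, according to which every submonoid of a surjunctive monoid is itself surjunctive. Since $X^\star$ is (isomorphic to) a submonoid of the surjunctive monoid $F(X)$, it follows at once that $X^\star$ is surjunctive. The argument is essentially routine given Theorem~\ref{t:sub-surjunctive}; the only point requiring slight care is the bookkeeping identifying the classical surjunctivity of the free group with its surjunctivity as a monoid in the sense of this paper, which is handled exactly as in the two preceding corollaries, so no genuine obstacle arises. One should merely ensure that the embedding $X^\star \hookrightarrow F(X)$ and the residual finiteness of $F(X)$ are invoked for an arbitrary (possibly infinite) set $X$, which causes no difficulty.
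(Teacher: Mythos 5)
Your proposal is correct and follows essentially the same route as the paper: embed the free monoid $X^\star$ into the free group on $X$, invoke the surjunctivity of free groups, and conclude via Theorem~\ref{t:sub-surjunctive}. The extra care you take in identifying group surjunctivity with monoid surjunctivity (via the opposite monoid) and in justifying the surjunctivity of free groups through residual finiteness is sound but already implicit in the paper's setup, so no genuinely different argument is involved.
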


\begin{proof}
If $M$ is the free monoid based on a set $X$, then $M$ naturally embeds in the free group based on $X$.
On the other hand, free groups are known to be surjunctive
(cf. \cite[Corollary~3.3.5]{livre}).
\end{proof}

\begin{corollary}
Let $M$ be a non-surjunctive monoid that is isomorphic to its opposite
 monoid (e.g. $M$ is the bicyclic monoid or $M$ is a non-surjunctive group) and $A$  a set with more than one element. 
Then the monoid $\CA(M;A)$ is non-surjunctive.
\end{corollary}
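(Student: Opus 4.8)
The plan is to exhibit a submonoid of $\CA(M;A)$ that is isomorphic to $M$, and then to invoke Theorem~\ref{t:sub-surjunctive} in its contrapositive form: since a submonoid of a surjunctive monoid is surjunctive, any monoid that contains a non-surjunctive submonoid must itself be non-surjunctive. Thus it suffices to embed a copy of $M$ (equivalently, of $M^{\mathrm{op}}$) into $\CA(M;A)$.

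To build the embedding, I would use the cellular automata of the form $\tau_m \colon A^M \to A^M$, $\tau_m(x) := x \circ L_m$, each of which is a cellular automaton by Example~\ref{ex}.(c) (with memory set $\{m\}$). The key point is the composition rule: for $x \in A^M$ and $u \in M$ one has $(\tau_m \circ \tau_{m'})(x)(u) = \tau_{m'}(x)(mu) = x(m'mu) = \tau_{m'm}(x)(u)$, so that $\tau_m \circ \tau_{m'} = \tau_{m'm}$. Since moreover $\tau_{1_M} = \Id_{A^M}$ (as $L_{1_M} = \Id_M$), the assignment $m \mapsto \tau_m$ reverses the order of multiplication; in other words, it defines a monoid morphism $\Phi \colon M^{\mathrm{op}} \to \CA(M;A)$.

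Next I would verify that $\Phi$ is injective. Because $\tau_m(x)(1_M) = x(m)$ for every configuration $x$, the equality $\tau_m = \tau_{m'}$ forces $x(m) = x(m')$ for all $x \in A^M$; as $A$ has more than one element, distinct elements of $M$ are separated by some configuration, whence $m = m'$. Therefore $\Phi$ is an embedding, and $\CA(M;A)$ contains a submonoid isomorphic to $M^{\mathrm{op}}$. The hypothesis $M \cong M^{\mathrm{op}}$ now enters decisively: surjunctivity depends only on the isomorphism type of a monoid, so $M^{\mathrm{op}}$ is non-surjunctive together with $M$. Consequently $\CA(M;A)$ contains the non-surjunctive submonoid $\Phi(M^{\mathrm{op}}) \cong M$, and Theorem~\ref{t:sub-surjunctive} yields that $\CA(M;A)$ is non-surjunctive. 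I do not expect a genuine obstacle here; the only delicate point is keeping track of the composition order, which is exactly what forces the embedding to be of $M^{\mathrm{op}}$ rather than $M$ — and this is precisely the reason the hypothesis $M \cong M^{\mathrm{op}}$ is required.
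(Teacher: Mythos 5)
Your proposal is correct and follows essentially the same route as the paper: both embed $M^{\mathrm{op}}$ into $\CA(M;A)$ via $m \mapsto \tau_m$ with $\tau_m(x) = x \circ L_m$, check the anti-homomorphism property and injectivity (using $|A| \geq 2$), and then invoke the hypothesis $M \cong M^{\mathrm{op}}$ together with Theorem~\ref{t:sub-surjunctive} to conclude. The paper's proof is merely terser, leaving the composition computation and the final contrapositive step implicit.
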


\begin{proof}
For each $m \in M$, the map $\tau_m \colon A^M \to A^M$, defined by $\tau_m(x) = x \circ L_m$ for all $x \in A^M$, is a cellular automaton
(cf. Example~\ref{ex}.(c)).
Consider the map $\varphi \colon M \to \CA(M;A)$ defined by $\varphi(m) = \tau_m$ for all $m \in M$.
We clearly have $\varphi(1_M) = \Id_{A^M}$ and $\varphi(m m') = \varphi(m') \varphi(m)$ for all $m,m' \in M$.
Moreover, $\varphi$ is injective since $A$ has more than one element.
As $M$ is isomorphic to its opposite, 
we conclude that the monoid $M$ embeds in the monoid  $\CA(M;A)$.
\end{proof}

 \subsection{Another characterization of surjunctive monoids}

The following result yields a characterization of surjunctive monoids involving
 the bicyclic monoid.

\begin{theorem}
\label{t:non-surj-CA-contient-bicyclique}
Let $M$ be a monoid. 
Then the following conditions are equivalent: 
\begin{enumerate}[{\rm (a)}]
\item 
the monoid $M$ is non-surjunctive;
\item 
there exists a finite set $A$ such that the monoid $\CA(M;A)$ contains a submonoid
  isomorphic to the bicyclic monoid.
\end{enumerate}
\end{theorem}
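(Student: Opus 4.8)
The plan is to prove the two implications separately, working inside the monoid $\CA(M;A)$ (a submonoid of $\Map(A^M)$ under composition) and invoking Proposition~\ref{p:charact-bicyclic-supmonoid}, which reduces the presence of a bicyclic submonoid to the existence of a pair of elements $s,t$ with $st = 1 \neq ts$.

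For the implication (b)$\Rightarrow$(a), which I expect to be routine, I would start from a bicyclic submonoid of $\CA(M;A)$ and let $P,Q \in \CA(M;A)$ be the images of the canonical generators $p,q \in B$ under an isomorphism, so that $P \circ Q = \Id_{A^M} \neq Q \circ P$ (recall $pq = 1 \neq qp$ in $B$). The relation $P \circ Q = \Id_{A^M}$ forces $Q$ to be injective, while $Q \circ P \neq \Id_{A^M}$ prevents $Q$ from being surjective, since a bijective $Q$ would make $P$ its two-sided inverse and give $Q \circ P = \Id_{A^M}$. Thus $Q$ is an injective, non-surjective cellular automaton over the finite alphabet $A$, witnessing that $M$ is non-surjunctive.

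The substantive direction is (a)$\Rightarrow$(b). Here I would first extract from the non-surjunctivity of $M$ a finite alphabet $A$ and an injective, non-surjective cellular automaton $\tau \in \CA(M;A)$. The crux is then the following lemma, which I would isolate: every injective cellular automaton $\tau \colon A^M \to A^M$ over a finite alphabet admits a cellular automaton left inverse $\sigma \in \CA(M;A)$, i.e.\ $\sigma \circ \tau = \Id_{A^M}$. Granting this, I set $s := \sigma$ and $t := \tau$; then $s \circ t = \Id_{A^M}$, whereas $t \circ s = \tau \circ \sigma \neq \Id_{A^M}$ since $\tau$ is not surjective (a right inverse would force surjectivity). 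Proposition~\ref{p:charact-bicyclic-supmonoid} applied to $\CA(M;A)$ then produces a submonoid isomorphic to the bicyclic monoid, which is exactly (b).

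To prove the lemma — the step I expect to be the main obstacle — I would argue as follows. Since $A$ is finite, $A^M$ is compact Hausdorff, so by Proposition~\ref{p:inj-hom-unif-emb} the injective continuous map $\tau$ is a uniform embedding onto the closed, $M$-invariant subset $Y := \tau(A^M)$; in particular the inverse $\psi := \tau^{-1} \colon Y \to A^M$ is equivariant and uniformly continuous. Uniform continuity of $\psi$ for the prodiscrete structure, applied to the base entourage of $A^M$ recording agreement at $1_M$, yields a finite subset $S \subset M$ such that $\psi(y)(1_M)$ depends only on $y\vert_S$; that is, there is a map $\nu \colon \{y\vert_S : y \in Y\} \to A$ with $\psi(y)(1_M) = \nu(y\vert_S)$, and by equivariance $\psi(y)(m) = \nu((my)\vert_S)$ for all $m \in M$. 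The key point is that, because $A$ (hence $A^S$) is finite, I can extend $\nu$ arbitrarily to a total map $\widetilde{\nu} \colon A^S \to A$; by Theorem~\ref{t:ca-iff-equiv-unif-cont} the formula $\sigma(x)(m) := \widetilde{\nu}((mx)\vert_S)$ then defines a genuine cellular automaton $\sigma \in \CA(M;A)$. Finally, since $Y$ is $M$-invariant, for $y \in Y$ one has $(my)\vert_S \in \{z\vert_S : z \in Y\}$, where $\widetilde{\nu}$ agrees with $\nu$, so $\sigma\vert_Y = \psi$; as $\tau(x) \in Y$ for every $x \in A^M$, this gives $\sigma \circ \tau = \Id_{A^M}$. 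The delicate part is precisely the passage from the partial local rule $\nu$, defined only on the patterns occurring in the image subshift $Y$, to a total local rule on $A^S$, which is what makes finiteness of the alphabet indispensable; everything else is bookkeeping with equivariance and uniform continuity.
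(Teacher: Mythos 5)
Your proof is correct and is essentially the paper's own argument: for (a)$\Rightarrow$(b) the paper likewise uses compactness of $A^M$ and Proposition~\ref{p:inj-hom-unif-emb} to make $\tau$ a uniform embedding onto $Y=\tau(A^M)$, extracts a finite set $S$ from the uniform continuity of $\tau^{-1}$, extends the resulting partial local rule to all of $A^S$ (by a constant $a_0$) to obtain $\sigma \in \CA(M;A)$ with $\sigma\circ\tau=\Id_{A^M}\neq\tau\circ\sigma$, and concludes via Proposition~\ref{p:charact-bicyclic-supmonoid}, while (b)$\Rightarrow$(a) is handled by the same two-line argument you give. The only quibble is your closing emphasis: extending the partial rule $\nu$ to a total rule on $A^S$ is possible for any nonempty alphabet, so finiteness of $A$ is indispensable only for the compactness step (uniform continuity of $\tau^{-1}$ via Proposition~\ref{p:inj-hom-unif-emb}), not for the extension itself.
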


\begin{proof}
Suppose first that $M$ is non-surjunctive. 
Then we can find a finite set $A$ and a
cellular automaton $\tau \colon A^M \to A^M$ which is injective and non-surjective.
We claim that there exists a cellular automaton $\sigma \colon A^M \to A^M$ such that
\begin{equation}
\label{e:sigma-tau}
\sigma \circ \tau = \Id_{A^M}.
\end{equation} 
Indeed, let $Y := \tau(A^M)$ denote the image of $\tau$. 
Since $A^M$ is a compact Hausdorff space,
$\tau$ is a uniform embedding by virtue of Proposition~\ref{p:inj-hom-unif-emb}. 
 This means that  the map $\phi \colon A^M \to Y$, defined by $\phi(x) := \tau(x)$ for all $x \in A^M$, is a uniform isomorphism.
The uniform continuity of $\phi^{-1}$ implies that
there exists a finite subset $S \subset M$ 
having the following property: 
if two configurations $x_1,x_2 \in A^M$ are such that the configurations $\tau(x_1)$ and $\tau(x_2)$ coincide on $S$, 
then $x_1(1_M) = x_2(1_M)$. 
Fix an arbitrary element  $a_0 \in A$. 
We can then define a map $\nu \colon A^S \to A$  by setting,
for all $u \in A^S$,
\[
\nu(u) := 
\begin{cases} 
x(1_M) & \mbox{if there exists $x \in A^M$ such that $u = (\tau(x))\vert_S$},\\
a_0 & \mbox{otherwise}.
\end{cases} 
\]
Consider the cellular automaton $\sigma \colon A^M \to A^M$  with memory set $S$ and local defining map $\nu$.
Then, for all $x \in A^M$ and $m \in M$, we have that
\begin{align*}
\sigma \circ \tau (x)(m) &=
\sigma(\tau(x))(m) \\
&= \nu((m \tau(x))\vert_S) \\ 
&= \nu((\tau(m x))\vert_S) && \text{(since $\tau$ is $M$-equivariant)} \\
&= mx(1_M) && \text{(by definition of $\nu$)} \\
 & =  x(m), 
\end{align*}
so that $\sigma \circ \tau(x) = x$.
This gives us~\eqref{e:sigma-tau}.
\par
On the other hand, we have that
\begin{equation}
\label{e:tau-sigma-not-1}
 \tau \circ \sigma  \not= \Id_{A^M}
\end{equation}
since $\tau$ is not surjective. 
It follows from \eqref{e:sigma-tau} and \eqref{e:tau-sigma-not-1} that
$\tau$ is left-invertible but not right-invertible in the monoid $\CA(M;A)$.
Thus, the monoid $\CA(M;A)$ contains a submonoid isomorphic to the bicyclic monoid by Proposition~\ref{p:charact-bicyclic-supmonoid}.
 This proves that (a) implies (b).
 \par
Conversely, suppose that there exist a finite set $A$ such that the monoid $\CA(M;A)$ contains a submonoid isomorphic to the bicyclic monoid.
From the properties of the bicyclic monoid, we deduce that there exist elements $\sigma,\tau \in \CA(M;A)$ 
satisfying~\eqref{e:sigma-tau} and~\eqref{e:tau-sigma-not-1}. 
It follows that $\tau$ is injective but not bijective. 
 Thus, the monoid  $M$ is not surjunctive.
\end{proof}

\subsection{Surjunctivity of residually finite monoids}

\begin{theorem} 
\label{t:res-finite-surj}
Every residually finite monoid is surjunctive.
\end{theorem}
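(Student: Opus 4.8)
The plan is to exploit the density of periodic configurations, which is exactly the content of Proposition~\ref{p:res-finite-dynam} for residually finite monoids. So let $M$ be residually finite, let $A$ be a finite set, and let $\tau \colon A^M \to A^M$ be an injective cellular automaton; I must show that $\tau$ is surjective. The overall strategy is to prove that the image $\tau(A^M)$ is both closed and dense in $A^M$, and hence equal to all of $A^M$.

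First I would set up the topological side. Since $A$ is finite, $A^M$ is compact by the Tychonoff product theorem (Example~\ref{ex:prodiscrete-top}) and Hausdorff for the prodiscrete topology. A cellular automaton is uniformly continuous, hence continuous, so $\tau(A^M)$ is a compact, and therefore closed, subset of the Hausdorff space $A^M$. It thus suffices to show that $\tau(A^M)$ contains a dense subset, and the natural candidate is the set $\Per(A^M)$ of periodic configurations, which is dense in $A^M$ by the implication (a)$\Rightarrow$(b) of Proposition~\ref{p:res-finite-dynam}.

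The heart of the argument is to show that every periodic configuration lies in the image of $\tau$. By Proposition~\ref{p:per-union-e-gamma}, one has $\Per(A^M) = \bigcup_{\gamma \in \CC_f(M)} \Inv(\gamma)$, so it is enough to treat each finite index congruence relation $\gamma$ separately. Fix such a $\gamma$. By Example~\ref{x:inv-gamma-shift} the set $\Inv(\gamma)$ consists precisely of the configurations that are constant on each $\gamma$-class, so $\Inv(\gamma)$ is in natural bijection with $A^{M/\gamma}$ and is therefore \emph{finite}, since $A$ is finite and $M/\gamma$ is finite. Now, because $\tau$ is $M$-equivariant, Proposition~\ref{p:E-periodic}.(iii) gives $\tau(\Inv(\gamma)) \subseteq \Inv(\gamma)$. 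Thus $\tau$ restricts to a self-map of the finite set $\Inv(\gamma)$. Since $\tau$ is injective, this restriction is an injective self-map of a finite set and hence a bijection of $\Inv(\gamma)$ onto itself; in particular $\Inv(\gamma) \subseteq \tau(A^M)$. Taking the union over all $\gamma \in \CC_f(M)$ yields $\Per(A^M) \subseteq \tau(A^M)$.

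Combining the two parts finishes the proof: $\tau(A^M)$ is a closed subset of $A^M$ containing the dense subset $\Per(A^M)$, so $\tau(A^M) = A^M$, i.e.\ $\tau$ is surjective, and $M$ is surjunctive. The step I expect to be the crux is the finiteness-plus-injectivity argument on each $\Inv(\gamma)$: this is the only place where the injectivity hypothesis on $\tau$ is actually used, and it is precisely the passage from ``injective on the whole (infinite) space $A^M$'' to ``bijective on each finite invariant piece'' that makes the argument work. Everything else is soft input — compactness to get closedness, residual finiteness to get density — so the real content is the interplay between the equivariance of $\tau$ and the finiteness of the invariant sets $\Inv(\gamma)$.
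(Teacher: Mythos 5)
Your proof is correct and follows essentially the same route as the paper's own argument: decompose $\Per(A^M)$ as $\bigcup_{\gamma \in \CC_f(M)} \Inv(\gamma)$, use equivariance (Proposition~\ref{p:E-periodic}.(iii)) plus injectivity on each finite set $\Inv(\gamma)$ to get $\tau(\Inv(\gamma)) = \Inv(\gamma)$, and then combine density of $\Per(A^M)$ (Proposition~\ref{p:res-finite-dynam}) with closedness of $\tau(A^M)$ from compactness. The only difference is purely presentational — you state $\Inv(\gamma) \subseteq \tau(A^M)$ directly, while the paper applies $\tau$ to both sides of the union — so there is nothing to add.
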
 

\begin{proof}
Let $M$ be a residually finite monoid and $A$ a finite set. 
Suppose that  $\tau \colon A^M \to A^M$ is an injective cellular automaton.
Let us show that $\tau$ is surjective.
\par
By Proposition~\ref{p:per-union-e-gamma}, the set $\Per(A^M)$ of periodic configurations in $A^M$ satisfies
\begin{equation}
\label{e:per-config-union-inv}
\Per(A^M) = \bigcup_{\gamma \in \CC_f(M)} \Inv(\gamma),
\end{equation}
where $\CC_f(M)$ denotes the set of finite index congruence relations on $M$.
\par
Let $\gamma \in \CC_f(M)$. 
Then the set $\Inv(\gamma)$ consists of all configurations $x \in A^M$ that are constant on each equivalence class of $\gamma$
(see Example~\ref{x:inv-gamma-shift}).
As $\gamma$ has only finitely many equivalence classes  and $A$ is finite,
we deduce that $\Inv(\gamma)$ is finite.
On the other hand,
since $\tau$ is $M$-equivariant, it follows from Proposition~\ref{p:E-periodic}.(iii)  that   
$\tau(\Inv(\gamma)) \subset \Inv(\gamma)$.
 As $\tau$ is injective and every injective map from a finite set into itself is surjective, 
we deduce that $\tau(\Inv(\gamma)) = \Inv(\gamma)$.
 \par 
Therefore, by applying $\tau$ to both sides of~\eqref{e:per-config-union-inv}, we  get
\begin{align*}
\tau(\Per(A^M)) 
&= \tau\left(\bigcup_{\gamma \in \CC_f(M)} \Inv(\gamma)\right) \\
&= \bigcup_{\gamma \in \CC_f(M)} \tau(\Inv(\gamma) ) \\
&= \bigcup_{\gamma \in \CC_f(M)} \Inv(\gamma) \\
&= \Per(A^M),
\end{align*} 
 so that
$$
\Per(A^M) = \tau(\Per(A^M)) \subset \tau(A^M) \subset A^M.
$$ 
As $\Per(A^M)$ is dense in $A^M$ by Proposition~\ref{p:res-finite-dynam} 
and $\tau(A^M)$ is closed in $A^M$ since $\tau$ is continuous and $A^M$ is a compact  Hausdorff space,  
we conclude  that $\tau(A^M) = A^M$. Therefore $\tau$ is surjective.
\end{proof}

We shall give an alternative proof of Theorem~\ref{t:res-finite-surj} in Subsection~\ref{subsec:closed-surjun}.

\begin{corollary}
\label{c:fg-comm-surj}
Every finitely generated commutative monoid is surjunctive.
\end{corollary}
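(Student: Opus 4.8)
The plan is to deduce this from the immediately preceding Theorem~\ref{t:res-finite-surj} by showing that every finitely generated commutative monoid is residually finite; since residual finiteness implies surjunctivity, this gives the result at once. Note that one cannot route this through Corollary~\ref{c:canc-commutative-monoid-surj}, because a finitely generated commutative monoid need not be cancellative, so the Grothendieck embedding into an abelian group is unavailable. Residual finiteness is the natural replacement, and it is exactly the input required by Theorem~\ref{t:res-finite-surj}.

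First I would write $M = \N^n / \gamma$, presenting $M$ as a quotient of the free commutative monoid $\N^n$ on a finite generating set by a congruence relation $\gamma \subset \N^n \times \N^n$. To prove residual finiteness it suffices, by the equivalence of conditions (a) and (e) in Proposition~\ref{p:res-finite-dynam}, to separate any two distinct elements of $M$ by a finite index congruence. Under the correspondence between congruences on $M$ and congruences on $\N^n$ containing $\gamma$, this amounts to the following: for every pair $(u,v) \in \N^n \times \N^n$ with $(u,v) \notin \gamma$, one must produce a finite index congruence $\delta$ on $\N^n$ with $\gamma \subset \delta$ and $(u,v) \notin \delta$.

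The key step, and the point I expect to be the main obstacle, is the classical fact that every finitely generated commutative semigroup is residually finite, due to Mal'cev. Its proof proceeds by choosing, via Zorn's lemma applied to the set of congruences on $\N^n$ that contain $\gamma$ but omit the pair $(u,v)$ (directed unions of congruences are again congruences), a maximal such congruence $\delta$. One then shows that maximality forces $\N^n/\delta$ to be finite: the images of $u$ and $v$ are distinct but any further proper identification collapses them, and a combinatorial analysis of the action of the finitely many generators, controlled by Dickson's lemma on the order structure of $\N^n/\delta$, bounds the number of classes. In the write-up I would simply cite this residual finiteness result rather than reproduce the finiteness argument, which is the delicate part.

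Finally I would assemble the pieces. A separating homomorphism from $M$ into a finite commutative semigroup has image $\varphi(M)$ which is itself a finite commutative monoid, with identity $\varphi(1_M)$, and $\varphi \colon M \to \varphi(M)$ is a surjective monoid morphism distinguishing the chosen pair; hence $M$ is residually finite in the monoid sense. Theorem~\ref{t:res-finite-surj} then shows that $M$ is surjunctive, which completes the proof.
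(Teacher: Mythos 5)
Your proposal is correct and follows essentially the same route as the paper: the paper's proof is exactly to cite Mal'cev's theorem (with references to Lallement and Carlisle) that every finitely generated commutative monoid is residually finite, and then invoke Theorem~\ref{t:res-finite-surj}. Your additional sketch of Mal'cev's argument and the observation that Corollary~\ref{c:canc-commutative-monoid-surj} cannot be used (for lack of cancellativity) are sound but not part of the paper's proof, which simply cites the residual finiteness result as you ultimately do.
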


\begin{proof}
By a result of Mal'cev~\cite{malcev} (see also~\cite{lallement} and~\cite{carlisle}), 
every finitely generated  commutative monoid is  residually finite.  
\end{proof}

The multiplicative monoid $\N$ is commutative but 
neither finitely generated (since there are infinitely many primes) nor cancellative (since $0$ is absorbing).
Therefore, it  satisfies neither the hypotheses of Corollary~\ref{c:fg-comm-surj} nor those of Corollary~\ref{c:canc-commutative-monoid-surj}.
However, it is residually finite. 
Indeed, if $n_1,n_2$ are distinct elements in $\N$ and $n$ is  an integer such that $n> \max(n_1,n_2)$, then reduction modulo $n$ yields a monoid morphism
$\varphi \colon (\N,\times) \to (\Z/n\Z,\times)$ such that $\varphi(n_1) \not= \varphi(n_2)$.
Therefoer, we also get  the following result:

\begin{corollary}
The multiplicative monoid $\N$ is surjunctive.
\qed
\end{corollary}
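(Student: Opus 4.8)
The plan is to reduce the statement directly to Theorem~\ref{t:res-finite-surj}, which asserts that every residually finite monoid is surjunctive. The only thing that needs to be verified is that the multiplicative monoid $\N$ is residually finite, and this has in fact just been checked in the paragraph preceding the corollary. First I would recall that argument: given two distinct elements $n_1, n_2 \in \N$, one picks any integer $n > \max(n_1, n_2)$, so that $n_1$ and $n_2$ remain distinct modulo $n$, and then observes that reduction modulo $n$ is a monoid morphism $\varphi \colon (\N, \times) \to (\Z/n\Z, \times)$ into a finite (multiplicative) monoid with $\varphi(n_1) \neq \varphi(n_2)$. Since such a morphism separating any given pair of distinct points exists, the monoid $(\N, \times)$ satisfies the definition of residual finiteness.

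Having established residual finiteness, the conclusion is immediate: the multiplicative monoid $\N$ is surjunctive by Theorem~\ref{t:res-finite-surj}. There is essentially no obstacle here, since the residual finiteness was the whole content of the argument and it is elementary; the point of the corollary is precisely to exhibit an example, namely $(\N, \times)$, which is covered by the residually finite case but by neither Corollary~\ref{c:fg-comm-surj} nor Corollary~\ref{c:canc-commutative-monoid-surj}, as noted in the surrounding discussion (it is commutative but neither finitely generated nor cancellative).
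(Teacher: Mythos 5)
Your proposal is correct and coincides with the paper's own argument: the paper establishes residual finiteness of $(\N,\times)$ in the paragraph immediately preceding the corollary, via reduction modulo an integer $n > \max(n_1,n_2)$, and then invokes Theorem~\ref{t:res-finite-surj}, exactly as you do. Nothing is missing.
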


A monoid $M$ is called \emph{linear} 
if there exist an integer $n \geq 1$ and a field $K$ such that $M$ embeds in the multiplicative monoid of $n \times n$ matrices with coefficients in $K$.

\begin{corollary}
Every finitely generated linear monoid is surjunctive.
\end{corollary}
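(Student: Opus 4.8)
The plan is to reduce to Theorem~\ref{t:res-finite-surj} by proving that every finitely generated linear monoid is residually finite. Let $M$ be such a monoid; by definition there are an integer $n \geq 1$, a field $K$, and an embedding of $M$ into the multiplicative monoid $\M_n(K)$ of $n \times n$ matrices over $K$. I would identify $M$ with its image and fix a finite generating set $g_1, \dots, g_k \in \M_n(K)$. Let $R \subset K$ be the subring generated by $1$ together with all matrix entries of $g_1, \dots, g_k$. Since matrix multiplication involves only the ring operations on entries, every element of $M$, being a finite product of the $g_i$, has all its entries in $R$; hence $M \subset \M_n(R)$. Note that $R$ is a finitely generated $\Z$-algebra and, as a subring of the field $K$, an integral domain.

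First I would separate a given pair of distinct elements $m, m' \in M$. Viewed as matrices they differ in some entry, so there are indices $i,j$ with $r := m_{ij} - m'_{ij} \neq 0$ in $R$. The key step is to produce a ring homomorphism from $R$ to a finite field that does not annihilate $r$. For this I would pass to the localization $R[1/r]$, which is again a nonzero finitely generated $\Z$-algebra because $R$ is a domain and $r \neq 0$. Choosing any maximal ideal $\mathfrak{n}$ of $R[1/r]$, the Nullstellensatz for finitely generated $\Z$-algebras guarantees that the residue field $F := R[1/r]/\mathfrak{n}$ is finite. The composite $\psi \colon R \hookrightarrow R[1/r] \twoheadrightarrow F$ then sends $r$ to a unit of $F$, so in particular $\psi(r) \neq 0$.

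Applying $\psi$ entrywise yields a monoid morphism $\Psi \colon \M_n(R) \to \M_n(F)$, which respects products precisely because $\psi$ is a ring homomorphism, and $\M_n(F)$ is a finite monoid since $F$ is finite. Restricting $\Psi$ to $M$ gives a monoid morphism from $M$ into a finite monoid satisfying $\Psi(m)_{ij} - \Psi(m')_{ij} = \psi(r) \neq 0$, whence $\Psi(m) \neq \Psi(m')$. As $m, m'$ were arbitrary distinct elements of $M$, this shows that $M$ is residually finite, and Theorem~\ref{t:res-finite-surj} then yields that $M$ is surjunctive.

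The routine parts are the inclusion $M \subset \M_n(R)$ and the verification that entrywise reduction is a monoid morphism into a finite monoid. The one substantial input, which I would flag as the crux, is the commutative-algebra fact that a field which is finitely generated as a $\Z$-algebra is finite (equivalently, that the residue fields at the maximal ideals of a finitely generated $\Z$-algebra are finite). It is exactly this Nullstellensatz-type statement that forces the separating quotient to be a \emph{finite} field rather than merely an arbitrary integral domain, and hence makes the separating morphism land in a finite monoid; the finite-generation hypothesis on $M$ is used precisely to ensure that $R$ is a finitely generated $\Z$-algebra.
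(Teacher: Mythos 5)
Your proposal is correct, and its skeleton is exactly that of the paper: reduce surjunctivity to residual finiteness and invoke Theorem~\ref{t:res-finite-surj}. The difference lies in how residual finiteness of finitely generated linear monoids is obtained. The paper disposes of it in one line by citing Mal'cev's theorem \cite{malcev-representation}; you instead prove it from scratch, and your argument --- pass to the subring $R \subset K$ generated by $1$ and the entries of the generators, so that $M \subset \M_n(R)$ with $R$ a finitely generated $\Z$-algebra that is a domain; separate $m \neq m'$ by a nonzero entry difference $r$; localize to $R[1/r]$ and use the Nullstellensatz over $\Z$ (maximal ideals of finitely generated $\Z$-algebras have finite residue fields) to obtain a ring homomorphism $\psi \colon R \to F$ into a finite field with $\psi(r) \neq 0$; reduce entrywise --- is essentially the standard modern proof of Mal'cev's theorem. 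All the steps check out: $R[1/r]$ is nonzero because $R$ is a domain and $r \neq 0$, it is finitely generated over $\Z$, and entrywise reduction is a monoid morphism into the finite monoid $\M_n(F)$ carrying the identity matrix to the identity matrix (note that the image of $M$ in $\M_n(K)$, being a submonoid, does contain the identity matrix, whose entries $0,1$ lie in $R$). What the paper's route buys is brevity; what yours buys is self-containedness, an argument that is valid for monoids verbatim --- no inverses are ever used, so there is no need to remark that Mal'cev's group-theoretic statement extends to monoids --- and a precise localization of where the finite-generation hypothesis enters: it is what makes $R$ a finitely generated $\Z$-algebra, which in turn is what forces the separating residue field to be finite.
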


\begin{proof}
By a result of Mal'cev \cite{malcev-representation},every finitely generated linear monoid is residually finite. 
 \end{proof}

\section{Marked monoids}
\label{sec:marked-monoids}

\subsection{The space of marked monoids}
Let $M$ be a monoid.

An $M$-quotient is a pair $(\overline{M}, \pi)$, where $\overline{M}$ is a monoid and 
$\pi \colon M \to \overline{M}$ is a surjective monoid morphism. 
We define an equivalence relation on the class of all $M$-quotients by declaring that two $M$-quotients  $(\overline{M}_1,\pi_1)$ and $(\overline{M}_2,\pi_2)$ are 
\emph{equivalent} when there exists a monoid isomorphism $\varphi \colon \overline{M}_2 \to \overline{M}_1$ such that the following diagram is commutative:

$$
\xymatrix{ 
& \overline{M}_2 \ar[dd]^{\varphi \cong} \\
M \ar@{->>}[ru]^{\pi_2} \ar@{->>}[rd]_{\pi_1} & \\
& \overline{M}_1
} 
$$
that is, such that $\pi_1 = \varphi \circ \pi_2$.
\par
An equivalence class of $M$-quotients is called an $M$-\emph{marked monoid}.
Observe that two $M$-quotients $(\overline{M}_1,\pi_1)$ and $(\overline{M}_2,\pi_2)$ are equivalent if and only if 
$\gamma_{\pi_1} = \gamma_{\pi_2}$.
Thus, the set of $M$-marked monoids may be identified with the set $\CC(M)$ consisting of 
all congruence relations on $M$.
 Let us identify the set $\PP(M \times M)$ consisting of all subsets of $M \times M$ with the set $\{0,1\}^{M \times M}$ by means of the bijection from $\PP(M \times M)$ onto $\{0,1\}^{M \times M}$ given by $A \mapsto \chi_A$, where $\chi_A \colon M \times M \to \{0,1\}$ is the characteristic map of $A \subset M \times M$. 
We equip the set $\PP(M \times M) = \{0,1\}^{M \times M}$ with its prodiscrete uniform structure 
and its subset $\CC(M)$ 
 with the induced uniform structure.
Thus, a base of entourages of $\CC(M)$ is provided by the sets
$$
V_F = \{(\gamma_1,\gamma_2) \in \CC(M) \times \CC(M) : \gamma_1 \cap F = \gamma_2 \cap F \},
$$
where $F$ runs over all finite subsets of $M \times M$.
Intuitively, two congruence relations on $M$ are ``close" in $\CC(M)$ when their intersections with a large finite subset of $M \times M$ coincide.

\begin{proposition}
\label{p:properties-space-n-gamma}
Let $M$ be a monoid.
Then the space $\CC(M)$
 of $M$-marked monoids
 is a totally disconnected compact Hausdorff   space.   
\end{proposition}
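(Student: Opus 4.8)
The plan is to realize $\CC(M)$ as a \emph{closed} subset of the compact space $\PP(M \times M) = \{0,1\}^{M \times M}$ and then to appeal to the fact that Hausdorffness and total disconnectedness are inherited by subspaces. Since the alphabet $A := \{0,1\}$ is finite, Example~\ref{ex:prodiscrete-top} already tells us that $\PP(M \times M)$, equipped with its prodiscrete topology, is compact, Hausdorff, and totally disconnected. As any subspace of a Hausdorff (resp.\ totally disconnected) space is again Hausdorff (resp.\ totally disconnected), the space $\CC(M)$ is automatically Hausdorff and totally disconnected for the induced topology. Thus the only substantive point left to prove is that $\CC(M)$ is closed in $\PP(M \times M)$: once this is established, compactness of $\CC(M)$ follows at once, a closed subset of a compact space being compact.

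To prove closedness, I would exploit the fact that membership of a subset $R \subset M \times M$ in $\CC(M)$ is governed entirely by the four defining axioms of a congruence relation — reflexivity, symmetry, transitivity, and compatibility with the multiplication as in~\eqref{e:congruence} — and that each axiom is a conjunction, indexed by a family of tuples of elements of $M$, of conditions each of which constrains only \emph{finitely many} of the coordinates of $R \in \{0,1\}^{M \times M}$. The essential feature of the prodiscrete topology that I would use is that any subset of $\{0,1\}^{M \times M}$ singled out by fixing finitely many coordinates is clopen; hence each such condition defines a clopen set, and an arbitrary intersection of clopen sets is closed.

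Concretely, identifying $R$ with its characteristic function, reflexivity is the intersection over $m \in M$ of the sets $\{R : R(m,m) = 1\}$, symmetry the intersection over pairs $(m,m')$ of the sets $\{R : R(m,m') = R(m',m)\}$, transitivity the intersection over triples $(m,m',m'')$ of the sets on which $R(m,m') = R(m',m'') = 1$ forces $R(m,m'') = 1$, and compatibility the intersection over quadruples $(m_1,m_1',m_2,m_2')$ of the sets on which $R(m_1,m_1') = R(m_2,m_2') = 1$ forces $R(m_1 m_2, m_1' m_2') = 1$. Each constituent set depends on at most four coordinates and is therefore clopen, so each of the four axioms cuts out a closed subset of $\PP(M \times M)$; as $\CC(M)$ is exactly the intersection of these four closed sets, it is closed, and compactness follows.

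The main obstacle — indeed essentially the only one — is the routine but necessary bookkeeping in the previous paragraph: one must verify that each implication, once rewritten in the form $\lnot(\text{premise}) \lor (\text{conclusion})$, is a condition on finitely many coordinates, so that its solution set is the complement of a basic clopen cylinder and hence clopen. Everything else is automatic, so the proof reduces to this finite-coordinate observation together with the inheritance of Hausdorffness and total disconnectedness.
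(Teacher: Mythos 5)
Your proof is correct and follows essentially the same route as the paper: both reduce the statement to showing that $\CC(M)$ is closed in the compact, Hausdorff, totally disconnected space $\PP(M \times M) = \{0,1\}^{M \times M}$, and both establish closedness by observing that each of the four congruence axioms is an intersection, over tuples of elements of $M$, of conditions constraining only finitely many coordinates. The paper merely phrases these finite-coordinate conditions algebraically in terms of the projection maps $\pi_{m,m'}$ (e.g.\ $\pi_{m,m'}(E)(\pi_{m',m}(E)-1)=0$) rather than as complements of basic cylinders, which is the same argument in different notation.
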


\begin{proof}
The space $\PP(M \times M)$ is totally disconnected, compact, and Hausdorff since it is a product of finite discrete spaces.
Thus, it suffices to show that $\CC(M)$
   is closed in $\PP(M \times M)$.
To see this, observe that a subset $E \in \PP(M \times M)$ is a congruence relation on $M$ if and only if it satisfies
\begin{enumerate}[(1)]
\item
$(m,m) \in E$ for all $m \in M$;
\item
$(m',m) \in E$ for all $(m,m') \in E$;
\item
$(m,m'') \in E$ for all $(m,m')$ and $(m',m'') \in E$;
\item
$(m_1m_2,m'_1m'_2) \in E$ for all $(m_1,m'_1), (m_2,m'_2) \in E$.
\end{enumerate}
 Denoting, for each $(m,m') \in M \times M$, by $\pi_{m,m'} \colon \PP(M \times M) = \{0,1\}^{M \times M} \to \{0,1\}$
the projection map corresponding to the $(m,m')$-factor, these conditions are equivalent to
\begin{enumerate}[{\rm (1')}]
\item
$\pi_{m,m}(E) = 1$ for all $m \in M$;
\item
$\pi_{m,m'}(E)(\pi_{m',m}(E) - 1) = 0$ for all $m,m' \in M$;
\item
$\pi_{m,m'}(E)\pi_{m',m''}(E)(\pi_{m,m''}(E) - 1) = 0$ for all $m,m',m'' \in M$;
\item
$\pi_{m_1,m'_1}(E)\pi_{m_2,m'_2}(E)(\pi_{m_1m_2,m'_1m'_2}(E) - 1) = 0$ for all $m_1,m_2,m'_1,m'_2 \in M$.
\end{enumerate}  
As all projection maps are continuous on $\PP(M \times M)$, this shows that $\CC(M)$
 is an intersection of closed subsets of $\PP(M \times M)$
and hence  closed in $\PP(M \times M)$.
\end{proof}

\begin{remark}
If $M$ is countable then the uniform structure on $\CC(M)$ is metrizable.
Indeed, the uniform structure on $\PP(M \times M) = \{0,1\}^{M \times M}$ is metrizable when $M$ is countable. 
More precisely, when $M$ is finite then   $\PP(M)$ is a finite discrete space, while if $M$ is countably-infinite 
then $\PP(M \times M)$
  is homeomorphic to the Cantor set.
\end{remark}

\subsection{Marked monoids and residual properties} 
 
 One says that a property $\PP$ of monoids is \emph{closed under taking monoids} if every submonoid of a monoid satisfying $\PP$ 
 satisfies $\PP$.

\begin{proposition}
\label{p:charcterisation-residual-C}
Let $M$ be a monoid.
Suppose that $\PP$ is a property of monoids that is closed under taking finite direct products and submonoids
(e.g. the property of being finite or the property of being both cancellative and commutative). 
Then the following conditions are equivalent:
\begin{enumerate}[\rm (a)]
\item 
$M$ is residually $\PP$;
\item 
there exists a net $(\gamma_i)_{i \in I}$ in $\CC(M)$ converging to the diagonal $\Delta_M$  
such that the quotient monoids $M/\gamma_i$ satisfy $\PP$ for all $i \in I$.
\end{enumerate}
\end{proposition}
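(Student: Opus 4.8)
The plan is to prove the two implications separately, the direction (b) $\Rightarrow$ (a) being elementary and (a) $\Rightarrow$ (b) being where the hypotheses on $\PP$ are consumed. Throughout I will first unwind what convergence to $\Delta_M$ means in the uniform space $\CC(M)$: since a base of entourages is given by the sets $V_F$ with $F$ a finite subset of $M \times M$, a net $(\gamma_i)_{i \in I}$ converges to $\Delta_M$ if and only if for every finite $F \subset M \times M$ one has $\gamma_i \cap F = \Delta_M \cap F$ for all sufficiently large $i$. Since every congruence relation is reflexive, this amounts to saying that each non-diagonal pair $(m,m')$, with $m \neq m'$, eventually satisfies $(m,m') \notin \gamma_i$.

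For (b) $\Rightarrow$ (a): given distinct $m, m' \in M$, apply convergence to the singleton $F = \{(m,m')\}$. As $m \neq m'$ we have $\Delta_M \cap F = \emptyset$, so there is an index $i$ with $(m,m') \notin \gamma_i$. The canonical morphism $\pi_{\gamma_i} \colon M \to M/\gamma_i$ then satisfies $\pi_{\gamma_i}(m) = [m] \neq [m'] = \pi_{\gamma_i}(m')$, and $M/\gamma_i$ satisfies $\PP$ by hypothesis. Hence $M$ is residually $\PP$. I note that this direction uses neither closure under products nor closure under submonoids.

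For (a) $\Rightarrow$ (b): first, since $\PP$ is closed under finite direct products, Proposition~\ref{p:res-p} upgrades residual $\PP$ to fully residual $\PP$. I take as directed index set $I$ the collection of finite subsets of $M$, ordered by inclusion (this is directed, since the union of two finite sets contains both). For each finite $K \subset M$, full residual $\PP$-ness yields a monoid $N_K$ satisfying $\PP$ and a morphism $\varphi_K \colon M \to N_K$ whose restriction to $K$ is injective; let $\gamma_K := \gamma_{\varphi_K}$ be its kernel congruence relation. Then $M/\gamma_K$ is isomorphic to the image $\varphi_K(M)$, a submonoid of $N_K$, so $M/\gamma_K$ satisfies $\PP$ because $\PP$ is closed under taking submonoids. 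It remains to check $\gamma_K \to \Delta_M$: given a finite $F \subset M \times M$, let $K_0$ be the finite set of all coordinates of pairs of $F$; for every $K \supseteq K_0$ the injectivity of $\varphi_K$ on $K_0$ forces every non-diagonal pair of $F$ out of $\gamma_K$, whence $\gamma_K \cap F = \Delta_M \cap F$. As this holds for every index $K \geq K_0$, the net is eventually inside $V_F$, which is exactly convergence to $\Delta_M$.

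The only genuinely nontrivial input is Proposition~\ref{p:res-p}, whose proof in turn requires closure of $\PP$ under finite direct products; that is where the product hypothesis is consumed, while closure under submonoids is used solely to guarantee that the quotients $M/\gamma_K \cong \varphi_K(M)$ still satisfy $\PP$. The main point requiring care is essentially bookkeeping: matching the uniform notion of convergence in $\CC(M)$ with the combinatorial statement that the approximating morphisms separate points on larger and larger finite sets. I expect no serious obstacle beyond choosing the directed set correctly and confirming that $(\gamma_K)_{K \in I}$ is a genuine net.
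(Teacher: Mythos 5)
Your proof is correct, and the hypotheses on $\PP$ are consumed exactly where you say: closure under finite direct products only inside Proposition~\ref{p:res-p}, closure under submonoids only to pass from $N_K$ to $M/\gamma_K \cong \varphi_K(M)$; your remark that (b) $\Rightarrow$ (a) needs neither hypothesis also matches a remark the paper makes after its proof. The difference from the paper's argument is organizational rather than mathematical. The paper does not invoke Proposition~\ref{p:res-p}: it indexes the net by the finite subsets of $(M \times M) \setminus \Delta_M$, and for each finite set $i$ of non-diagonal pairs it forms the product morphism $\psi_i := \prod_{d \in i} \phi_d$ of the separating morphisms inline, taking $\gamma_i$ to be its kernel congruence; convergence then follows from the observation that $i \cap \gamma_i = \varnothing$, and closure under submonoids is used, exactly as in your argument, to see that $M/\gamma_i \cong \psi_i(M) \subset \prod_{d \in i} N_d$ satisfies $\PP$. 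Since the proof of Proposition~\ref{p:res-p} is precisely this finite-product construction, the two arguments are equivalent in substance; yours is slightly more modular (it reuses a result already stated in the paper and makes transparent that the product hypothesis enters only through it), while the paper's is self-contained and avoids the detour through the notion of fully residually $\PP$. Your index set (finite subsets of $M$) and the paper's (finite subsets of $(M \times M) \setminus \Delta_M$) play identical roles in the convergence bookkeeping, with directedness handling the passage from single pairs to finite families in both cases.
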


\begin{proof} 
Suppose first that $M$ is residually $\PP$. 
Then, for every pair $d = (m,m') \in M \times M \setminus \Delta_M$, we can find a monoid $N_d$ satisfying $\PP$ and a monoid  morphism $\phi_d \colon M \to N_d$ such that 
$\phi_d(m) \neq \phi_d(m')$. 
 Let $I$ denote the directed set consisting of all finite subsets of $M \times M \setminus \Delta_M$ partially ordered by inclusion. 
Consider, for each $i \in I$, the product morphism 
$$
\psi_i := \prod_{d \in i} \phi_d \colon M \to \prod_{d \in i} N_d
$$   
and let $\gamma_i := \gamma_{\phi_i}$ denote the associated congruence relation
 (cf.~\eqref{e:cong-rel}).
 Observe that every $(m,m') \in i$ satisfies $\psi_i(m) \not= \psi_i(m')$ so that 
 \begin{equation}
 \label{e:i-cap-gamm-i}
 i \cap \gamma_i = \varnothing.
 \end{equation} 
\par
Let $F \subset M \times M$ be a finite subset.
Then $i_0 := F \setminus \Delta_M \in I$.
Moreover, it follows from~\eqref{e:i-cap-gamm-i} that every  $i \in I$ such that $i_0 \subset i$ satisfies  $\gamma_i \cap F = \Delta_M \cap F$.
Consequently, the net $(\gamma_i)_{i \in I}$  converges to $\Delta_M$ in $\CC(M)$. 
\par
Finally, by our assumptions on $\PP$, we have that
$\prod_{d \in i} N_d$ and its submonoid $\psi_i(M) \cong M/\gamma_i$ satisfy $\PP$ for each $i \in I$. 
This shows that (a) implies (b).
\par
Conversely, suppose that (b) holds. Let $m,m' \in M$ be distinct elements and consider the finite set $F = \{(m,m')\} \subset (M \times M) \setminus \Delta_M$. 
Then there exists $i_F \in I$ such that $\gamma_{i_F} \cap F = \Delta_M \cap F$ and $M/\gamma_{i_F}$ satisfy $\PP$. 
As $\Delta_M \cap F = \varnothing$ we have that $(m,m') \notin \gamma_{i_F}$. 
In other words, if $\phi_F \colon M \to M/\gamma_{i_F}$ denotes the canonical epimorphism, 
we have $\phi_F(m) \neq \phi_F(m')$.
We deduce that $M$ is residually $\PP$. This shows (b) $\Rightarrow$ (a).
\end{proof}

Note that in the above proposition, the assumptions on $\PP$ are not needed for the implication (b) $\Rightarrow$ (a).

 \subsection{Marked monoids and invariant subsets}
 Let $M$ be a monoid and let $A$ be a set.
If $\gamma \in \CC(M)$, we have seen in Example~\ref{ex:prodiscrete-top}
that $\Inv(\gamma)$ is the invariant subset of $A^M$ consisting of all configurations $x \in A^M$ that are constant on each equivalence class of $\gamma$. 
  
\begin{theorem}
\label{t:psi-uc-n-p}
Let $M$ be a monoid and let $A$ be a set.
 Equip $\CC(M)$ with the uniform structure induced by the prodiscrete uniform structure on $\PP(M \times M) = \{0,1\}^{M \times M}$ and  
 $\PP(A^{M})$ with the Hausdorff-Bourbaki uniform structure associated with the prodiscrete uniform structure on $A^{M}$. 
Then the map $\Psi \colon \CC(M) \to \PP(A^{M})$ defined by $\Psi(\gamma) := \Inv(\gamma)$ is uniformly continuous.
Moreover, if $A$ contains more than one element then $\Psi$ is a uniform embedding.
\end{theorem}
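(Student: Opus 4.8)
\textit{Proof proposal.} The plan is to treat the two assertions separately, relying throughout on the description of $\Inv(\gamma)$ from Example~\ref{x:inv-gamma-shift}: for a congruence $\gamma$, the set $\Inv(\gamma)$ consists exactly of the configurations $x \in A^M$ that are constant on each $\gamma$-class.

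For uniform continuity I would first unwind the target uniform structure. Writing $W_E := \{(x,y) \in A^M \times A^M : x\vert_E = y\vert_E\}$ for the basic prodiscrete entourages ($E \subset M$ finite), one reads off directly from the definition of $\widehat{R}$ that $(Y,Z) \in \widehat{W_E}$ if and only if the two restriction-sets $\{y\vert_E : y \in Y\}$ and $\{z\vert_E : z \in Z\}$ coincide in $A^E$. The key is then the local formula
\[
\{x\vert_E : x \in \Inv(\gamma)\} = \{u \in A^E : u(m) = u(m') \text{ for all } (m,m') \in \gamma \cap (E \times E)\}.
\]
The inclusion $\subseteq$ is immediate since elements of $\Inv(\gamma)$ are constant on $\gamma$-classes; for $\supseteq$ I would extend a compatible $u \in A^E$ to a $\gamma$-invariant configuration by giving each $\gamma$-class that meets $E$ the (well-defined, by compatibility of $u$) common value of $u$ on that class, and a fixed value $a_0 \in A$ to the remaining classes. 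This formula shows that $\Inv(\gamma)\vert_E$ depends only on $\gamma \cap (E \times E)$. Hence, taking the finite set $F := E \times E \subset M \times M$, the condition $\gamma_1 \cap F = \gamma_2 \cap F$ forces $\Inv(\gamma_1)\vert_E = \Inv(\gamma_2)\vert_E$, i.e.\ $(\Psi(\gamma_1),\Psi(\gamma_2)) \in \widehat{W_E}$, which is precisely uniform continuity.

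For the embedding I would avoid estimating $\Psi^{-1}$ by hand and argue by compactness. First, injectivity when $|A| \geq 2$: if $\gamma_1 \neq \gamma_2$, say $(m,m') \in \gamma_1 \setminus \gamma_2$, then $m$ and $m'$ lie in distinct $\gamma_2$-classes, so the configuration equal to $a$ on the $\gamma_2$-class of $m$ and to $b \neq a$ elsewhere is constant on $\gamma_2$-classes, hence lies in $\Inv(\gamma_2)$, yet separates the $\gamma_1$-related pair $(m,m')$ and so does not lie in $\Inv(\gamma_1)$; thus $\Psi(\gamma_1) \neq \Psi(\gamma_2)$. Now $\CC(M)$ is compact by Proposition~\ref{p:properties-space-n-gamma}, each $\Inv(\gamma)$ is a closed subset of $A^M$ by Proposition~\ref{p:inv-gamma-closed}, and the Hausdorff-Bourbaki topology is Hausdorff on the set of closed subsets of $A^M$ by Proposition~\ref{p:grom-haus-sep-closed}. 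So $\Psi$ is a continuous injection from a compact space into a Hausdorff uniform space, and Proposition~\ref{p:inj-hom-unif-emb} immediately gives that $\Psi$ is a uniform embedding (the uniform structure induced on the image from the hyperspace of closed sets being the one induced from $\PP(A^M)$).

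The only genuinely computational point is establishing the local formula for $\Inv(\gamma)\vert_E$ and matching the base entourage $\widehat{W_E}$ with equality of restriction-sets; I expect this bookkeeping to be the main obstacle, since once it is in hand the choice $F = E \times E$ settles uniform continuity at once, and the embedding part becomes soft, resting only on compactness of $\CC(M)$ and Hausdorffness of the hyperspace of closed sets.
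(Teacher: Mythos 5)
Your proposal is correct and follows essentially the same route as the paper: uniform continuity comes from showing that the restriction of $\Inv(\gamma)$ to a finite set $E \subset M$ depends only on $\gamma \cap (E \times E)$ (using the same extension-of-compatible-partial-configurations step the paper uses), and the embedding part is the paper's argument verbatim---injectivity via a two-valued separating configuration, then compactness of $\CC(M)$, closedness of each $\Inv(\gamma)$, Hausdorffness of the hyperspace of closed subsets, and Proposition~\ref{p:inj-hom-unif-emb}. The only cosmetic difference is your reformulation of the basic Hausdorff-Bourbaki entourages $\widehat{W_E}$ as equality of restriction sets in $A^E$, where the paper instead works directly with the inclusions $\Inv(\gamma_1) \subset T[\Inv(\gamma_2)]$ and $\Inv(\gamma_2) \subset T[\Inv(\gamma_1)]$.
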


\begin{proof}
Let  $W$ be an entourage of $\PP(A^{M})$.
Let us show that there exists an entourage $V$ of $\CC(M)$ such that
\begin{equation}
\label{e:unif-cont-psi-mg}
(\gamma_1,\gamma_2) \in V \Rightarrow (\Inv(\gamma_1),\Inv(\gamma_2))   \in W.
\end{equation}
This will prove that $\Psi$ is uniformly continuous.
\par
By definition of the Hausdorff-Bourbaki uniform structure on $\PP(A^M)$, 
there is an entourage $T$ of $A^M$ such that
\begin{equation}
\label{e:ent-in-gr-haus-1}
\widehat{T} := \{(X,Y) \in \PP(A^M) \times \PP(A^M) : Y \subset T[X] \text{ and } X \subset T[Y] \} \subset W.
\end{equation}

Since $A^M$ is endowed with its prodiscrete uniform structure,
there is a finite subset $F \subset M$ such that
\begin{equation}
\label{e:u-f-t-entour}
U := \{(x,y) \in A^M \times A^M : x\vert_F = y\vert_F\} \subset T.
\end{equation}
We claim that the entourage $V$ of $\CC(M)$ given by
$$
V := \{(\gamma_1,\gamma_2) \in \CC(M) \times \CC(M) : \gamma_1 \cap (F \times F) = \gamma_2 \cap (F \times F)\}
$$
satisfies~\eqref{e:unif-cont-psi-mg}. 
\par
To prove our claim, 
suppose that $(\gamma_1,\gamma_2)  \in V$. Let $x \in \Inv(\gamma_1)$. 
Let also $m,m' \in F$ and suppose that
\begin{equation*}
\label{e:mm'-rho}
(m,m') \in \gamma_2.
\end{equation*} 
The fact that $\gamma_1 \cap (F \times F) = \gamma_2 \cap  (F \times F)$ implies that 
\begin{equation}
\label{e:mm'-rho-0}
(m,m') \in \gamma_1.
\end{equation} 
Since $x \in \Inv(\gamma_1)$, 
we deduce from~\eqref{e:mm'-rho-0}  that $x(m) = x(m')$.
Thus, the configuration $x$ is constant on the intersection with $F$ of each $\gamma_2$-class.
It follows that there exists $y \in \Inv(\gamma_2)$ such that $x$ and $y$ coincide on $F$, i.e.,
$(x,y) \in U$.
Since $U \subset T$ by~\eqref{e:u-f-t-entour},
this shows that $\Inv(\gamma_1) \subset T[\Inv(\gamma_2)]$.
Similarly, we get $\Inv(\gamma_1) \subset T[\Inv(\gamma_2)]$,
so that $(\Inv(\gamma_1), \Inv(\gamma_2)) \in \widehat{T}$ by~\eqref{e:ent-in-gr-haus-1}.
As $\widehat{T} \subset W$ by~\eqref{e:ent-in-gr-haus-1},
we conclude that $V$ satisfies~\eqref{e:unif-cont-psi-mg}.
 Therefore $\Psi$ is uniformly continuous.  
\par
As $\CC(M)$ is compact by Proposition \ref{p:properties-space-n-gamma}, we deduce that $\Psi$ is uniformly continuous by applying 
Proposition~\ref{p:cont-comp-implies-unifcont}. 
\par
Suppose now that $A$ has more than one element and let us show that $\Psi$ is injective.
Let $\gamma_1, \gamma_2 \in \CC(M)$ and suppose that $\Psi(\gamma_1) = \Psi(\gamma_2)$, that is, $\Inv(\gamma_1) = \Inv(\gamma_2)$.
Let $m,m' \in M$ such that  $(m,m') \notin \gamma_1$.
Fix $a,b \in A$ with $a \not= b$ and consider the configuration $x \in A^M$ defined, for all $u \in M$,  by 
$x(u) = a$ if $u$ belongs to the $\gamma_1$-class of $m$ and $x(u) = b$ otherwise. 
 It is clear that  $x \in \Inv(\gamma_1)$.
 Therefore $x \in \Inv(\gamma_2)$.
 This implies $(m,m') \notin \gamma_2$ since $x(m) \not= x(m')$.
We deduce that  $\gamma_2 \subset \gamma_1$. 
By symmetry, we also have $\gamma_1 \subset \gamma_2$. Therefore $\gamma_1 = \gamma_2$. 
This shows that $\Psi$ is injective.
On the other hand, $\Psi(\gamma) = \Inv(\gamma)$ is closed in $A^M$ for all $\gamma \in \CC(M)$ by Proposition~\ref{p:inv-gamma-closed}.
As $\CC(M)$ is compact and the set of closed subsets of $A^M$ is Hausdorff for the Hausdorff-Bourbaki topology by Proposition~\ref{p:grom-haus-sep-closed}, we conclude that $\Psi$ is a uniform embedding by applying Proposition \ref{p:inj-hom-unif-emb}.     
\end{proof}

\subsection{Cellular automata over quotient monoids}
Let $M$ be a monoid and let  
 $\gamma \in \CC(M)$. 
 Denote by
$\pi_\gamma \colon M \to M/\gamma$  the quotient monoid morphism, i.e., the map sending each $m \in M$ to its $\gamma$-class $[m]$.
Let now $A$ be a set and consider the configuration space $A^M$ (equipped with its prodiscrete topology and with the shift action of $M$). 
We have seen in  Example~\ref{ex:prodiscrete-top}  
that $\Inv(\gamma)$ is the subset of $A^M$ consisting of all configurations $x \in A^M$ that are constant on each  $\gamma$-class.
In other words, the map  
$\pi_\gamma^* \colon A^{M/\gamma} \to \Inv(\gamma)$ defined by
\begin{equation}
\label{e:def-pi-gamma-star}
\pi_\gamma^*(y) := y \circ \pi_\gamma
\end{equation}
 for all $y \in A^{M/\gamma}$,  is bijective.
 We equip the set $A^{M/\gamma}$   with its prodiscrete uniform structure and with the $M/\gamma$-shift.
 On the other hand,  $\Inv(\gamma) \subset A^M$ is equipped with the uniform structure induced by the 
 prodiscrete uniform structure on $A^{M}$.
 Also, since $mx = m'x$ for all $x \in \Inv(\gamma)$ and $(m,m') \in \gamma$, the shift action of $M$ on $A^M$ naturally induces an action of $M/\gamma$ on $\Inv(\gamma)$
 given by
 \begin{equation}
 \label{e:action-ind-M-gamma}
 [m] x := mx
 \end{equation}
 for all $m \in M$ and $x \in \Inv(\gamma)$.

 \begin{proposition}
 \label{p:inv-sim-A-quotient-conf}
The  map $\pi_\gamma^* \colon A^{M/\gamma} \to \Inv(\gamma)$
 defined by~\eqref{e:def-pi-gamma-star} 
 is an $M/\gamma$-equivariant uniform isomorphism.
 \end{proposition}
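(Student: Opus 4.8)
The plan is to use the fact, already established in the discussion preceding the statement, that $\pi_\gamma^*$ is a bijection; it then remains to check that $\pi_\gamma^*$ intertwines the two actions and that both $\pi_\gamma^*$ and its inverse are uniformly continuous. Since the alphabet $A$ may be infinite, $A^M$ need not be compact, so I cannot appeal to Proposition~\ref{p:inj-hom-unif-emb}; instead I would verify uniform continuity directly, working with the explicit bases of entourages of the prodiscrete uniform structures on $A^{M/\gamma}$ and on $A^M$ (the latter inducing the uniform structure on $\Inv(\gamma)$).

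First I would establish $M/\gamma$-equivariance. Fix $m \in M$ and $y \in A^{M/\gamma}$. The $M/\gamma$-shift is given by $([m]y)([m']) = y([m'][m]) = y([m'm])$, while the induced action on $\Inv(\gamma)$ is $[m]x = mx$ from~\eqref{e:action-ind-M-gamma}, with $(mx)(u) = x(um)$. Evaluating at an arbitrary $u \in M$, one side gives $\pi_\gamma^*([m]y)(u) = ([m]y)([u]) = y([u][m]) = y([um])$, and the other gives $([m]\pi_\gamma^*(y))(u) = \pi_\gamma^*(y)(um) = y([um])$. Since these agree for every $u$, we obtain $\pi_\gamma^*([m]y) = [m]\pi_\gamma^*(y)$, which is the desired equivariance.

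Next I would treat uniform continuity in both directions. A basic entourage of $\Inv(\gamma)$ is cut out by a finite set $F \subset M$ through agreement on $F$; taking $F' := \pi_\gamma(F) \subset M/\gamma$, which is finite, agreement of $y, y'$ on $F'$ forces $\pi_\gamma^*(y)(m) = y([m]) = y'([m]) = \pi_\gamma^*(y')(m)$ for every $m \in F$, so $\pi_\gamma^*$ is uniformly continuous. For the inverse I would use that every $x \in \Inv(\gamma)$ is constant on $\gamma$-classes (Example~\ref{x:inv-gamma-shift}), so that $(\pi_\gamma^*)^{-1}(x)([m]) = x(m)$ is well defined; then, given a finite $F' \subset M/\gamma$, choosing one representative of each class in $F'$ yields a finite set $F \subset M$, and agreement of $x, x'$ on $F$ forces agreement of their preimages on $F'$.

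The verification is essentially bookkeeping, and I expect no genuine obstacle. The only points demanding care are the well-definedness of $(\pi_\gamma^*)^{-1}$, which rests on configurations in $\Inv(\gamma)$ being constant on $\gamma$-classes, and keeping the two shift actions straight so that in the equivariance computation the factors multiply in the correct order.
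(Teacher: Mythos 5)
Your proposal is correct and follows essentially the same route as the paper's proof: the paper likewise verifies equivariance by direct computation (using $R_{[m]} \circ \pi_\gamma = \pi_\gamma \circ R_m$ rather than your pointwise evaluation, which is trivially equivalent), and establishes uniform continuity of $\pi_\gamma^*$ via $F' = \pi_\gamma(F)$ and of $(\pi_\gamma^*)^{-1}$ via a finite set of representatives, exactly as you do. Your remark that compactness-based tools are unavailable for infinite $A$ matches the paper's choice to argue directly from the entourage bases.
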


\begin{proof}
Let $Y \in A^{M/\gamma}$ an $m \in M$.
We have that
$$
[m] y = y \circ R_{[m]}
$$
so that
$$
\pi_\gamma^*([m] y) = y \circ R_{[m]} \circ \pi_\gamma.
$$
Using the fact that  $R_{[m]} \circ \pi_\gamma = \pi_\gamma \circ R_m$ since $\pi_\gamma$ is a monoid morphism, we deduce that 
\begin{align*}
\pi_\gamma^*([m] y) &= y \circ \pi_\gamma \circ R_m \\
&= m(y \circ \pi_\gamma) \\
&= m \pi_\gamma^*(y) \\
&= [m] \pi_\gamma^*(y) && \text{(by \eqref{e:action-ind-M-gamma})}.
\end{align*}
This shows that $\pi_\gamma^*$ is $M/\gamma$-equivariant.
\par
Let $F \subset M$ and $F' := \pi_\gamma(F)$.
If $y_1,y_2 \in A^{M/\gamma}$ coincide on   $F'$, then 
$\pi_\gamma^*(y_1)$ and $\pi_\gamma^*(y_2)$ coincide on $F$.
As $F'$ is finite whenever $F$ is finite, this implies that $\pi_\gamma^*$ is uniformly continuous.
\par
Suppose now that $E$ is a finite subset of $M/\gamma$, that is, a finite set of $\gamma$-classes.
As $\pi_\gamma$ is surjective, we can find a finite subset $\widetilde{E} \subset M$ such that $E = \pi_\gamma(\widetilde{E})$.
If two configurations $x_1,x_2 \in \Inv(\gamma)$ coincide on $\widetilde{E}$, then
$(\pi_\gamma^*)^{-1}(x_1)$ and $(\pi_\gamma^*)^{-1}(x_2)$ coincide on $E$.
This shows that $(\pi_\gamma^*)^{-1}$ is uniformly continuous. 
  \end{proof}

Assume now that  $\tau \colon A^M \to A^M$ is a cellular automaton over $M$.
Since, by definition,  $\tau$ is $M$-equivariant, 
we have that $\tau(\Inv(\gamma)) \subset \Inv(\gamma)$ by
Proposition~\ref{p:E-periodic}.(iii).
 Thus, we can define a map $\tau' \colon A^{M/\gamma} \to A^{M/\gamma}$ by setting
\begin{equation}
\label{e;tau'}
\tau' = (\pi_\gamma^*)^{-1} \circ \tau\vert_{\Inv(\gamma)} \circ \pi_\gamma^*,
\end{equation}
where $\tau\vert_{\Inv(\gamma)} \colon \Inv(\gamma) \to \Inv(\gamma)$ denotes the restriction of $\tau$ to $\Inv(\gamma)$.
In other words, the map $\tau'$ is obtained by conjugating by $\pi_\gamma^*$ the restriction of $\tau$ to $\Inv(\gamma)$, so that the diagram
$$
\begin{CD}
A^{M/\gamma}  @>\pi_\gamma^*>> \Inv(\gamma) \subset A^M \\
@V{\tau'}VV  @VV{\tau\vert_{\Inv(\gamma)}}V \\
A^{M/\gamma} @>>\pi_\gamma^*> \Inv(\gamma)
\end{CD}
$$
is commutative.
\par
Suppose that $S \subset M$ is a memory set for $\tau$ with $\mu \colon A^S \to A$ the associated local defining map.
Consider the finite subset $S' := \pi_{\gamma}(S) \subset M/\gamma$
and the map $\mu' \colon A^{S'} \to A$ defined by $\mu'= \mu \circ \pi'_{\gamma}$, where $\pi'_{\gamma} \colon A^{S'} \to A^S$ is the injective map induced by 
the restriction of $\pi_{\gamma}$ to $S$.

 \begin{proposition}
\label{p:quotient-automaton}
The map $\tau' \colon A^{M/\gamma} \to A^{M/\gamma}$
defined by~\eqref{e;tau'} 
is a cellular automaton over the monoid $M/\gamma$
admitting $S'$ as a memory set and $\mu' \colon A^{S'} \to A$ as the associated local defining map.
 \end{proposition}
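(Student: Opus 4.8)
The plan is to verify directly that $\tau'$ satisfies the local formula~\eqref{e:cell-aut} of Theorem~\ref{t:ca-iff-equiv-unif-cont} with respect to the finite set $S' = \pi_\gamma(S)$ and the map $\mu'$; once this is done, Theorem~\ref{t:ca-iff-equiv-unif-cont} yields at once that $\tau'$ is a cellular automaton over $M/\gamma$ admitting $S'$ as a memory set and $\mu'$ as its associated local defining map. Observe that $S'$ is finite, being the image of the finite set $S$ under $\pi_\gamma$, and that $\pi'_\gamma$ is injective since $\pi_\gamma\vert_S$ maps $S$ onto $S'$.

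First I would unwind the conjugation~\eqref{e;tau'} pointwise. Given $y \in A^{M/\gamma}$, set $x := \pi_\gamma^*(y) = y \circ \pi_\gamma \in \Inv(\gamma)$. Since $\pi_\gamma^*(z)(m) = z([m])$ for all $z \in A^{M/\gamma}$ and $m \in M$, the inverse reads off the value of a configuration in $\Inv(\gamma)$ on an arbitrary representative, namely $(\pi_\gamma^*)^{-1}(w)([m]) = w(m)$ for $w \in \Inv(\gamma)$ (which is well defined precisely because $w$ is constant on each $\gamma$-class). Hence, for every $m \in M$,
\[
\tau'(y)([m]) = (\pi_\gamma^*)^{-1}\big(\tau(x)\big)([m]) = \tau(x)(m) = \tau(y \circ \pi_\gamma)(m),
\]
where we use that $\tau(x) \in \Inv(\gamma)$ by Proposition~\ref{p:inv-sim-A-quotient-conf} and the $M$-equivariance of $\tau$.

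Next I would invoke the local formula for $\tau$. As $S$ is a memory set for $\tau$ with local defining map $\mu$, one has $\tau(x)(m) = \mu\big((mx)\vert_S\big)$. The crux of the argument is the identity
\[
(mx)\vert_S = \pi'_\gamma\big(([m]y)\vert_{S'}\big) \quad \text{in } A^S.
\]
To establish it, evaluate both sides at $s \in S$: on the left, $(mx)(s) = x(sm) = y(\pi_\gamma(sm)) = y([s][m])$, using that $\pi_\gamma$ is a monoid morphism; on the right, $\pi'_\gamma(z)(s) = z([s])$, so with $z = ([m]y)\vert_{S'}$ and $[s] \in S'$ we get $([m]y)([s]) = y([s][m])$ by~\eqref{e:action-ind-M-gamma} and the definition of the $M/\gamma$-shift. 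Both sides therefore equal $y([s][m])$. Combining the displays gives $\tau'(y)([m]) = \mu\big((mx)\vert_S\big) = \mu\big(\pi'_\gamma(([m]y)\vert_{S'})\big) = \mu'\big(([m]y)\vert_{S'}\big)$, which is exactly~\eqref{e:cell-aut} for $\tau'$.

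The computation is routine once this bookkeeping is in place; the one point deserving care, and the heart of the matter, is the displayed identity $(mx)\vert_S = \pi'_\gamma(([m]y)\vert_{S'})$, since it is precisely here that the restriction to $S$ upstairs must be matched with the restriction to $S'$ downstairs through $\pi'_\gamma$, the multiplicativity $\pi_\gamma(sm) = [s][m]$ being the decisive ingredient. All remaining facts (finiteness of $S'$, well-definedness of $(\pi_\gamma^*)^{-1}$ on $\Inv(\gamma)$, and the equivariance and uniform-isomorphism properties of $\pi_\gamma^*$) are supplied by Proposition~\ref{p:inv-sim-A-quotient-conf} and the surrounding setup.
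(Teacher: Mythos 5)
Your proposal is correct and follows essentially the same route as the paper: unwind the conjugation to get $\tau'(y)([m]) = \tau(y \circ \pi_\gamma)(m)$, apply the local defining formula for $\tau$, match the restriction to $S$ upstairs with the restriction to $S'$ downstairs via $\pi'_\gamma$ (your pointwise check that both sides equal $y([s][m])$ is just a more explicit version of the paper's appeal to~\eqref{e:action-ind-M-gamma} and the definition of $\mu'$), and conclude by Theorem~\ref{t:ca-iff-equiv-unif-cont}. The only slip is cosmetic: the fact that $\tau(\Inv(\gamma)) \subset \Inv(\gamma)$ comes from Proposition~\ref{p:E-periodic}.(iii), not Proposition~\ref{p:inv-sim-A-quotient-conf}, but you correctly identify $M$-equivariance as the underlying reason.
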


\begin{proof}
Let $y \in A^{M/\gamma}$ and $m \in M$.   
We then have
\begin{align*}
\tau'(y)([m]) 
&= \tau(y \circ \pi_\gamma)(m) \\
& = \mu\left((m(y \circ \pi_\gamma))\vert_S\right)\\
& = \mu\left(([m](y \circ \pi_\gamma))\vert_S\right) &&\text{(by \eqref{e:action-ind-M-gamma})}\\
& = \mu'\left(([m]y)\vert_{S'}\right).
\end{align*}
This shows that $\tau'$ is a cellular automaton over $M/\gamma$ admitting $S'$ as a memory set and $\mu'$ as the associated local defining map.
\end{proof}

\begin{remark}
One can give a direct proof of the fact that $\tau'$ is a cellular automaton in the following way.
Since $\tau$ is a cellular automaton over $M$, it is uniformly continuous and $M$-equivariant.
Consequently, $\tau\vert_{\Inv(\gamma)}$ is  uniformly continuous and $M/\gamma$-equivariant.
As $\pi_\gamma^*$ is an $M/\gamma$-equivariant uniform isomorphism by Proposition~\ref{p:inv-sim-A-quotient-conf},
we deduce from~\eqref{e;tau'}   that
$\tau'$ is a composite of uniformly continuous $M/\gamma$-equivariant maps.
 Therefore $\tau'$ is a cellular automaton over $M/\gamma$.
 \end{remark}

Consider now the map $\Phi_\gamma \colon \CA(M;A) \to \CA(M/\gamma;A)$ given by $\Phi(\tau) = 
\tau'$, where $\tau'$ is defined by \eqref{e;tau'}. 
We have the following:

\begin{proposition}
\label{p:phi-gamma-epi}
The map $\Phi_\gamma \colon \CA(M;A) \to \CA(M/\gamma;A)$ is a monoid epimorphism.
\end{proposition}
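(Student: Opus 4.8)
The plan is to verify separately the two properties making up ``monoid epimorphism'': that $\Phi_\gamma$ is a monoid morphism and that it is surjective. Well-definedness, i.e.\ that $\Phi_\gamma(\tau) = \tau'$ actually lands in $\CA(M/\gamma;A)$, is already supplied by Proposition~\ref{p:quotient-automaton}, so I would not dwell on it.

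For the morphism property, the key observation is that, by \eqref{e;tau'}, $\Phi_\gamma$ is obtained by conjugating a restriction by the single fixed uniform isomorphism $\pi_\gamma^*$ of Proposition~\ref{p:inv-sim-A-quotient-conf}. First I would note that $\Id_{A^M}$ restricts to $\Id_{\Inv(\gamma)}$, whence $\Phi_\gamma(\Id_{A^M}) = (\pi_\gamma^*)^{-1} \circ \Id_{\Inv(\gamma)} \circ \pi_\gamma^* = \Id_{A^{M/\gamma}}$. For composition, given $\tau_1, \tau_2 \in \CA(M;A)$, both preserve $\Inv(\gamma)$ by Proposition~\ref{p:E-periodic}.(iii), so that $(\tau_1 \circ \tau_2)\vert_{\Inv(\gamma)} = \tau_1\vert_{\Inv(\gamma)} \circ \tau_2\vert_{\Inv(\gamma)}$. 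Then $\tau_1' \circ \tau_2'$ contains a central factor $\pi_\gamma^* \circ (\pi_\gamma^*)^{-1} = \Id_{\Inv(\gamma)}$ that cancels, and \eqref{e;tau'} yields $\tau_1' \circ \tau_2' = (\pi_\gamma^*)^{-1} \circ (\tau_1 \circ \tau_2)\vert_{\Inv(\gamma)} \circ \pi_\gamma^* = (\tau_1 \circ \tau_2)'$. This is just the standard fact that conjugation by a fixed isomorphism is multiplicative.

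The surjectivity is the substantive point. Given $\sigma \in \CA(M/\gamma;A)$, I would fix a memory set $S' \subset M/\gamma$ for $\sigma$ with associated local defining map $\nu \colon A^{S'} \to A$. Since $\pi_\gamma$ is surjective, I can choose a finite subset $S \subset M$ on which $\pi_\gamma$ restricts to a \emph{bijection} $S \to S'$, by selecting one representative for each element of $S'$. Then the induced precomposition map $\pi'_\gamma \colon A^{S'} \to A^S$, $w \mapsto w \circ (\pi_\gamma\vert_S)$, is itself a bijection. Setting $\mu := \nu \circ (\pi'_\gamma)^{-1}$ and letting $\tau \in \CA(M;A)$ be the cellular automaton with memory set $S$ and local defining map $\mu$ (which exists by Theorem~\ref{t:ca-iff-equiv-unif-cont}), Proposition~\ref{p:quotient-automaton} tells me that $\tau' = \Phi_\gamma(\tau)$ admits the memory set $\pi_\gamma(S) = S'$ and the local defining map $\mu \circ \pi'_\gamma = \nu$. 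Hence $\tau'$ and $\sigma$ share a common memory set and a common local defining map, and therefore $\tau' = \sigma$.

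The only place requiring care, and the main obstacle, is this surjectivity step: it is cleanest to arrange that $S$ maps \emph{bijectively} (not merely surjectively) onto $S'$, so that $\pi'_\gamma$ is invertible and $\mu = \nu \circ (\pi'_\gamma)^{-1}$ makes literal sense. Once this choice is made, the pull-back of the local defining map is immediate, and the rest is routine bookkeeping with \eqref{e;tau'} and the formulas of Proposition~\ref{p:quotient-automaton}.
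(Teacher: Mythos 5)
Your proposal is correct and follows essentially the same route as the paper: for surjectivity, both choose a finite transversal $S \subset M$ mapping bijectively onto $S'$ under $\pi_\gamma$ and pull back the local defining map (your $\mu = \nu \circ (\pi'_\gamma)^{-1}$ is literally the paper's $\mu(y) = \nu(y \circ \phi^{-1})$), then invoke Proposition~\ref{p:quotient-automaton} to conclude $\tau' = \sigma$. The morphism property, which you spell out via cancellation of $\pi_\gamma^* \circ (\pi_\gamma^*)^{-1}$, is exactly what the paper summarizes as ``immediately follows from \eqref{e;tau'}.''
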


\begin{proof}
Let $\sigma \colon A^{M/\gamma}\to A^{M/\gamma}$ be a cellular automaton over $M/\gamma$ 
with memory set $S' \subset M/\gamma$ and associated local defining map $\nu \colon A^{S'} \to A$. 
Let $S \subset M$ be a finite set such that $\pi_{\gamma}$ induces by restriction a bijection $\phi \colon S \to S'$. Consider the map $\mu \colon A^{S} \to A$ defined by $\mu(y) = \nu(y \circ \phi^{-1})$ for all $y \in A^{S}$. Let $\tau \colon A^M \to A^M$ be the cellular automaton
over $M$ with memory set $S$ and local defining map $\mu$. 
We have
\[
\mu'(z) = (\mu \circ \pi'_\gamma)(z) = \nu(\pi'_\gamma(z) \circ \phi^{-1}) = \nu(z)
\]
for all $z \in A^{S'}$. 
It follows that $\mu' = \nu$, and $\tau' = \sigma$. This shows that $\Phi_\gamma$ is surjective.
\par
The fact that $\Phi_\gamma$ is a monoid morphism immediately follows from \eqref{e;tau'}.
\end{proof}

\subsection{Gromov's injectivity lemma for monoid actions}

The following result will play a central role in the proof of Theorem~\ref{t:surjunct-closed}.   
 It extends Gromov's injectivity lemma \cite[Lemma~4.H'']{gromov-esav}
 (see also \cite[Theorem~4.1]{expansive} and \cite[Theorem~3.6.1]{livre}).

\begin{theorem} 
\label{t;gil} 
Let $X$ be a uniform space endowed with a uniformly continuous and expansive action of a 
monoid $M$.
Let $f \colon X \to X$ be a uniformly continuous and $M$-equivariant map.
Suppose that $Y$ is a subset of $X$ such that
the restriction of $f$ to $Y$ is a uniform embedding from $Y$ into $X$.
Then there exists an entourage $V$ of $X$ satisfying the following 
property: if $Z$ is an $M$-invariant subset of $X$ such that $Z \subset V[Y]$, then
the restriction of $f$ to $Z$ is injective.
\end{theorem}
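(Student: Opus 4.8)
The plan is to exploit expansivity to reduce the injectivity of $f|_Z$ to a uniform closeness statement that is controlled by a single entourage $V$ built from $f$ and from the uniform embedding $f|_Y$. First I would fix an expansivity entourage $W_0$ for the action, so that $\bigcap_{m \in M} m^{-1}(W_0) = \Delta_X$. The goal then becomes: produce $V$ such that whenever $Z$ is $M$-invariant with $Z \subset V[Y]$ and $z_1, z_2 \in Z$ satisfy $f(z_1) = f(z_2)$, one has $(m z_1, m z_2) \in W_0$ for every $m \in M$; expansivity immediately forces $z_1 = z_2$, which is exactly the injectivity of $f|_Z$.

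Next I would assemble a descending chain of entourages. Choose a symmetric entourage $W_1$ with $W_1 \circ W_1 \circ W_1 \subset W_0$, which exists by iterating axiom (UN-5) and symmetrizing. Since $f|_Y$ is a uniform embedding, its inverse $(f|_Y)^{-1} \colon f(Y) \to Y$ is uniformly continuous; translating this through the uniform structures induced on $Y$ and $f(Y)$, there is an entourage $T$ of $X$ such that, for all $y_1, y_2 \in Y$, $(f(y_1), f(y_2)) \in T$ implies $(y_1, y_2) \in W_1$. Pick a symmetric $T'$ with $T' \circ T' \subset T$. Finally, using the uniform continuity of $f$, choose a symmetric entourage $V$ with $V \subset W_1$ and such that $(x_1, x_2) \in V$ implies $(f(x_1), f(x_2)) \in T'$.

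With these choices the core step runs as follows. Take $Z$ that is $M$-invariant with $Z \subset V[Y]$, take $z_1, z_2 \in Z$ with $f(z_1) = f(z_2)$, and fix $m \in M$. Invariance gives $m z_1, m z_2 \in Z \subset V[Y]$, so there are $y_1, y_2 \in Y$ with $(m z_i, y_i) \in V$ for $i = 1,2$. By the choice of $V$ we get $(f(m z_i), f(y_i)) \in T'$, while $M$-equivariance of $f$ together with $f(z_1) = f(z_2)$ yields $f(m z_1) = m f(z_1) = m f(z_2) = f(m z_2)$; hence $(f(y_1), f(y_2)) \in T' \circ T' \subset T$ using symmetry of $T'$. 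The inverse-continuity property then gives $(y_1, y_2) \in W_1$, and chaining $(m z_1, y_1)$, $(y_1, y_2)$, $(y_2, m z_2)$ (using $V \subset W_1$ and symmetry of $V$) produces $(m z_1, m z_2) \in W_1 \circ W_1 \circ W_1 \subset W_0$. As $m$ was arbitrary, $(z_1, z_2) \in \bigcap_{m \in M} m^{-1}(W_0) = \Delta_X$, so $z_1 = z_2$.

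The genuinely delicate point is the second paragraph, and I expect the main obstacle to lie there: one must split $W_0$ into a triple composite $W_1 \circ W_1 \circ W_1$ and arrange $T' \circ T' \subset T$ so that the two applications of uniform continuity of $f$ and the single application of uniform continuity of $(f|_Y)^{-1}$ interlock correctly. Closely related is the careful translation of the phrase \emph{uniform embedding} into the usable implication $(f(y_1), f(y_2)) \in T \Rightarrow (y_1, y_2) \in W_1$ via the uniform structures induced on $Y$ and $f(Y)$. Once the entourage bookkeeping is set up correctly, the equivariance of $f$ and the expansivity of the action do the rest with no further effort.
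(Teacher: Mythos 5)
Your proposal is correct and follows essentially the same argument as the paper's proof: the same chain of entourages (your $W_1$, $T$, $T'$ are the paper's $W$, $T$, $U$), the same use of equivariance to propagate $f(z_1)=f(z_2)$ along the orbit, and the same three-fold composition $W_1\circ W_1\circ W_1\subset W_0$ combined with expansivity to conclude. The only cosmetic difference is that you ask directly for a symmetric $V\subset W_1$ with the uniform-continuity property, whereas the paper takes $V=W\cap E$ and invokes the symmetry of $W$ instead; both work.
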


(We recall that the notation $Z \subset V[Y]$ means that for each $z \in Z$ there exists $y \in Y$ such that $(z,y) \in V$.)

\begin{proof}
By expansivity of the action of $M$, there is an entourage $W_0$ 
of $X$ such that
\begin{equation}  
\label{e:expansive2}
\bigcap_{m \in M} m^{-1}(W_0) = \Delta_X.
\end{equation}
It follows from the axioms of a uniform structure that we can find a
symmetric entourage $W$ of $X$ such that
\begin{equation} 
\label{e:W-W0}
W \circ W \circ W \subset W_0.
\end{equation}
Since the  restriction of $f$ to $Y$ is a uniform
embedding, we can find an entourage $T$ of $X$ such that
\begin{equation} 
\label{e:T-W}
(f(y_1),f(y_2)) \in T \Rightarrow (y_1,y_2) \in W
\end{equation}
for all $y_1,y_2 \in Y$.
Let $U$ be a symmetric entourage of $X$ such that
\begin{equation} \label{e:U-T}
U \circ U \subset T.
\end{equation}
Since $f$ is uniformly continuous, we can find an entourage $E$ of
$X$ such that
\begin{equation} 
\label{e:E-U}
(x_1,x_2) \in E \Rightarrow (f(x_1),f(x_2)) \in U
\end{equation}
for all $x_1,x_2 \in X$.
\par
Let us show that the entourage $V = W \cap E$ has the required property. So let $Z$ be a 
$M$-invariant subset of $X$  such that 
$Z \subset V[Y]$ and let us show that the restriction of $f$ to 
$Z$ is injective.
\par
Let $z'$ and $z''$ be points in $Z$ such that $f(z') = f(z'')$.
Since $f$ is $M$-equivariant, we have
\begin{equation}
\label{e;z',zz''}
f(m z') = f(m z'')
\end{equation} 
for all $m \in M$. 
As the points $m z'$ and $m z''$ stay in $Z$, the fact that $Z \subset V[Y]$ implies that there are points $y_m'$ and $y_m''$ in $Y$ such that $(m z',y_m') \in V$
and $(m z'',y_m'') \in V$.
Since $V \subset E$, it follows from \eqref{e:E-U} that
$(f(m z'),f(y_m'))$ and $(f(m z''),f(y_m''))$ are both
in $U$. As $U$ is symmetric, we also have $(f(y'_m), f(m z')) \in U$. We deduce that
$(f(y_m'),f(y_m'')) \in U \circ U \subset T$
by using \eqref{e:U-T} and \eqref{e;z',zz''}. 
This implies $(y_m',y_m'') \in W$ by \eqref{e:T-W}. On the other hand, we also have 
$(m z', y_m') \in W$ and $(y_m'', m z'') \in W$ since $V \subset W$ and $W$ is symmetric. 
It follows that
$$
(m z', m z'')  \in W \circ W \circ W \subset W_0
$$
by \eqref{e:W-W0}.
This gives us
$$
(z',z'') \in \bigcap_{m \in M} m^{-1}(W_0),
$$
and hence $z' = z''$ by \eqref{e:expansive2}. Thus the
restriction of $f$ to $Z$ is injective.
\end{proof}

\subsection{Closedness of marked surjunctive monoids}
\label{subsec:closed-surjun}
\begin{theorem}
\label{t:surjunct-closed}
Let $M$ be a monoid. 
Then the set of congruence relations $\gamma$ on $M$ such that the quotient monoid $M/\gamma$ is surjunctive is closed in $\CC(M)$.  
\end{theorem}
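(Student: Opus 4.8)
The plan is to prove closedness directly by nets, since $\CC(M)$ is compact (Proposition~\ref{p:properties-space-n-gamma}) but possibly non-metrizable. So I would let $(\gamma_i)_{i\in I}$ be a net of congruence relations with $M/\gamma_i$ surjunctive for all $i$ and $\gamma_i \to \gamma$ in $\CC(M)$, and show that $M/\gamma$ is surjunctive. Fixing a finite alphabet $A$ (we may assume $|A|\ge 2$, the case $|A|\le 1$ being trivial) and an injective cellular automaton $\bar\tau$ over $M/\gamma$, the goal is that $\bar\tau$ be surjective. Using that $\Phi_\gamma \colon \CA(M;A)\to\CA(M/\gamma;A)$ is an epimorphism (Proposition~\ref{p:phi-gamma-epi}), I would lift $\bar\tau$ to a cellular automaton $\tau$ over $M$ with $\Phi_\gamma(\tau)=\bar\tau$, fixing a memory set $S$ and local rule $\mu$. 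Through the equivariant uniform isomorphisms $\pi_\delta^* \colon A^{M/\delta}\to\Inv(\delta)$ (Proposition~\ref{p:inv-sim-A-quotient-conf}), the automaton $\Phi_\delta(\tau)$ over each quotient $M/\delta$ is conjugate to the restriction $\tau\vert_{\Inv(\delta)}$, so injectivity and surjectivity of $\Phi_\delta(\tau)$ translate exactly into injectivity and surjectivity of $\tau$ restricted to $\Inv(\delta)\subset A^M$.

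The core of the argument is a Gromov-injectivity transfer. Since $A$ is finite, $A^M$ is compact, $\Inv(\gamma)$ is closed (Proposition~\ref{p:inv-gamma-closed}) hence compact, and $\tau\vert_{\Inv(\gamma)}$ is injective (because $\bar\tau$ is), so by Proposition~\ref{p:inj-hom-unif-emb} it is a uniform embedding. The shift action of $M$ on $A^M$ is uniformly continuous and expansive (Proposition~\ref{p;UCE}) and $\tau$ is uniformly continuous and $M$-equivariant, so I would apply Theorem~\ref{t;gil} with $Y=\Inv(\gamma)$ to obtain an entourage $V$ of $A^M$ such that every $M$-invariant $Z\subset V[\Inv(\gamma)]$ has $\tau\vert_Z$ injective. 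Next I bring in the topology of $\CC(M)$: the map $\Psi(\delta)=\Inv(\delta)$ is uniformly continuous into $\PP(A^M)$ for the Hausdorff--Bourbaki structure (Theorem~\ref{t:psi-uc-n-p}), so $\gamma_i\to\gamma$ forces $\Inv(\gamma_i)\to\Inv(\gamma)$; testing against the entourage $\widehat V$ yields an index $i_0$ with $\Inv(\gamma_i)\subset V[\Inv(\gamma)]$ for all $i\ge i_0$. Since each $\Inv(\gamma_i)$ is $M$-invariant (Proposition~\ref{p:E-periodic-0}), Theorem~\ref{t;gil} makes $\tau\vert_{\Inv(\gamma_i)}$, and hence $\Phi_{\gamma_i}(\tau)$, injective for $i\ge i_0$.

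Surjunctivity of each $M/\gamma_i$ then makes $\Phi_{\gamma_i}(\tau)$ surjective, that is, $\tau(\Inv(\gamma_i))=\Inv(\gamma_i)$ for $i\ge i_0$. To pass to the limit I would invoke that the image map $\tau_* \colon \PP(A^M)\to\PP(A^M)$ is uniformly continuous for the Hausdorff--Bourbaki structure (Proposition~\ref{p:uc-implies-gh-uc}); combined with $\Inv(\gamma_i)\to\Inv(\gamma)$ this gives $\tau(\Inv(\gamma_i))\to\tau(\Inv(\gamma))$. But for $i\ge i_0$ one has $\tau(\Inv(\gamma_i))=\Inv(\gamma_i)\to\Inv(\gamma)$, so the tail net $(\tau(\Inv(\gamma_i)))_{i\ge i_0}$ converges to both the set $\tau(\Inv(\gamma))$, which is the continuous image of a compact set in the Hausdorff space $A^M$ and hence closed, and the closed set $\Inv(\gamma)$. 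Proposition~\ref{p:grom-haus-sep-closed} forces $\tau(\Inv(\gamma))=\Inv(\gamma)$, i.e.\ $\tau\vert_{\Inv(\gamma)}$ is surjective, i.e.\ $\bar\tau$ is surjective, as required.

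I expect the delicate step to be the Gromov-injectivity transfer: extracting a single entourage $V$, uniform over all of $\Inv(\gamma)$, that certifies injectivity of $\tau$ on every sufficiently close $M$-invariant set, and then matching the resulting neighborhood $V[\Inv(\gamma)]$ with the Hausdorff--Bourbaki convergence $\Inv(\gamma_i)\to\Inv(\gamma)$ coming from $\gamma_i\to\gamma$. The remaining work is essentially bookkeeping: verifying the two compatible conjugations by $\pi_\delta^*$ at $\gamma$ and at each $\gamma_i$, and ensuring the final limit comparison is between genuinely closed subsets so that Proposition~\ref{p:grom-haus-sep-closed} applies.
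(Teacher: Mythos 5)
Your proposal is correct and follows essentially the same route as the paper's own proof: lift the injective cellular automaton through $\Phi_\gamma$, apply Gromov's injectivity lemma (Theorem~\ref{t;gil}) with $Y=\Inv(\gamma)$, use the Hausdorff--Bourbaki convergence $\Inv(\gamma_i)\to\Inv(\gamma)$ from Theorem~\ref{t:psi-uc-n-p} to get injectivity on the $\Inv(\gamma_i)$, invoke surjunctivity of the quotients $M/\gamma_i$, and conclude via Propositions~\ref{p:uc-implies-gh-uc} and~\ref{p:grom-haus-sep-closed}. All the supporting citations and the handling of the conjugations by $\pi_\delta^*$ match the paper's argument, so no changes are needed.
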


\begin{proof}
Let $\gamma \in \CC(M)$ and let $(\gamma_i)_{i \in I}$ be a net in $\CC(M)$ converging to $\gamma$.
Suppose that the monoids $M/\gamma_i$ are surjunctive for all $i \in I$. Let us show that the monoid $M/\gamma$ is also surjunctive.
\par
Let $A$ be a finite set and let $\tau \colon A^{M/\gamma} \to A^{M/\gamma}$ be an injective cellular automaton over the monoid $M/\gamma$ and the alphabet $A$.
By Proposition~\ref{p:phi-gamma-epi}, 
there exists  a cellular automaton
$\widetilde{\tau} \colon A^M \to A^M$ over $M$ 
such that $\tau = \Phi_\gamma(\widetilde{\tau})$.
\par
To simplify notation, let us set $X := A^M$, $f := \widetilde{\tau}$, $Y = \Inv(\gamma)$ and $Z_i = \Inv(\gamma_i)$.
 We claim that the hypotheses of Theorem~\ref{t;gil} are satisfied by $X$, $f$,  $Y$, and the $M$-shift on $X$.
Indeed, we first observe that the action of $M$ on $X$ is uniformly continuous and expansive by Proposition~\ref{p;UCE}.
On the other hand, the map $f \colon X \to X$ is uniformly continuous and 
$M$-equivariant by definition of a cellular automaton.
Moreover, the restriction of $f$ to $Y$ is injective since this restriction is conjugate to $\tau$
by definition of $\Phi_\gamma$.
Finally, we observe that $X$ is Hausdorff and that $Y$ is  closed  in $X$
(by Proposition~\ref{p:inv-gamma-closed})
 and hence compact.
Therefore, it follows from Proposition~\ref{p:inj-hom-unif-emb} that the restriction of  $f$ to $Y$ is a uniform embedding. 
By applying Theorem~\ref{t;gil}, we deduce that there exists an entourage $V$ of $X$ such that if $Z$ is an $M$-invariant subset of $X$ with $Z \subset V[Y]$, then the restriction of $f$ to $Z$ is injective.
\par
Since the net $(Z_i)_{i \in I}$ converges to $Y$ for the Hausdorff-Bourbaki topology on $\PP(X)$ by Theorem \ref{t:psi-uc-n-p}, there is an element $i_0 \in I$ such that $Z_i \subset V[Y]$ for all $i \geq i_0$.
As the sets $Z_i$ are $M$-invariant by Proposition~\ref{p:E-periodic-0}, it follows that the restriction of $f$ to $Z_i$ is injective for all $i \geq i_0$.
On the other hand,   $f(Z_i) \subset Z_i$ and the restriction of $f$ to $Z_i$ is conjugate to a cellular automaton $\tau_i \colon A^{M/\gamma_i} \to A^{M/\gamma_i}$ over the monoid $M /\gamma_i$ for all $i \in I$ by Proposition \ref{p:quotient-automaton}.
As the monoids $M/\gamma_i$ are surjunctive by our hypotheses, we deduce that
$f(Z_i) = Z_i$ for all $i \geq i_0$. 
Now, it follows from Proposition \ref{p:uc-implies-gh-uc} that the net $(f(Z_i))_{i \in I}$ converges to $f(Y)$ in $\PP(X)$.
Thus, the net $(Z_i)_{i \in I}$ converges to both $Y$ and $f(Y)$. As $Y$ and $f(Y)$ are closed in $X$ (by compactness of $Y$), we deduce that $Y = f(Y)$ 
by applying Proposition~\ref{p:grom-haus-sep-closed}.
This shows that $\tau$ is surjective since $\tau$ is conjugate to the restriction of $f$ to $Y$. 
Consequently, the monoid $M/\gamma$ is surjunctive.
\end{proof}

By combining Theorem~\ref{t:surjunct-closed}, Proposition~\ref{p:charcterisation-residual-C}, 
and Proposition~\ref{p:finite-surj}, we recover the fact that all residually finite monoids are surjunctive (Theorem \ref{t:res-finite-surj}). 
Similarly, from Theorem~\ref{t:surjunct-closed}, Proposition~\ref{p:charcterisation-residual-C},
and Corollary~\ref{c:canc-commutative-monoid-surj}, 
we  get the following result.

\begin{corollary}
Let $\PP$ be the property for monoids of being both cancellative and commutative.
Then every residually $\PP$ monoid is surjunctive.
\end{corollary}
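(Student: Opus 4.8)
The plan is to deduce this corollary exactly as the surjunctivity of residually finite monoids was recovered in the paragraph immediately preceding it, simply replacing the property ``finite'' by the property $\PP$ of being both cancellative and commutative. The three ingredients I would assemble are Proposition~\ref{p:charcterisation-residual-C} (the net characterization of residual $\PP$), Corollary~\ref{c:canc-commutative-monoid-surj} (cancellative commutative monoids are surjunctive), and Theorem~\ref{t:surjunct-closed} (closedness of the set of surjunctive marked quotients).

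First I would check that $\PP$ satisfies the hypotheses needed to invoke Proposition~\ref{p:charcterisation-residual-C}, namely that $\PP$ is closed under finite direct products and under taking submonoids. Both are immediate: a finite direct product of cancellative commutative monoids is again cancellative and commutative, and any submonoid of a cancellative commutative monoid inherits both properties. (This is precisely the parenthetical example recorded in the statement of Proposition~\ref{p:charcterisation-residual-C}, so no new verification is really required.)

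Now let $M$ be a residually $\PP$ monoid. By Proposition~\ref{p:charcterisation-residual-C}, there is a net $(\gamma_i)_{i \in I}$ in $\CC(M)$ converging to the diagonal $\Delta_M$ such that each quotient $M/\gamma_i$ is cancellative and commutative. By Corollary~\ref{c:canc-commutative-monoid-surj}, each such $M/\gamma_i$ is therefore surjunctive; that is, every $\gamma_i$ lies in the set
\[
\Sigma := \{\gamma \in \CC(M) : M/\gamma \text{ is surjunctive}\}.
\]
By Theorem~\ref{t:surjunct-closed}, the set $\Sigma$ is closed in $\CC(M)$. Since the net $(\gamma_i)_{i \in I}$ lies entirely in $\Sigma$ and converges to $\Delta_M$, closedness gives $\Delta_M \in \Sigma$, i.e. $M/\Delta_M \cong M$ is surjunctive, as desired.

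I expect no genuine obstacle at this final step, since all of the difficulty has already been absorbed into the machinery it rests on: the substance lies in Theorem~\ref{t:surjunct-closed}, whose proof in turn relies on Gromov's injectivity lemma (Theorem~\ref{t;gil}), the uniform embedding $\gamma \mapsto \Inv(\gamma)$ of Theorem~\ref{t:psi-uc-n-p}, and the epimorphism $\Phi_\gamma$ onto $\CA(M/\gamma;A)$. The only point demanding any care is the closure of $\PP$ under finite products and submonoids, which is what licenses the passage from residual $\PP$ to a convergent net of $\PP$-quotients via Proposition~\ref{p:charcterisation-residual-C}; this is entirely routine.
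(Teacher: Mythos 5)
Your proof is correct and is exactly the argument the paper intends: the paper's own proof consists precisely of citing Theorem~\ref{t:surjunct-closed}, Proposition~\ref{p:charcterisation-residual-C}, and Corollary~\ref{c:canc-commutative-monoid-surj}, and you have simply spelled out the routine details (closure of $\PP$ under finite products and submonoids, and passing to the limit $\Delta_M$ of the net inside the closed set of surjunctive congruences).
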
 

\section{Some open problems}

We have  been unable to answer the following questions.

\begin{enumerate}[(Q1)]
\item
Is every commutative monoid surjunctive?
\item
Is every locally finite monoid surjunctive?
\item
Is every locally surjunctive monoid surjunctive?
\item
Is the opposite monoid of every surjunctive monoid surjunctive? 
\item
Is every cancellative monoid surjunctive?
\item
Is every monoid that is sofic in the sense of \cite{semisofic} surjunctive?
\item
Does every non-surjunctive monoid  contain a submonoid isomorphic to the bicyclic monoid?
\end{enumerate}

Note that an affirmative answer to (Q3) would imply an affirmative answer to (Q1) and (Q2), and that an affirmative answer to
(Q7) would imply an affirmative answer to (Q4), (Q5), and (Q6).
The answers to (Q1), (Q2), (Q3),  (Q4), and (Q6)  are known to be  affirmative for groups (see e.g.~\cite{livre}).
 Of course, an affirmative answer to (Q5) would  imply the Gottschalk conjecture (every group is surjunctive).  
 
 \def\cprime{$'$} \def\cprime{$'$} \def\cprime{$'$}


\end{document}